\documentclass[11pt]{amsart}
\usepackage{amssymb}
\usepackage{wasysym}
\usepackage{tikz}
\usepackage{enumitem}
\usepackage{fontawesome}
\usepackage{amsmath}
\usepackage{xr}

\newenvironment{Exercise}%
{\smallskip \noindent {\sc An exercise?}}%
{\bigskip}
\newenvironment{Convention}%
{\smallskip \noindent {\sc A convention.}}%
{\bigskip}

\usepackage{etoolbox}

\newenvironment{Claim}%
{\smallskip \noindent {\sc Claim.}}%
{\bigskip}

\usepackage[hidelinks]{hyperref}

\newfont{\wcyr}{wncyr10 at 10pt}
\newfont{\wcyi}{wncyi10 at 10pt}
\newfont{\swcyi}{wncyi10 at 8pt}
\newfont{\wcyb}{wncyb10 at 10pt}

\newenvironment{Corollary}%
   {\smallskip \noindent {\sc Corollary.}}%
   {\bigskip}
\newenvironment{Definition}%
   {\smallskip \noindent {\sc Definition.}}%
   {}
   {\smallskip \noindent {\sc Example.}}%
   {\bigskip}
   {\smallskip \noindent {\sc Examples.}}%
   {\bigskip}
\newenvironment{Proof}%
   {\noindent {\sc Proof:}}%
   {\hfill\raisebox{-2pt}{\faThumbsOUp} \bigskip}
\newenvironment{Proposition}%
   {\smallskip \noindent {\sc Proposition.}}%
   {\bigskip}
\newenvironment{Theorem}%
   {\smallskip \noindent {\sc Theorem.}}%
   {\bigskip}
\newenvironment{Lemma}%
   {\smallskip \noindent {\sc Lemma.}}%
   {\bigskip}
\newenvironment{Remark}%
   {\smallskip \noindent {\sc Remark.}}%
   {\bigskip}
   {\smallskip \noindent {\sc Remarks.}}%
   {\bigskip}
   {\smallskip \noindent {\sc Question.}}%
   {\bigskip}
   {\smallskip \noindent {\sc Questions.}}%
   {\bigskip}
   {\smallskip \noindent {\sc Conjecture.}}%
   {\bigskip}

\newcommand{\cB}{{\mathcal B}}
\newcommand{\bbC}{\mathbb{C}}

\newcommand{\bbD}{\mathbb{D}}

\newcommand{\cH}{{\mathcal H}}

\newcommand{\N}{\mathbb{N}}
\newcommand{\Q}{\mathbb{Q}}
\newcommand{\R}{\mathbb{R}}

\newcommand{\cO}{{\mathcal O}}
\newcommand{\cP}{{\mathcal P}}

\newcommand{\cU}{{\mathcal U}}

\DeclareMathOperator{\diam}{diam}

\DeclareMathOperator{\Span}{span}

\newcommand{\cstar}{$\mathrm{C}^*$}
\newcommand{\cst}{\mathrm{C}^*}

\newcommand{\newpar}{\paragraph}

\newcommand{\bbN}{\N}

\newcommand{\bbQ}{\Q}
\newcommand{\bbR}{\mathbb R}

\DeclareMathOperator{\Sep}{Sep}
\DeclareMathOperator{\ex}{ex}
\DeclareMathOperator{\ap}{ap}

\newcommand{\sfT}{\mathsf T}
\newcommand{\sfS}{\mathsf S}
\newcommand{\sfP}{\mathsf P}
\newcommand{\sfA}{\mathsf A}
\newcommand{\sfB}{\mathsf B}

\newcommand{\sfX}{\mathsf X}
\newcommand{\sfY}{\mathsf Y}
\newcommand{\sfZ}{\mathsf Z}

\newcommand{\bfB}{\mathbf B}

\newcommand{\bbNN}{{\bbN}^{\bbN}}
\newcommand{\bbNlN}{{\bbN}^{<\bbN}}
\newcommand{\rs}{\restriction}

\DeclareMathOperator{\jsigma}{j\sigma}

\newcommand{\bSigma}{\boldsymbol\Sigma}
\newcommand{\bPi}{\boldsymbol\Pi}

\newenvironment{ProofOf}[1]%
{\noindent {\sc #1:}}%
{\hfill\raisebox{-2pt}{\faThumbsOUp} \bigskip}

{\smallskip \noindent {\sc Problem.}}%
{\bigskip}

\newcommand{\OCAT}{{OCA$_T$}}
\newcommand{\MA}{{MA}}

\newcommand{\CH}{{CH}}
\newcommand{\ZFC}{{ZFC}}
\newcommand{\ZF}{{ZF}}
\newcommand{\ZFCstar}{{ZFC*}}
\newcommand{\DC}{{DC}}

\title[Set-theoretic absoluteness for analysts]{Set-theoretic absoluteness for analysts and separable \cstar-algebras without Choice}
\author{Bruce Blackadar, Ilijas Farah} 

\subjclass{46L05, 03E25}	
\keywords{\cstar-algebras,  Axiom of Choice, representations of \cstar-algebras, absoluteness, axiomatizability.}

\thanks{I.F. is partially supported by NSERC}
\thanks{ChatGPT was used to `referee' the final version of this paper}

\address[B.B]{Department of Mathematics and Statistics,
University of Nevada, Reno\\
Reno, NV, USA  89557}
\email{bruceb@unr.edu}

\address[I.F]{Department of Mathematics and Statistics,
	York University,
	4700 Keele Street,
	Toronto, Ontario, Canada, M3J
	1P3\\
Matemati\v cki Institut SANU\\
	Kneza Mihaila 36\\
	11\,000 Beograd, p.p. 367\\
	Serbia}
\email{ifarah@yorku.ca}

\date{\today}

\begin{document}

\begin{abstract}
Basic theory of separable \cstar-algebras can be developed without the Axiom of Choice, and it does not depend on the Continuum Hypothesis, Martin's Axiom, and other standard set-theoretic assumptions. This can be proved in two ways. First, by showing that the standard proofs do not require Choice. Second, by utilizing set-theoretic absoluteness theorems. 
We provide an introduction to projective complexity and absoluteness for analysts. We also give some limiting examples consistent with \ZF, such as a commutative \cstar-algebra concretely represented  on a Hilbert space but not isomorphic to $C(X)$ for any compact Hausdorff space $X$ and whose state space is not compact and has no extreme points. 
\end{abstract}

\maketitle
		The goal of this paper is twofold. In addition to the results stated in the next paragraph, we present some classical results on absoluteness relevant to functional analysis  that are well known to logicians  but not nearly as well advertised as they should be. 

We show that the theory of separable \cstar-algebras can be developed in \ZF{} (that is, without using any Choice). This includes proving the Gelfand--Naimark representation theorems  as well as the Spectral Mapping  Theorem for polynomials and developing continuous functional calculus for commuting normal elements. 
Some of our proofs are modifications of the standard ones, obtained by avoiding the use of Choice. Some other proofs require new ideas in order to avoid the use of Choice. Yet another batch of proofs proceeds by using the set-theoretic Shoenfield Absoluteness Theorem. This result (well known to logicians but regrettably not as well advertised as it deserves) implies that statements about standard Borel spaces of low quantifier complexity (to be precise, $\bSigma^1_2$ statements; see \S\ref{S.Logical}) that are provable in \ZFC, or even \ZFC{} together with the Continuum Hypothesis (or Martin’s Axiom, \OCAT,\dots) are provable in ZF. 
One of the main objectives of this paper is to present these results in a convenient form that can be utilized by analysts not familiar with set theory. 

On a slightly different note, we also show that in the absence of Choice (more precisely, assuming the existence of a Russell set) there is a concretely representable unital commutative \cstar-algebra that is not isomorphic to $C(X)$ for any compact Hausdorff space $X$. Finally, from the model-theoretic point of view, while the property of having a tracial state is provably axiomatizable in \ZFC, it is not provably axiomatizable in \ \ZF+\DC.

\tableofcontents

\section{\cstar-Algebras}

\newpar{}
We take the usual definition of a \cstar-algebra \cite[II.1.1.1]{BlackadarOperator}, with just the caution that ``complete'' means ``$\sigma$-complete.''  As in \cite[Definition~1.0.1]{blackadar2023hilbert}, we say that a metric space
  $(X,\rho)$ is {\em $\sigma$-complete} if, whenever $(A_n)$ is a decreasing sequence of
nonempty closed  subsets of $X$ with $\diam(A_n)\to0$, then $\bigcap_n A_n$ is nonempty (it is then necessarily a singleton). In \cite{BrunnerSB} spaces with this property were called \emph{Cantor complete}.  For separable metric spaces $\sigma$-completeness is equivalent to 
usual Cauchy completeness \cite[XII.15.9.7]{BlackadarReal} (so ``complete'' is unambiguous in this case).

\newpar{}
The two standard examples of \cstar-algebras are:
\begin{enumerate}
	\item[(i)] The algebra  $\cB(\cH)$ of all bounded linear operators on a Hilbert space $\cH$, or more generally any norm-closed *-subalgebra thereof.
	(The fact that $\cB(\cH)$ is $\sigma$-complete and has an adjoint operation making it a \cstar-algebra requires a little proof, 	see  \cite[7.0.1]{blackadar2023hilbert}, \cite[7.0.3]{blackadar2023hilbert}.)
	\item[(ii)]  $C_0(X)$, the complex-valued continuous functions vanishing at infinity on a locally
	compact Hausdorff space $X$ (with complex conjugation as adjoint).
\end{enumerate}
The two Gelfand--Naimark theorems, two of the most basic facts in the subject, assert that these
examples are universal in the categories of all \cstar-algebras and all commutative \cstar-algebras, respectively.  However, without some Choice these results can fail, and thus we will
have to carefully reestablish all basic properties of \cstar-algebras.

\section{Banach Space Prerequisites}
We define a Banach space to be a normed space that is $\sigma$-complete in the norm metric
(Cauchy completeness suffices in the separable case). 
The following result can be proved in ZF; the result is probably well known, but we state it for future reference. 

\newpar{}\begin{Proposition}\label{P.HBetc}
	Suppose that $X$ is a separable Banach space (real or complex). Then the following results hold.

	\begin{enumerate}
		\item \label{1.BS.HBe} Hahn--Banach extension theorem: For every subspace $Y$ of $X$ and a bounded linear functional $\varphi$ on $Y$, there is a linear functional $\psi$ on $X$ that extends $\varphi$ and has the same norm as $\varphi$. 
		\item \label{2.BS.BA} Banach--Alaoglu theorem: The closed unit ball $\bfB(X^*)$ of the dual space $X^*$ is weak*-compact. 
		\item \label{3.BS.w*} The weak* topology on $\bfB(X^*)$ is compact, metrizable, and separable. 
		\item \label{4.BS.KM} Krein--Milman theorem: If $K$ is a weak*-compact and convex subset of $X^*$, then $K$ is the closed convex hull of its extreme boundary, $\partial K$. 
		\item \label{5.BS.HBs} Hahn--Banach separation theorem: If $A$ and $B$ are disjoint convex subsets of $X$ and $A$ is open, then there is $\varphi\in X^*$ such that $Re(\varphi(x))<\inf_{y\in B} Re(\varphi(y))$ for all $x\in A$.
		\item \label{6.BS.OM}  The Open Mapping Theorem, Closed Graph Theorem, and
		Uniform Boundedness Theorem hold for maps with domain $X$ (and separable codomain for the Closed Graph Theorem). 
	\end{enumerate}
\end{Proposition}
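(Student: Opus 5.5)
The common thread is that separability lets us replace every appeal to Zorn's lemma or Tychonoff by a countable recursion, using a fixed enumeration of a countable dense set; no choices are made beyond that one enumeration. Fix a countable dense set $\{x_n\}$ in $X$. Enumerating it is a single choice of one object, not a choice function, since $X$ is given as separable with a witness.

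For (\ref{1.BS.HBe}) we proceed in three steps.
\begin{enumerate}
\item First extend $\varphi$ by uniform continuity to the closure $\bar Y$.
\item Next run the classical one-step real extension along $x_1,x_2,\dots$. At stage $n$, given $\psi_n$ on $Y_n=\Span(\bar Y\cup\{x_1,\dots,x_n\})$ with norm $\|\varphi\|$, the admissible values at $x_{n+1}$ form a nonempty closed interval $[a,b]$. We choose canonically, for instance the left endpoint $a$, so no choice is needed.
\item The union gives a functional on a dense subspace with norm $\|\varphi\|$, which extends uniquely to $X$.
\end{enumerate}
The complex case follows by the usual $\varphi(x)=\mathrm{Re}\,\varphi(x)-i\,\mathrm{Re}\,\varphi(ix)$ trick.

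For (\ref{2.BS.BA}) and (\ref{3.BS.w*}), the map $\psi\mapsto(\psi(x_n))_n$ embeds $\bfB(X^*)$ into $\prod_n \bbD_{\|x_n\|}$, a countable product of compact metric spaces. This product is compact in ZF, since sequential compactness holds by a diagonal argument and total boundedness holds. On $\bfB(X^*)$ the weak* topology agrees with the product topology, because the functionals are uniformly Lipschitz. The image is closed: any limit is a linear, norm-$\le1$ function on the linear combinations of the $x_n$, so it extends uniquely. Hence $\bfB(X^*)$ is a compact metric space, with metric $\sum 2^{-n}|\psi(x_n)-\psi'(x_n)|/(1+\|x_n\|)$. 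For separability, note that every compact metric space is separable in ZF: for each $k$, take the finite $1/k$-nets in the lexicographically least way via a countable base. It is cleaner to get the nets from the product's canonical countable dense set of rational-coordinate points, approximating by points of the image.

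For (\ref{5.BS.HBs}), use the Minkowski functional of $A-B+x_0$ and apply (\ref{1.BS.HBe}) in its sublinear form. The step-by-step extension in (\ref{1.BS.HBe}) works verbatim for a sublinear dominating functional $p$, continuous since $A$ is open. Hence I would actually prove (\ref{1.BS.HBe}) in the sublinear-domination version from the start.

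For (\ref{4.BS.KM}), $K$ is compact metrizable by (\ref{3.BS.w*}), so the standard proof via Zorn on closed faces is replaced as follows. Take a countable dense set $\{x_n\}$. Given a compact convex $K$, define
\[
K_1=\{\psi\in K:\mathrm{Re}\,\psi(x_1)\text{ is maximal}\},\qquad K_{n+1}=\text{the analogous face of }K_n\text{ for }x_{n+1}.
\]
Each $K_n$ is a nonempty compact face. Their intersection is a nonempty face on which all $\mathrm{Re}\,\psi(x_n)$ are constant, so by density it is a singleton, i.e.\ an extreme point. Applying this to all functionals shows every closed face contains an extreme point. If $\overline{\mathrm{conv}}(\partial K)\ne K$, separation in the weak* topology (the dual of $(X^*,w^*)$ is $X$, and the finite-dimensional separation argument applies) gives $x$ and a face of $K$ disjoint from the closed convex hull, which contradicts the face containing an extreme point. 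This is the main obstacle: one must check that weak* separation does not secretly use choice. I would reduce it to (\ref{5.BS.HBs}) applied in a suitable way, or argue directly with compactness and the metric.

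For (\ref{6.BS.OM}), the Baire category theorem holds in ZF for separable complete metric spaces. The nested balls can be chosen canonically with centers from the dense set and rational radii, and the intersection is nonempty by $\sigma$-completeness. The standard proofs of the Open Mapping, Closed Graph, and Uniform Boundedness Theorems then go through. Where they pick points, pick the least-index element of the dense set satisfying an open condition. Separability of the codomain is needed for Closed Graph, since the graph is then a separable Banach space to which Open Mapping applies.
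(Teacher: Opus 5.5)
Your (1), (2) and (6) follow the paper's route: the left-endpoint recursion along the dense sequence, the evaluation embedding into a countable product of disks, and Baire category with centres drawn from the dense sequence.

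The step that fails as written is separability in (3). It is not a theorem of ZF that compact metric spaces are separable. Let $R=\bigsqcup_n X_n$ be a Russell set, put $h=1/n$ on $X_n$ and $h(\infty)=0$, and let $d(x,y)=\max(h(x),h(y))$ for $x\neq y$ in $R\cup\{\infty\}$. This ultrametric space is compact. Each point of $R$ is isolated, so it lies in every dense subset, and $R$ is not countable. Hence the space is neither separable nor second countable. The subspace $R$ is also sequentially compact and totally bounded but not compact. So the principle you use for compactness of the product fails in ZF in general too. For the product it can be rescued with its canonical finite nets, but it is simpler to cite the choice-free compactness of $[0,1]^{\bbN}$, as the paper does.

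Your countable-base recipe selects finite subcovers, not points in them. Your `cleaner' variant still needs a rule for picking a point of the image near each rational point. You can supply one explicitly. For each closed rational box $\bar V$ meeting the compact image $K$ of $\bfB(X^*)$, take the lexicographically least point of $K\cap\bar V$, minimizing real and then imaginary parts coordinate by coordinate. Each minimization is over a nonempty compact set, and compactness puts the resulting point in $K\cap\bar V$. Alternatively, take the canonical extensions given by (1) of rational functionals of norm $<1$ on $\Span\{x_1,\dots,x_n\}$.

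For (4) and (5) you take a genuinely different route. The paper gives no direct argument there. It shows that `$\partial K\neq\emptyset$' is $\bSigma^1_1$ and that the existence of a separating functional is $\bSigma^1_2$, then transfers the ZFC theorems to ZF by absoluteness (Theorems~\ref{T.Analytic}, \ref{T.Shoenfield}, \ref{T.Provability}). It explicitly says it knows no ordinary choice-free proofs. Yours are such proofs, and they work.

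For (5), the Minkowski functional of the open convex set $A-B+x_0$ (with $x_0\in B-A$) is sublinear and continuous. The left-endpoint recursion runs verbatim under sublinear domination. Openness of $A$ then upgrades $\Re\varphi(a)\le\Re\varphi(b)$ to the required strict inequality.

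For (4), the nested exposed faces $K_1\supseteq K_2\supseteq\cdots$ give a canonical extreme point of any nonempty weak*-compact convex set, since elements of $X^*$ whose real parts agree on a dense set coincide. The second half, an exposed face missing $\overline{\mathrm{conv}}(\partial K)$, is the same as the paper's.

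The weak* separation you flag is harmless. A weak*-neighbourhood of $\psi_0\notin C$ missing $C$ has the form $\{\psi:|\psi(\xi_i)-\psi_0(\xi_i)|<\epsilon,\ i\le m\}$. So $T\psi=(\psi(\xi_1),\dots,\psi(\xi_m))$ maps $C$ onto a compact convex subset of $\bbC^m$ not containing $T\psi_0$. The nearest-point argument in $\bbR^{2m}$ then yields $\xi\in\Span\{\xi_1,\dots,\xi_m\}$ with $\Re\psi_0(\xi)>\max_{\psi\in C}\Re\psi(\xi)$, and no choice is involved.

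Your route buys explicit, even canonical, witnesses and avoids metamathematics. The paper's is shorter and serves to illustrate the absoluteness method it advocates.
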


\begin{Proof}	Let $x_n$, for $n\in \bbN$, be an enumeration of a dense subset of the unit ball of $X$.

\eqref{1.BS.HBe} It suffices to prove this in the case when $X$ is a real Banach space. The standard proof, using Zorn's Lemma (e.g., \cite[Lemma 2.3.2]{pedersen2012analysis}), can be performed using a countable dense set instead, as follows.
 Then $Y_n=\Span(Y\cup \{x_j\mid j\leq n\})$ defines an increasing sequence of linear subspaces of $X$ with dense union. Let $Y_0=Y$ and $\psi_0=\varphi$.  The standard proof of the Hahn--Banach theorem shows that for every $n$ and linear functional $\psi_n$ on $Y_n$ there is a closed interval $[\alpha,\beta]$ in $\bbR$ such that  for any $\gamma\in [\alpha,\beta]$,  $\psi_{n+1}(y+tx_n)=\psi_n(y)+t\gamma$   extends $\psi_n$  to $Y_{n+1}$ and satisfies $\|\psi_n\|=\|\psi_{n+1}\|$. Use $\gamma=\alpha$ to define $\psi_{n+1}$.  This defines $\psi$ on $\bigcup_nY_n$, and $\psi$ extends
 to $X$ by continuity. 

	\eqref{2.BS.BA}	For the Banach--Alaoglu theorem, note that the evaluation map from $\bfB(X^*)$ into $\bbD^n$ (where $\bbD$ denotes the closed unit disk in $\bbC$) defined by $\varphi\mapsto (\varphi(x_n))$ is, by the standard proof, a homeomorphism onto its range. Since~$\bbD$ is homeomorphic to $[0,1]^2$, and since $[0,1]^\bbN$ is compact by Tychonoff's theorem (no choice needed, cf.\ \cite[XII.11.6.9]{BlackadarReal}), $\bbD^\bbN$ is compact. The range of the evaluation map is closed, hence compact. 
	
	\eqref{3.BS.w*}	The standard metric on $\bfB(X^*)$, 
	\[
	d(\varphi,\psi)=\sum_n 2^{-n}|\varphi(x_n)-\psi(x_n)|
	\]
	is as required, by the standard proof.  $\bfB(X^*)$ is second countable, hence separable by
	\cite[XII.15.9.8]{BlackadarReal}. 
	
	\eqref{4.BS.KM} and 	\eqref{5.BS.HBs}: While we don’t know ordinary proofs of these assertions that do not use the Axiom of Choice, we can 	prove that a proof exists. See  \ref{S.KreinMilman} and  \ref{S.HBseparation}.
		
	\eqref{6.BS.OM}:  The Baire Category Theorem for separable complete metric spaces
	requires no Choice.  The proofs of the Open Mapping Theorem and Uniform Boundedness 
	Theorem (second proof) as written in \cite{BlackadarReal} require no additional choice
	(in the proof in XVIII.8.2.5 the $x_n$ can be systematically chosen from a dense sequence),
	and the Closed Graph Theorem is a simple corollary of the Open Mapping Theorem.
	\end{Proof}

		If $X$ is just a separable normed space, then the completion $\bar X$ is also separable, 
	and $X^*=\bar X^*$, so (1)--(5) (not (6)!) hold for $X$.  But there is a subtlety: if $\bar X$ is
	separable, it is not obvious that $X$ is also separable (even if $X$ is an inner product space).
	In this case, (1)--(5) still hold for $X$.

The proof  of Proposition~\ref{P.HBetc} \eqref{1.BS.HBe} shows the following. 

\newpar{}\begin{Proposition}\label{P.WO.HB}
	Suppose that $X$ is a Banach space with a well-ordered dense subset. Then the Hahn--Banach extension theorem holds for $X$. \qed 
\end{Proposition}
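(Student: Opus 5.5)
We will run the proof of Proposition~\ref{P.HBetc}\eqref{1.BS.HBe} by transfinite recursion along the well-order in place of ordinary recursion along $\bbN$. As there, it suffices to treat real Banach spaces. For a complex $X$ we apply the real case to $\mathrm{Re}\,\varphi$ and recover $\psi(x)=\mathrm{Re}\,\psi(x)-i\,\mathrm{Re}\,\psi(ix)$ as usual, which uses no Choice.

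Let $D=\{x_\xi\mid \xi<\kappa\}$ be the well-ordered dense subset, indexed by an ordinal $\kappa$. For $\xi\le\kappa$ put $Y_\xi=\Span(Y\cup\{x_\eta\mid\eta<\xi\})$, so that $Y_0=Y$, the $Y_\xi$ increase, $Y_\lambda=\bigcup_{\xi<\lambda}Y_\xi$ at limit $\lambda$, and $Y_\kappa$ is dense in $X$. By transfinite recursion we define $\psi_\xi$ on $Y_\xi$ with $\psi_0=\varphi$, $\psi_\eta\subseteq\psi_\xi$ for $\eta<\xi$, and $\|\psi_\xi\|=\|\varphi\|$. Normalise $\|\varphi\|=1$.

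\emph{Successor step.} If $x_\xi\in Y_\xi$, set $\psi_{\xi+1}=\psi_\xi$. Otherwise the standard one-dimensional computation gives
\[
\alpha_\xi=\sup_{y\in Y_\xi}\bigl(\psi_\xi(y)-\|y-x_\xi\|\bigr)\le\inf_{y\in Y_\xi}\bigl(\psi_\xi(y)+\|y+x_\xi\|\bigr)=\beta_\xi .
\]
Define $\psi_{\xi+1}(y+tx_\xi)=\psi_\xi(y)+t\alpha_\xi$. The point is that $\alpha_\xi$ is a \emph{definable} real, a supremum computed from $\psi_\xi$. So the recursion step is a function of the previous stage, and no choice is made.

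\emph{Limit step.} At a limit $\lambda$ set $\psi_\lambda=\bigcup_{\xi<\lambda}\psi_\xi$. This is a well-defined linear functional on $Y_\lambda$ of norm $1$, since each element of $Y_\lambda$ lies in some $Y_\xi$ with $\xi<\lambda$.

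The transfinite recursion theorem holds in \ZF{} because each step is given by an explicit rule. It therefore yields $\psi_\kappa$ on the dense subspace $Y_\kappa$ with $\|\psi_\kappa\|=1$. Since $\psi_\kappa$ is uniformly continuous, it extends uniquely to the closure, which is all of $X$. The extension is given by limits along sequences from the dense set $Y_\kappa$, and the value is independent of the sequence, so again no choice is needed. We must check that limits exist in $\bbR$ (by Cauchy completeness of $\bbR$) and that every point of $X$ is a limit of a sequence from $Y_\kappa$. The latter uses $\sigma$-completeness only mildly: a sequence is not actually needed, since we may set $\psi(x)=\lim_{y\to x,\,y\in Y_\kappa}\psi_\kappa(y)$ as a limit over the net, which is defined by a formula.

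The main obstacle is making sure that no hidden choice sneaks into the successor step. The step requires a canonical choice of a value in $[\alpha_\xi,\beta_\xi]$, and taking $\gamma=\alpha_\xi$ provides one. It also requires the well-order to decide canonically which vector to adjoin next, which the indexing by $\kappa$ provides. Everything else is the standard argument.
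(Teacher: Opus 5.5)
Your proposal is correct and is essentially the paper's argument: the paper only notes that the recursion in the proof of Proposition~\ref{P.HBetc}\eqref{1.BS.HBe} (always take $\gamma=\alpha$, then extend by continuity) goes through verbatim along a well-ordered dense set, which is what your transfinite recursion with unions at limit stages does. One slip to fix: the upper endpoint should be $\beta_\xi=\inf_{y\in Y_\xi}\bigl(\psi_\xi(y)+\|y-x_\xi\|\bigr)$, equivalently $\inf_{y\in Y_\xi}\bigl(\|y+x_\xi\|-\psi_\xi(y)\bigr)$; with $\psi_\xi(y)+\|y+x_\xi\|$ as you wrote it, the displayed inequality $\alpha_\xi\le\beta_\xi$ can fail (for Euclidean $\bbR^2$, $Y=\bbR e_1$, $\psi(te_1)=t$ and $x=e_1+e_2$ it gives $1$ versus $-1$), though this is harmless because only $\alpha_\xi$ enters your construction.
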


Note also the following special case of \cite[XII.15.9.8]{BlackadarReal}:
\newpar{}
\begin{Proposition}\label{ClosedSubspaceProp}
Let $X$ be a separable Banach space.  Then every closed subset (e.g.\ closed subspace) of $X$
is separable.  In particular, every \cstar-subalgebra of a separable \cstar-algebra is separable.
\end{Proposition}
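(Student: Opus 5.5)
The plan is to prove the stronger fact that every subset $F$ of a second countable (equivalently, separable metric) space is separable, without Choice. Applied to a closed subset of $X$ this gives the Proposition. Fix a dense sequence $(x_n)$ in $X$; no choice is needed because $X$ is assumed separable and we fix one enumeration once and for all. The obstacle is that the naive argument picks, for each basic ball meeting $F$, one point of $F$ inside it. That is countably many arbitrary choices, so it uses Countable Choice and must be avoided. The plan is to replace arbitrary choices by canonical ones, using completeness of $X$ and closedness of $F$.

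The canonical choice works as follows. Enumerate the pairs $(n,k)$ with $n,k\in\bbN$. For each pair with $B(x_n,2^{-k})\cap F\neq\emptyset$, build a point $y_{n,k}\in F$ recursively and without choices. Start with $z_0=x_n$. Given $z_j$ in the dense sequence with $d(z_j,F)<2^{-k-j}$, let $z_{j+1}$ be the $x_m$ with \emph{least index} $m$ such that $d(z_j,x_m)<2^{-k-j}$ and $d(x_m,F)<2^{-k-j-1}$. Such an $m$ exists: take $f\in F$ with $d(z_j,f)<2^{-k-j}$, then density gives an $x_m$ close to $f$. Only existence is used, and the least-index rule selects $z_{j+1}$ definably. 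The distance function $d(\cdot,F)$ is defined outright, so no Choice enters. The sequence $(z_j)$ is Cauchy, and $X$ is Cauchy complete, since for separable spaces $\sigma$-completeness is equivalent to Cauchy completeness. Let $y_{n,k}=\lim_j z_j$. Then $d(y_{n,k},F)=0$, and closedness of $F$ gives $y_{n,k}\in F$. Moreover $d(y_{n,k},x_n)\le \sum_j 2^{-k-j}= 2^{1-k}$.

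The set $\{y_{n,k}\}$ is explicitly indexed by a subset of $\bbN^2$, so it is countable with no Choice. It remains to show it is dense in $F$. Given $f\in F$ and $\varepsilon>0$, pick $k$ with $3\cdot 2^{-k}<\varepsilon$ and $x_n$ with $d(x_n,f)<2^{-k}$. The ball $B(x_n,2^{-k})$ then meets $F$, so $y_{n,k}$ is defined, and $d(f,y_{n,k})<2^{-k}+2^{1-k}<\varepsilon$. This is pure existence reasoning in the verification, not in the construction.

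For the last sentence of the Proposition, a \cstar-subalgebra is norm closed, so it is a closed subset of the separable Banach space and the above applies. The main point requiring care is ensuring every selection is by least index, so that the recursion is a definable function in \ZF{}. One also needs to note that the recursion on $j$ is ordinary definition by recursion and requires no Dependent Choice, since each step is uniquely determined.
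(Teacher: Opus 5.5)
Your argument is correct, but the sentence announcing it is not. The ``stronger fact'' that every subset of a second countable (or separable) metric space is separable is not a theorem of \ZF{}. It needs a fragment of countable choice, and the paper itself points out that a set of Cohen reals in Cohen's symmetric model is a nonseparable subset of $\bbR$. For the same reason, second countable metric spaces need not be separable without Choice, so the parenthetical ``equivalently'' is also wrong. Your argument never uses this claim. It needs completeness of $X$ to produce $y_{n,k}$, and closedness of $F$ to pass from $d(y_{n,k},F)=0$ to $y_{n,k}\in F$. For an arbitrary $F$ the same construction only shows that $\overline F$ is separable. So state what you actually prove: in \ZF{}, every closed subset of a complete separable metric space is separable. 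With that correction, the least-index recursion, the estimate $d(y_{n,k},x_n)\le 2^{1-k}$, and the density check are all fine.

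The paper gives no argument of its own. It records the Proposition as a special case of \cite[XII.15.9.8]{BlackadarReal}, the same result it invokes to get separability of $\bfB(X^*)$ from second countability. In effect, that result is the \ZF{} fact that a second countable $\sigma$-complete metric space is separable. That route is intrinsic to $F$: a closed subset of $X$ inherits a countable base and $\sigma$-completeness, and no ambient dense sequence is needed. Your proof instead works in the ambient space, building canonical Cauchy sequences from the fixed dense sequence and the distance function $d(\cdot,F)$. The cited result is more general, since it applies to spaces given only with a countable base, such as the weak$^*$ unit ball. Your proof is self-contained and needs only Cauchy completeness of $X$. It also shows exactly where closedness enters, and why the Cohen-real example does not contradict the Proposition.
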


\subsection{Extensions}

\newpar{}
If $X$ is a Banach space, $Y$ a closed subspace, and both $Y$ and $X/Y$ are separable, is
$X$ necessarily separable?  Without Choice, the answer is probably no in general: a weak
cross-section property is needed (which is automatic under Countable AC):

\newpar{}
\begin{Proposition}\label{ExtensionProp}
	Let $X$ be a Banach space and $Y$ a closed subspace of~$X$.  Then $X$ is separable if and only
	if $Y$ is separable and there is a countable subset $C$ of~$X$ whose image in $X/Y$ (with the
	quotient norm) is dense.
\end{Proposition}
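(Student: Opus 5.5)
The forward direction is immediate from Proposition~\ref{ClosedSubspaceProp} and continuity of the quotient map. If $X$ is separable, that proposition shows the closed subspace $Y$ is separable. For $C$ take a countable dense subset $D$ of $X$. The quotient map $\pi\colon X\to X/Y$ is a norm-decreasing surjection, so $\pi(D)$ is dense in $X/Y$. No choice is involved.

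For the converse, fix a countable dense set $E=\{y_m\}$ in $Y$ and the given countable set $C=\{c_k\}\subseteq X$ with $\pi(C)$ dense in $X/Y$. Since $E$ and $C$ are each given with enumerations, the set
\[
D=\{q c_k + y_m : k,m\in\bbN,\ q\in \bbQ\ (\text{or } \bbQ+i\bbQ)\}
\]
comes with an explicit surjection from a countable set, so it is countable without any appeal to Countable Choice. In fact $D'=\{c_k+y_m\}$ already suffices. I will show $D'$ is dense in $X$.

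Let $x\in X$ and $\varepsilon>0$. By density of $\pi(C)$, pick $k$ with $\|\pi(x)-\pi(c_k)\|<\varepsilon$. This is a single choice from a countable indexed set: take the least such $k$, so no choice is needed. By definition of the quotient norm, $\inf_{y\in Y}\|x-c_k-y\|<\varepsilon$. Hence some $y\in Y$ has $\|x-c_k-y\|<\varepsilon$; we only need existence here, not a choice function. By density of $E$ in $Y$, there is $m$ with $\|y-y_m\|<\varepsilon$. Then $\|x-(c_k+y_m)\|<2\varepsilon$. So $D'$ is dense and $X$ is separable.

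The only point needing care is to avoid any choice: of lifts of a dense sequence of $X/Y$, of dense subsets, or of approximants depending on $x$. This is exactly why the hypothesis supplies $C$ inside $X$ rather than merely asking that $X/Y$ be separable. The argument above uses only existential statements for a single fixed $x$ and $\varepsilon$, so I expect no real obstacle beyond noting this. I would also remark that under Countable AC one can lift any countable dense subset of $X/Y$ to $X$, so the extra hypothesis is then automatic.
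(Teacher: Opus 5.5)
Your proof is correct and follows the paper's argument. The forward direction uses Proposition~\ref{ClosedSubspaceProp} together with a countable dense subset of $X$ as $C$. For the converse you show that $C+E$ is dense, using the definition of the quotient norm and the density of $E$ in $Y$; your bound of $2\varepsilon$ in place of the paper's $\varepsilon$ makes no difference.
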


\begin{Proof}
	If $X$ is separable, then $Y$ is separable (\ref{ClosedSubspaceProp}), and a countable dense set
	in $X$ works for $C$.  Conversely, let $C$ be such a subset of $X$, and $D$ a countable dense
	subset of $Y$.  Then $C+D$ is countable.  Let $\pi:X\to X/Y$ be the quotient map.  Let $x\in X$
	and $\epsilon>0$.  Then there is a $c\in C$ with $\|\pi(x)-\pi(c)\|_{X/Y}=\|\pi(x-c)\|_{X/Y}<\epsilon$.
	From the definition of the quotient norm and density of $D$ in $Y$, there is a $d\in D$ with
	$$\|(x-c)-d\|_X=\|x-(c+d)\|_X<\epsilon$$
	and thus $C+D$ is dense in $X$.
\end{Proof}

\newpar{}
Thus for $X$ to be separable, one needs (in addition to separability of~$Y$) a countable dense subset
of $X/Y$ on which there is a cross section in $X$.  Note that there will always be such a dense
set if $X/Y$ is finite-dimensional (one only needs to take a lift of a basis; recall that finitely
many choices can be made without any form of Choice).

\newpar{}
Thus it is unclear (without Countable AC) whether if $A$ is a \cstar-algebra, $J$ a closed ideal, and $J$ and $A/J$ are 
separable, then $A$ is necessarily separable; it is true if $A/J$ is finite-dimensional.

\subsection{Existence of states}
\newpar{}\label{P.ExistenceOfStates}
Recall that a \emph{state} on a \cstar-algebra is a positive linear functional of norm 1.  Without the Hahn--Banach Theorem it is not automatic that there are enough states on a \cstar-algebra to determine the norm: for example, if all sets of reals have the Property of Baire (e.g., if the Axiom of Determinacy is assumed or in Solovay's model in which all sets of reals are Lebesgue measurable, \cite{So:Model}), neither the commutative \cstar-algebra $\ell^\infty(\N)/c_0(\N)$ nor the Calkin algebra have any states at all (see \cite[Corollary~8.4.3]{blackadar2023hilbert} and \cite[Exercise 12.6.6]{farah2019combinatorial}), respectively). 

\newpar{}\label{P.ExistenceOfStates.1}
Continuing \ref{P.ExistenceOfStates}, one may object that Solovay's construction uses an inaccessible cardinal,  and  moreover that by a celebrated result of Shelah (\cite[Theorem~5.1]{shelah1984can}) a model of \ZF{} in which all sets of reals are Lebesgue measurable (or even a model in which all $\bSigma^1_3$ sets are Lebesgue measurable, see Definition~\ref{Def.Sigma1n}) cannot be constructed without assuming the existence of an inaccessible cardinal in some inner model. This is a problem because the theory ZFC+`There exists an  inaccessible cardinal $\kappa$' implies that the rank-initial segment  $V_\kappa$ of the von Neumann universe $V$ is a model of \ZF{} and in particular that \ZF{} (as well as ZFC) is consistent. Since by G\"odel's theorem\ZFC{} cannot prove its consistency (unless it is inconsistent, in which case it also proves $0=1$), this theory is strictly stronger than ZFC. This is not a place to go into the (fascinating) subject of large cardinals and consistency strengths of extensions of ZFC.  See for example \cite{kanamori2008higher} or \cite{Jech:SetTheory} for more  information. 
Fortunately for us, in \cite[Corollary~7.17]{shelah1984can} it was proved that a model of  the theory  \ZF+\DC+``every set of reals has the Property of Baire' can be constructed starting from a model of ZFC. (Here `reals' stand for `any Polish space' and \DC{} is the axiom of Dependent Choices.)

\newpar{}\label{Par.states} Let us say that a \cstar-algebra $A$ has \emph{sufficiently many states} if for every $x\in A$ there is a state $\varphi$ such that  GNS representation associated with $\varphi$ satisfies $\|\pi_\varphi(x)\|=\|x\|$. By the \cstar-axiom, this is equivalent to requiring $\|\pi_\varphi(x)\|=\|x\|$ only for positive $x\in A$. 
The Axiom of Choice implies that every \cstar-algebra has sufficiently many states, but the previous paragraph shows that this is not the case if the Axiom of Choice fails.

\newpar{}
\begin{Lemma}\label{L.states}  Every separable \cstar-algebra, and more generally every \cstar-algebra with a well-ordered dense subset,  has sufficiently many states. 
\end{Lemma}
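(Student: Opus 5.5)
Fix $x\in A$; by the remark in \ref{Par.states} we may assume $x=a$ is positive, and after rescaling $\|a\|=1$. The plan is to produce, without Choice, a state $\varphi$ with $\varphi(a)=\|a\|$, and then observe that $\|\pi_\varphi(a)\|\ge\|a\|$. Indeed, the GNS construction itself (quotient of $A$ by the left kernel, completion, left multiplication) uses no Choice. If $\xi_\varphi$ is the image of the unit, or of an approximate unit in the non-unital case, then
\[
\|\pi_\varphi(a)\|\ge\langle\pi_\varphi(a)\xi_\varphi,\xi_\varphi\rangle=\varphi(a)=\|a\|.
\]
The reverse inequality is automatic, because $\pi_\varphi$ is a $*$-homomorphism and hence contractive. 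In the non-unital case I would avoid approximate units: pass to the unitization $\tilde A$, which is separable, or has a well-ordered dense subset, whenever $A$ does. Find a state there and restrict it. A state $\psi$ on $\tilde A$ with $\psi(a)=\|a\|$ restricts to a positive functional on $A$ of norm at least $\psi(a)=1$ and at most $1$, so it is a state on $A$.

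To find $\varphi$, work in $\tilde A$ unital. On $Y=\Span\{1,a\}$, a subspace of dimension at most two, define $\varphi_0(\lambda 1+\mu a)=\lambda+\mu\|a\|$. This is well defined: if $a$ is a scalar multiple of $1$, then $a=\|a\|1$ since $a\ge 0$. The standard argument shows that $\|\varphi_0\|=1=\varphi_0(1)$. It uses only the spectral theory of the single element $a$ in the commutative \cstar-algebra $C^*(1,a)$: for $\lambda 1+\mu a$, its norm is at least $|\lambda+\mu t|$ for $t=\|a\|\in\sigma(a)$, by the spectral radius formula for normal elements, and $\|a\|\in\sigma(a)$ for $a\ge0$. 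These are Choice-free facts about the Banach algebra of one element; the spectral radius formula is proved by analysis of the resolvent. Now apply Hahn--Banach: Proposition~\ref{P.HBetc}\eqref{1.BS.HBe} in the separable case, or Proposition~\ref{P.WO.HB} when there is a well-ordered dense subset. This yields $\varphi$ on $\tilde A$ with $\|\varphi\|=1=\varphi(1)$. Finally invoke the standard fact that a functional with $\|\varphi\|=\varphi(1)$ is positive. Its usual proof is a direct norm estimate on self-adjoint elements, $|\varphi(h)+it|\le\|h+it\|$, followed by writing positive elements as $b^*b$ with $\|1-b\|\le1$, and involves no Choice.

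The main obstacle is making sure the background facts used are genuinely Choice-free. These are the spectral facts for a single positive element ($\|a\|\in\sigma(a)$, and nonemptiness of spectra via Liouville applied to functionals defined by Hahn--Banach on a separable subalgebra) and the characterization of positivity via $\|\varphi\|=\varphi(1)$. I would check these by restricting everything to the separable unital subalgebra $C^*(1,a)$ and noting that in the well-ordered case the relevant subalgebras inherit a well-ordered dense subset. Note, though, that the dense subset of $\tilde A$ used in Hahn--Banach must be the whole algebra's, not just that of $C^*(1,a)$.
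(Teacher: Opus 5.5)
Your proposal is correct and is essentially the paper's proof. Both define the norm-one functional $1\mapsto 1$, $a\mapsto\|a\|$ on the span of $1$ and a positive element; the paper takes $a=x^*x$ directly rather than first reducing to positive $x$. Both then extend it by the Choice-free Hahn--Banach theorem (Proposition~\ref{P.HBetc} or Proposition~\ref{P.WO.HB}) and recognize the extension as a state because its norm equals its value at $1$.

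Your write-up is in places more careful than the paper's:
\begin{itemize}
\item You justify $\bigl|\lambda+\mu\|a\|\bigr|\le\|\lambda 1+\mu a\|$ for all complex $\lambda,\mu$ via $\|a\|\in\sigma(a)$, whereas the paper only records the case $\lambda\ge0$, $\mu=1$.
\item You make explicit the unitization step and the GNS estimate $\|\pi_\varphi(a)\|\ge\varphi(a)$, which the paper leaves implicit.
\end{itemize}
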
 

\begin{Proof}  Fix a separable \cstar-algebra $A$. We may assume that $A$ is unital. Fix $x\in A$. 
	We will prove that there is a state $\varphi$ on $A$ such that $\varphi(x^*x)=\|x\|^2$. In particular, 
if $A$ is a unital \cstar-algebra and $x\in A$, then $x^*x\in A_+$ (Proposition~\ref{PropII313}), and it follows from functional
calculus that $\|\lambda 1+x^*x\|=\lambda+\|x^*x\|$ for any $\lambda\geq0$.  Thus, if $S$ is
the two-dimensional subspace of $A$ spanned by 1 and $x^*x$, the linear functional $\phi$ on $S$
defined by $\phi(1)=1$ and $\phi(x^*x)=\|x^*x\|=\|x\|^2$ has norm~1.

The Hahn--Banach Theorem for separable Banach spaces does not require any Choice (Proposition~\ref{P.HBetc}); in fact $\phi$ has an algorithmically constructed extension to $A$ of norm 1 (once a countable dense sequence is fixed).  Since a linear functional $\psi$ on $A$ is a state if and only if it has norm 1 and $\psi(1)=1$ \cite[II.6.2.5]{BlackadarOperator}, we conclude that a separable \cstar-algebra  has sufficiently many states. 
\end{Proof} 

\newpar{} In \ZF{} one cannot prove that every complete, totally bounded, metric space is compact (see \cite[561Y (c)]{Fr:MT5}, also \cite{farah2023choice} or \cite[XII.15.12.9]{BlackadarReal}). We therefore include a proof for the space of states $\sfS(A)$ below.

\newpar{}\begin{Lemma}\label{L.S(A)}
	Suppose that $A$ is a separable unital  \cstar-algebra. Then the state space $\sfS(A)$ with respect to the weak*-topology is a separable compact metrizable space. 
	The pure state space $\sfP(A)$ is a Borel (more precisely, $G_\delta$, i.e., $\Pi^0_2$ in modern descriptive-set theoretic notation, see \cite{Ke:Classical} or \cite[XV.3.4.1]{BlackadarReal}) subset of $\sfS(A)$. 
	
	If $A$ is commutative, unital,  and separable, then $X=\sfP(A)$ is a  compact metric space and $A\cong C(X)$. 
\end{Lemma}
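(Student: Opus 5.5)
The plan is to work inside the compact metric space $\bfB(A^*)$ supplied by Proposition~\ref{P.HBetc}\eqref{2.BS.BA},\eqref{3.BS.w*}. Fix a dense sequence $(x_n)$ in the unit ball of $A$ and give $\bfB(A^*)$ the metric $d(\varphi,\psi)=\sum_n 2^{-n}|\varphi(x_n)-\psi(x_n)|$. A functional $\psi$ of norm at most one is a state exactly when $\psi(1)=1$ \cite[II.6.2.5]{BlackadarOperator}, so
\[
\sfS(A)=\{\psi\in\bfB(A^*):\psi(1)=1\}.
\]
This is a weak*-closed subset of a compact metrizable space, hence compact and metrizable. It is nonempty by Lemma~\ref{L.states}. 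Separability follows from Proposition~\ref{ClosedSubspaceProp} in its general form \cite[XII.15.9.8]{BlackadarReal}, since a subspace of a second countable space is second countable. The point to keep in mind is that compactness is inherited from the closed-subset argument and the Tychonoff-based proof of Banach--Alaoglu. We never need "complete and totally bounded implies compact".

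For the $G_\delta$ claim I would characterise non-purity by a metric condition. A state $\varphi$ fails to be pure if and only if there are $\psi_1,\psi_2\in\sfS(A)$ with $\varphi=\tfrac12(\psi_1+\psi_2)$ and $d(\psi_1,\psi_2)\ge 1/k$ for some $k$. Put
\[
F_k=\{\tfrac12(\psi_1+\psi_2):\psi_1,\psi_2\in\sfS(A),\ d(\psi_1,\psi_2)\ge 1/k\}.
\]
Each $F_k$ is the continuous image of a compact subset of $\sfS(A)^2$, so it is compact and hence closed. It does have to be checked that the continuous image of a compact space is compact in \ZF; this holds by the open-cover proof, which uses no Choice.

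The step needing care is that non-pure states are exactly the midpoints. Suppose $\varphi=t\psi_1+(1-t)\psi_2$ with $\psi_1\ne\psi_2$, and say $t\le 1/2$. Then $\varphi=\tfrac12\bigl(\psi_1'+\psi_2\bigr)$ with $\psi_1'=2t\psi_1+(1-2t)\psi_2$, and $\psi_1'$ is again a state. Once this is in hand, $\sfP(A)=\sfS(A)\setminus\bigcup_k F_k$ is a countable intersection of open sets, hence $G_\delta$.

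For the commutative case I would show that the pure states coincide with the nonzero multiplicative functionals. The standard argument that a pure state on a commutative algebra is a character goes through the GNS representation being irreducible, hence one-dimensional. To avoid Schur-type arguments that might hide Choice, I would argue directly. If $\varphi$ is pure and $0\le a\le 1$, then $b\mapsto\varphi(ab)$ is dominated by $\varphi$, and it is proportional to $\varphi$ by the elementary decomposition $\varphi=\varphi(a\,\cdot)+\varphi((1-a)\,\cdot)$. This gives $\varphi(ab)=\varphi(a)\varphi(b)$. Conversely, characters are pure, using Cauchy--Schwarz. The set of characters is then weak*-closed, so $X=\sfP(A)$ is compact metric by the first part.

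The Gelfand map $a\mapsto\hat a$, $\hat a(\varphi)=\varphi(a)$, is a $*$-homomorphism into $C(X)$. It is isometric: by Lemma~\ref{L.states} there is a state $\varphi$ with $\varphi(x^*x)=\|x\|^2$. The set of such states is a closed face, so it is compact, convex and nonempty. Krein--Milman from Proposition~\ref{P.HBetc}\eqref{4.BS.KM} then gives it an extreme point, which is a pure state. Alternatively, one can take $\sup_X|\hat x|^2$ directly via spectral radius. The image is closed, separates points and contains the constants, so Stone--Weierstrass (which needs no Choice for compact metric $X$) gives surjectivity.

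I expect the main obstacle to be this last step: producing, in \ZF, a pure state attaining the norm. I would rely on Proposition~\ref{P.HBetc}\eqref{4.BS.KM}. If I want to avoid that route, I would instead show that a maximum of the continuous function $\varphi\mapsto\varphi(x^*x)$ on compact $\sfS(A)$ is attained at an extreme point, via a countable exhaustion using the metric $d$.
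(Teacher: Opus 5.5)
Your proposal is correct and follows essentially the paper's route. Compactness comes from weak*-closedness of $\sfS(A)$ inside the Banach--Alaoglu ball, and the $G_\delta$ claim comes from the compact midpoint sets $F_k$, which is exactly the argument the paper gives for $\partial K$ in \ref{S.KreinMilman}. Pure states coincide with characters, and $A\cong C(X)$ follows from Lemma~\ref{L.states}, Krein--Milman and Stone--Weierstrass. Your norming-face argument is just an explicit form of the paper's step that $\sfS(A)$ is the closed convex hull of $\sfP(A)$.

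Two small remarks on your asides. First, in \ZF{} second countability alone does not give separability. What does the work is that $\sfS(A)$ is closed, hence complete, in the separable compact ball. Second, the spectral-radius alternative still needs a character $\chi$ with $|\chi(x)|=\|x\|$. That is the very existence problem you identify as the crux, and it is classically solved by Zorn's lemma on maximal ideals.
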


\begin{Proof} The state space is nonempty by Lemma~\ref{L.states}. The standard proof that $\sfS(A)$ is a weak*-closed subset of the unit ball of the Banach space dual of $A$ requires no choice. By the Banach--Alaoglu theorem (Proposition~\ref{P.HBetc} \eqref{2.BS.BA}  and \eqref{3.BS.w*}), the latter is a separable compact metric space. 
	
	Finally, by the Krein--Milman theorem  for separable compact convex sets (a proof is provided in \ref{S.KreinMilman}), $\sfS(A)$  is the closed convex hull of the space of its extreme points, which is the space of pure states. 
	
	That $\sfP(A)$ is a $G_\delta$ is a standard result, see.\ e.g.\  \cite[XVIII.9.5.10]{BlackadarReal}.
	
	The proof in case when $A$ is not necessarily separable but has a well-ordered dense subset is analogous and uses Proposition~\ref{P.WO.HB} (although $\sfP(A)$ will not be Borel in general).  
	
	Finally, if $A$ is unital and commutative then the standard proof that the space of characters is closed in $\sfS(A)$ and that it coincides with $\sfP(A)$ applies.
\end{Proof}

\newpar{}
\begin{Proposition}\label{FaithfulStateThm}
A separable \cstar-algebra has a faithful state.
\end{Proposition}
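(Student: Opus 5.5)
Fix a dense sequence $(a_n)_{n\in\bbN}$ in $A$. We may assume $A$ is unital: if not, we work in the unitization $\tilde A$, which is still separable (it is $A+\bbC 1$, and the rational combinations of $1$ with the $a_n$ are dense). A faithful state on $\tilde A$ restricts to a positive functional on $A$ that is faithful on $A_+$ and nonzero, and normalizing it by its norm gives a faithful state on $A$. The plan is to choose, for each $n$, a state $\varphi_n$ that detects $a_n^*a_n$, and then to set $\varphi=\sum_n 2^{-n-1}\varphi_n$. This series converges in norm, and $\varphi$ is a state because $\varphi(1)=1$ and $\sfS(A)$ is norm-closed and convex.

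The point where care is needed is the choice of the sequence $(\varphi_n)$: we need a countable sequence of choices, and \ZF{} alone does not supply that. Lemma~\ref{L.states} is not enough as an existence statement, so we use its proof instead. For each $n$ we take the two-dimensional subspace spanned by $1$ and $a_n^*a_n$ and the functional $\phi_n$ on it given by $\phi_n(1)=1$ and $\phi_n(a_n^*a_n)=\|a_n\|^2$. When $a_n^*a_n$ is a scalar multiple of $1$, the span is one-dimensional and the definition is consistent, since then $a_n^*a_n=\|a_n\|^2 1$. The proof of Proposition~\ref{P.HBetc}\eqref{1.BS.HBe} extends $\phi_n$ to a norm-one functional on $A$ by a deterministic recipe: always take the left endpoint $\gamma=\alpha$, working along the fixed dense sequence, and do this for the real part before complexifying in the canonical way. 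So once $(a_n)$ is fixed, the map $n\mapsto\varphi_n$ is explicitly defined and no Choice is used. Each $\varphi_n$ is a state because it has norm $1$ and $\varphi_n(1)=1$.

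Faithfulness comes next. Suppose $x\in A$ and $\varphi(x^*x)=0$, and assume toward a contradiction that $x\neq0$. Pick $n$ with $\|a_n-x\|<\epsilon$, where $\epsilon$ is small relative to $\|x\|$. Since $\varphi_n\le 2^{n+1}\varphi$ on $A_+$, we get $\varphi_n(x^*x)=0$. The Cauchy--Schwarz inequality gives $|\varphi_n(y^*z)|^2\le\varphi_n(y^*y)\varphi_n(z^*z)$, so $\varphi_n(x^*y)=0$ and $\varphi_n(y^*x)=0$ for all $y$. Expanding $a_n^*a_n$ with $a_n=x+(a_n-x)$ then yields $\varphi_n(a_n^*a_n)=\varphi_n((a_n-x)^*(a_n-x))\le\epsilon^2$. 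On the other hand $\varphi_n(a_n^*a_n)=\|a_n\|^2\ge(\|x\|-\epsilon)^2$. Taking $\epsilon<\|x\|/2$ makes these two bounds incompatible, so $x=0$.

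Faithfulness therefore reduces to routine estimates. The only real obstacle is the one addressed in the second step: producing the whole sequence $(\varphi_n)$ uniformly in $n$ without countable choice. This relies on the canonical nature of the Hahn--Banach extension in Proposition~\ref{P.HBetc}, together with the observation that it depends definably on the input functional.
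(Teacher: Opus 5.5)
Your proof is correct, but it obtains the countable family of states differently from the paper.

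The paper takes a weak$^*$-dense sequence $(\phi_n)$ in $\sfS(A)$ and sets $\phi=\sum_n 2^{-n}\phi_n$. Such a sequence exists because $\sfS(A)$ is a separable compact metrizable space (Lemma~\ref{L.S(A)}, which rests on Banach--Alaoglu and weak$^*$ metrizability). Faithfulness is then checked on a positive $a$ of norm one: take some state $\psi$ with $\psi(a)=1$ and approximate it weak$^*$ by some $\phi_n$, so that $\phi_n(a)>1/2$. That argument only ever needs one norming state for one element, so the Hahn--Banach construction need not be uniform. The only potential use of Choice is absorbed into the separability of $\sfS(A)$.

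You instead index the states by a norm-dense sequence $(a_n)$ in $A$, so you must produce all the norming states $\varphi_n$ at once. That is exactly why you need the canonical left-endpoint Hahn--Banach recipe rather than the bare existence statement of Lemma~\ref{L.states}, and you place that step correctly. In exchange, your argument never uses the weak$^*$ topology, Banach--Alaoglu, or the separability of $\sfS(A)$. Faithfulness follows from a Cauchy--Schwarz estimate in the norm topology, and the resulting faithful state is explicitly determined by the chosen dense sequence.

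The paper's route is shorter once Lemma~\ref{L.S(A)} is available, and it shows that \emph{any} weak$^*$-dense sequence of states yields a faithful state.
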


\begin{Proof}
We may assume $A$ is unital.  Then $\sfS(A)$ is weak-* separable (\ref{L.S(A)}).
Let $(\phi_n)$ be a dense sequence in $\sfS(A)$, and set $\phi=\sum_{n=1}^\infty 2^{-n}\phi_n$.  Then $\phi$ is a state on $A$.  We claim that~$\phi$ is faithful.  Fix a positive $a\in A$ of norm 1.
Then there is an (algorithmically constructed) state $\psi$ on $A$ with $\psi(a)=1$.
Approximating $\psi$ by a $\phi_n$, there is an $n$ with $\phi_n(a)>1/2$, and we have
$\phi(a)\geq2^{-n}\phi_n(a)>2^{-n-1}>0$.
\end{Proof}

\subsection{Spectrum and continuous functional calculus}

\newpar{}
If $A$ is a unital   Banach algebra and $x\in A$, the {\em spectrum} of $x$ is
$$\sigma(x)=\sigma_A(x)=\{\lambda\in\bbC:x-\lambda 1\mbox{ is not invertible in }A\}.$$
Since the invertible elements in $A$ form an open set, $\sigma(x)$ is a closed subset of $\bbC$.

If $A$ is a \cstar-algebra and $x=x^*\in A$, then $\sigma(x)\subseteq\R$ \cite[II.1.6.2]{BlackadarOperator} (this proof requires no Choice; see \cite[4.3]{GoodearlNotes} for a more purely algebraic
proof which transparently requires no Choice).

We have the following theorem; see e.g.\ \cite{GoodearlNotes} for a proof not requiring any Choice.

\newpar{}
\begin{Theorem}\label{SpectrumThm}
	Let $A$ be a unital  complex Banach algebra, and $x\in A$.
	Then $\sigma(x)$ is nonempty and compact, and
	\[
	\max\{|\lambda|:\lambda\in\sigma(x)\}=\lim_{n\to\infty}\|x^n\|^{1/n}=\inf_n\|x^n\|^{1/n}\ .
	\]
\end{Theorem}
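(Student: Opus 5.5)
We will prove the Gelfand spectral radius theorem in \ZF{}, following the classical argument but replacing every appeal to the Hahn--Banach theorem by one that is either choice-free or not needed. The key difficulty is nonemptiness of $\sigma(x)$: the classical proof applies bounded linear functionals to the resolvent, and for a nonseparable $A$ we cannot assume such functionals exist (cf.\ \ref{P.ExistenceOfStates}). We first reduce to the separable case. Let $B$ be the closed unital subalgebra generated by $x$, i.e.\ the closure of the polynomials in $x$ with coefficients in $\Q+i\Q$, which is countable and explicitly enumerable. Then $B$ is a separable Banach algebra, and $\|x^n\|$ is the same in $A$ and $B$. Invertibility in $B$ implies invertibility in $A$, so $\sigma_A(x)\subseteq\sigma_B(x)$. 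Conversely, the boundary of $\sigma_B(x)$ lies in $\sigma_A(x)$, since a limit of invertibles whose inverses blow up is not invertible anywhere. Hence nonemptiness and the radius formula for $A$ follow once we establish them for $B$ together with compactness. Note that Proposition~\ref{P.HBetc}\eqref{1.BS.HBe} applies in $B$ without any Choice.

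Next come the choice-free elementary facts, in order. (i) If $\|y\|<1$ then $1-y$ is invertible via the Neumann series, which involves no choices. Hence the invertibles form an open set, and $\sigma(x)\subseteq\{|\lambda|\le\|x\|\}$, so $\sigma(x)$ is closed and bounded, and compact by Heine--Borel in $\bbC$, which needs no Choice. (ii) The resolvent $R(\lambda)=(x-\lambda1)^{-1}$ is analytic on the complement of $\sigma(x)$, with the usual power-series expansions. (iii) Fekete's lemma applied to the submultiplicative sequence $\|x^n\|$ shows that $\lim\|x^n\|^{1/n}=\inf_n\|x^n\|^{1/n}$. (iv) If $\lambda\in\sigma(x)$, then $\lambda^n\in\sigma(x^n)$ by the factorization $x^n-\lambda^n=(x-\lambda)p(x)$ with commuting factors. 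Hence $|\lambda|\le\|x^n\|^{1/n}$, which gives $\max\le\inf$.

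The main step is to show that $\sigma(x)\neq\emptyset$ and that $r:=\lim\|x^n\|^{1/n}\le\max|\sigma(x)|$. Working in $B$, for each $\varphi\in B^*$ the function $\varphi\circ R$ is holomorphic off $\sigma(x)$. Suppose $\sigma(x)=\emptyset$. Then $\varphi\circ R$ is entire and tends to $0$ at infinity, so by Liouville it vanishes identically. Choice-free Hahn--Banach in $B$ produces $\varphi$ with $\varphi(R(0))=\|R(0)\|\neq0$, a contradiction. For the radius, let $\rho=\max|\sigma(x)|$. Each function $\lambda\mapsto\varphi(R(1/\lambda))$, with the Laurent coefficients $\varphi(x^n)$, is holomorphic on $|\lambda|<1/\rho$. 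Cauchy estimates then show that $\sup_n|\varphi(x^n)|/t^n<\infty$ for every $t>\rho$. By the Uniform Boundedness Theorem, which holds for the separable space $B^*$-acting-on-$B$ setting by Proposition~\ref{P.HBetc}\eqref{6.BS.OM}, together with the norming property from Hahn--Banach, we get $\|x^n\|\le Ct^n$, so $r\le t$. Care is needed in applying uniform boundedness. We view $x^n/t^n$ as functionals on the Banach space $B^*$. Since $B^*$ may not be separable, we instead restrict to the weak*-separable, norm-closed span of a countable norming family: a norming sequence $\varphi_k$ obtained from the countable dense set by Hahn--Banach. Alternatively, we avoid uniform boundedness entirely by using vector-valued holomorphy, since Cauchy's formula for $B$-valued analytic functions, via Riemann integrals of continuous functions, needs no Choice. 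This route directly gives $\|x^n\|\le M(t)t^{n}$ with $M(t)=\max_{|\lambda|=t}\|R(\lambda)\|$. I expect this point to be the main technical obstacle and would adopt the vector-valued Cauchy integral route, which also removes the need for Hahn--Banach in the nonemptiness argument by a vector-valued Liouville theorem.
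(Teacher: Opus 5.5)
The paper gives no argument of its own here; it only cites Goodearl's notes~\cite{GoodearlNotes} for a choice-free proof. Your proposal is a correct, self-contained ZF proof.

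You isolate the one choice-sensitive step of the textbook argument: using bounded functionals (Hahn--Banach) to reduce holomorphy of the vector-valued resolvent to scalar holomorphy. You then remove that step. The route you finally adopt, doing Cauchy's formula, Cauchy estimates and Liouville directly for $A$-valued functions, is the cleanest:
\begin{itemize}
\item Neumann series, the resolvent identity and Fekete's lemma are all ZF.
\item Riemann integrals of continuous $A$-valued functions need only Cauchy completeness, which follows from $\sigma$-completeness.
\item Goursat's lemma needs no Choice if the subtriangle is picked by a fixed rule.
\end{itemize}
The argument works verbatim for nonseparable $A$, so the passage to $B$ is not needed.

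Your functional route is also sound, but only in its repaired form. Proposition~\ref{P.HBetc}\eqref{6.BS.OM} covers only separable domains, and $B^*$ need not be norm-separable; Baire category for arbitrary complete metric spaces is equivalent to \DC{}. So the Uniform Boundedness Theorem must be applied on the norm-closed span of a norming sequence $(\varphi_k)$. That span is norm-separable (weak*-separability is irrelevant here) and complete.

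Your reduction to $B$ is correct. For Banach algebras one only has $\partial\sigma_B(x)\subseteq\sigma_A(x)\subseteq\sigma_B(x)$, not spectral permanence, and this suffices. A nonempty compact subset of $\bbC$ has nonempty boundary, and $|\lambda|$ attains its maximum there. This mirrors the paper's reduction to $\cst(\bar a,1)$ in \ref{Pf.Spectral}. After it, you could even finish in the paper's style via Theorem~\ref{T.Provability}: for separable $B$, nonemptiness of $\sigma_B(x)$ is a $\bSigma^1_1$ statement in a code for $(B,x)$.

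Two harmless slips:
\begin{itemize}
\item The Cauchy estimate gives $\|x^n\|\le t^{n+1}M(t)$, not $t^nM(t)$.
\item Nonemptiness uses $1\neq 0$, so that $R(0)\neq 0$.
\end{itemize}
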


\newpar{}
\begin{Corollary}  
	Let $A$ be a unital \cstar-algebra, $x=x^*\in A$.  Then $\sigma(x)\subseteq[-\|x\|,\|x\|]$, and at least
	one of $\|x\|,-\|x\|$ is in $\sigma(x)$.  More generally, if $x$ is normal, then 
	$\|x\|=\max\{|\lambda|:\lambda\in\sigma(x)\}$.
\end{Corollary}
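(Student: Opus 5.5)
The plan is to apply Theorem~\ref{SpectrumThm} and compute the spectral radius from norms of powers, using the \cstar-identity. Everything takes place inside the fixed algebra $A$ and uses only the identity $\|y^*y\|=\|y\|^2$ and elementary limits, so no Choice is needed.

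\emph{Self-adjoint case.} For $x=x^*$ we have $\|x^2\|=\|x^*x\|=\|x\|^2$. Since $x^{2^k}$ is again self-adjoint, induction on $k$ gives $\|x^{2^k}\|=\|x\|^{2^k}$. Theorem~\ref{SpectrumThm} says $\lim_n\|x^n\|^{1/n}$ exists and equals the spectral radius. Computing the limit along the subsequence $n=2^k$ yields $\max\{|\lambda|:\lambda\in\sigma(x)\}=\|x\|$. By the earlier remark, $\sigma(x)\subseteq\bbR$. Hence $\sigma(x)\subseteq[-\|x\|,\|x\|]$. The maximum of $|\lambda|$ is attained because $\sigma(x)$ is compact and nonempty, so some $\lambda\in\sigma(x)$ has $|\lambda|=\|x\|$, that is, $\lambda=\pm\|x\|$.

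\emph{Normal case.} For normal $x$, I first check that $\|x^{2^k}\|=\|x\|^{2^k}$. Since $x^*x$ is self-adjoint and $x$ commutes with $x^*$,
\[
\|x^2\|^2=\|(x^2)^*x^2\|=\|(x^*x)^*(x^*x)\|=\|x^*x\|^2=\|x\|^4 .
\]
Thus $\|x^2\|=\|x\|^2$. Powers of a normal element are normal, so induction gives $\|x^{2^k}\|=\|x\|^{2^k}$, and Theorem~\ref{SpectrumThm} finishes as before.

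The main (mild) obstacle is the rearrangement $(x^2)^*x^2=(x^*x)^2$. It uses normality, $x^*x^*xx=x^*xx^*x$, together with the self-adjoint case applied to $x^*x$. Both steps are purely algebraic.
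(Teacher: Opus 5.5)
Your proof is correct and follows essentially the paper's route: the \cstar-identity gives $\|x^{2^k}\|=\|x\|^{2^k}$, and Theorem~\ref{SpectrumThm} then identifies the spectral radius with $\|x\|$. The paper additionally uses the infimum formula and submultiplicativity to show the whole sequence $\|x^n\|^{1/n}$ is constant, which you don't need. Your explicit check that $(x^2)^*x^2=(x^*x)^2$ for normal $x$ fills in a step the paper leaves implicit in the phrase ``by the \cstar-axiom''.
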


\begin{Proof}
	By the \cstar-axiom, $\|x^{2^k}\|^{2^{-k}}=\|x\|$ for any $k$. 
	So we have that $\lim_{n\to\infty} \|x^n\|^{1/n}$ exists and equals $\inf_n\|x^n\|^{1/n}$,  and submultiplicativity of the norm implies $\|x^n\|^{1/n} \leq \|x\|$ for all $n$, so  we have $\|x^n\|^{1/n}=\|x\|$ for all~$n$.
	Thus the sequence in the statement of Theorem~\ref{SpectrumThm} is constant. 
	\end{Proof}

Combining this with the Spectral Mapping Theorem for polynomials \cite[XVI.10.3.15]{BlackadarReal}, we obtain:

\newpar{}
\begin{Corollary}
	Let $A$ be a unital \cstar-algebra, $x$ a normal element of~$A$, and $p$ a polynomial with
	complex coefficients.  Then 
	\[
	\|p(x)\|=\max\{|p(\lambda)|:\lambda\in\sigma(x)\}.
	\]
\end{Corollary}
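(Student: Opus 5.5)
The plan is to reduce the statement to the preceding corollary, applied to the element $p(x)$, and then to invoke the Spectral Mapping Theorem for polynomials. Write $y=p(x)$.

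First, $y$ is normal. The element $x$ commutes with $x^*$, and $p(x)^*=\bar p(x^*)$, where $\bar p$ is the polynomial with conjugated coefficients. Hence $y$ and $y^*$ are both polynomials in the commuting elements $x$ and $x^*$, so they commute. This is a purely algebraic identity and uses no Choice.

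Next, I apply the preceding corollary to the normal element $y$. It gives
\[
\|p(x)\|=\max\{|\mu|:\mu\in\sigma(p(x))\},
\]
and the maximum exists because $\sigma(p(x))$ is nonempty and compact by Theorem~\ref{SpectrumThm}.

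Finally, I use the Spectral Mapping Theorem for polynomials \cite[XVI.10.3.15]{BlackadarReal}, namely $\sigma(p(x))=p(\sigma(x))$. Substituting this into the display yields $\max\{|p(\lambda)|:\lambda\in\sigma(x)\}$, which is the claimed formula. If $p$ is constant, both sides equal $|p(0)|$, since $\sigma(x)\neq\emptyset$.

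The only step that needs any care is checking that the Spectral Mapping Theorem uses no Choice. Its proof is elementary. For fixed $\mu$, factor $p(z)-\mu=c\prod_{i=1}^n(z-\lambda_i)$ over $\bbC$. Then $p(x)-\mu 1=c\prod_{i=1}^n(x-\lambda_i1)$, and the factors commute. A finite product of commuting elements is invertible if and only if each factor is invertible. So $\mu\in\sigma(p(x))$ exactly when some root $\lambda_i$ of $p-\mu$ lies in $\sigma(x)$, that is, when $\mu\in p(\sigma(x))$. The argument involves only finitely many choices (the roots, which are given by the Fundamental Theorem of Algebra, a Choice-free result), so it goes through in \ZF.
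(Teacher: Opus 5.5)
Your proof is correct and takes the paper's route: the paper obtains this corollary by applying the preceding corollary to the normal element $p(x)$ and combining it with the Spectral Mapping Theorem $\sigma(p(x))=p(\sigma(x))$, which it simply cites. The additions in your version are the check that $p(x)$ is normal, the separate handling of constant $p$, and the factorization argument showing the Spectral Mapping Theorem needs no Choice; the paper leaves these details implicit.
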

See \ref{FullFunctCalc} for a much better result, even for the case $n=1$.

\newpar{}\label{S.CFC}
The usual facts about continuous functional calculus of self-adjoint elements of a \cstar-algebra
hold without any Choice.  
%
No Choice is needed to prove the Stone--Weierstrass Theorem (cf.\ \cite[XVIII.10.2.16]{BlackadarReal}).
Combining this with the Spectral Mapping Theorem for polynomials, we obtain:

\newpar{}
\begin{Proposition}\label{SAFunctCalc}
	Let $A$ be a unital \cstar-algebra, and $x=x^*\in A$.  Then $p\mapsto p(x)$ ($p$ a complex polynomial)
	extends to an isometric
	isomorphism from $C(\sigma(x))$ onto $C^*(x,1)$, the uniformly closed subalgebra of $A$
	generated by $x$ and 1.
\end{Proposition}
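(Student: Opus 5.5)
The plan is to follow the classical argument and check that each step needs no Choice. First define $\Phi_0$ on the algebra $\cP$ of complex polynomials, restricted to $\sigma(x)$, by $\Phi_0(p|_{\sigma(x)})=p(x)$.

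This is well defined and isometric. If $p$ is any complex polynomial, then $p(x)$ is normal, since $x$ is self-adjoint and $p(x)^*=\bar p(x)$ commutes with $p(x)$. The preceding Corollary then gives
\[
\|p(x)\|=\max\{|p(\lambda)|:\lambda\in\sigma(x)\}=\|p|_{\sigma(x)}\|_\infty .
\]
So if two polynomials agree on $\sigma(x)$, the images coincide. Note that $\sigma(x)$ is nonempty and compact by Theorem~\ref{SpectrumThm}, so the sup norm makes sense. Clearly $\Phi_0$ is a unital algebra homomorphism. It is also a $*$-homomorphism: since $\sigma(x)\subseteq\R$, we have $\overline{p}|_{\sigma(x)}=\bar p|_{\sigma(x)}$, where $\bar p$ has conjugated coefficients, and $\bar p(x)=p(x)^*$ because $x=x^*$.

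Next comes density. The restrictions of polynomials to $\sigma(x)$ form a unital subalgebra of $C(\sigma(x))$. It separates points, since $\lambda\mapsto\lambda$ is injective, and it is closed under conjugation because $\sigma(x)\subseteq\R$. By the Choice-free Stone--Weierstrass theorem (\ref{S.CFC}), it is uniformly dense in $C(\sigma(x))$. Alternatively, one can use the classical Weierstrass approximation theorem directly, via Bernstein polynomials, which is entirely constructive on a compact subset of $\R$.

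The extension step is where one has to be careful about Choice, since $\sigma$-completeness versus Cauchy completeness matters. Let $f\in C(\sigma(x))$. The plan is to pick a specific approximating sequence without choice, for example by using Bernstein polynomials of a Tietze-type extension of $f$ to an interval containing $\sigma(x)$. Simpler still: define $\Phi(f)$ as the unique point of $\bigcap_n\overline{\{p(x):\|p-f\|_{\sigma(x)}<1/n\}}$. These are decreasing nonempty closed sets in $A$, with diameter at most $2/n$ by isometry, so $\sigma$-completeness of $A$ gives exactly one point and no choice is needed.

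Then $\Phi$ is an isometric unital $*$-homomorphism, by continuity of the operations. Its range is complete, because it is an isometric image of the complete space $C(\sigma(x))$ and the limits are obtained in the same way. So the range is closed. It contains the polynomials in $x$ and $1$ and is contained in their closure, hence it equals $C^*(x,1)$. Injectivity follows from isometry. The only real obstacle is therefore this Choice-free extension via $\sigma$-completeness; everything else is the standard argument.
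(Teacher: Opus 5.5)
Your proof is correct and takes the same approach as the paper. The paper gets the norm identity $\|p(x)\|=\max_{\lambda\in\sigma(x)}|p(\lambda)|$ from the spectral radius formula and the Spectral Mapping Theorem for polynomials, then combines it with the Choice-free Stone--Weierstrass theorem; your explicit Choice-free extension step via $\sigma$-completeness (intersecting the closures of $\{p(x):\|p-f\|_{\sigma(x)}<1/n\}$) is a detail the paper leaves implicit, and you handle it correctly.
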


\smallskip

If $A$ is a nonunital \cstar-algebra, we also get continuous functional calculus of self-adjoint elements
of $A$: if $x=x^*\in A$, then $C^*(x)\cong C_0(\sigma(x))$, the complex-valued continuous
functions on $\sigma(x)$ (computed in $A^\dag$) vanishing at~0.

\newpar{}
As usual, we say an element $x$ in a \cstar-algebra $A$ is {\em positive} ($x\geq0$) if $x=x^*$ and
$\sigma(x)\subseteq[0,\infty)$.  Denote by $A_+$ the set of positive elements of~$A$.

From continuous functional calculus we obtain most of \cite[II.3.1.2]{BlackadarOperator} (we
later get all of this result):

\newpar{}
\begin{Proposition}\label{PropII312}
	Let $A$ be a (unital) \cstar-algebra and $x\in A$.  Then
	\begin{enumerate}
		\item[(i)]  If $x\geq0$ and $-x\geq0$, then $x=0$.
		\item[(ii)]  If $x=x^*$, then $x^2\geq0$.
		\item[(iii)]  If $x\geq0$, then $\|x\|=\max\{\lambda:\lambda\in\sigma(x)\}$.
		\item[(iv)]  If $x=x^*$ and $\|x\|\leq2$, then $x\geq0$ if and only if $\|1-x\|\leq1$.
		\item[(v)]  If $x=x^*$ and $f\in C(\sigma(x))$, then $f(x)\geq0$ if and only if $f(t)\geq0$
		for all $t\in\sigma(x)$.
		\item[(vi)]  If $x=x^*$ there is a unique decomposition $x=x_+-x_-$ in $C^*(x)\subseteq A$
		with $x_+,x_-\in A_+$ and $x_+x_-=0$.  We have $x_+=f(x)$ and $x_-=g(x)$, where
		$f(t)=\max(t,0)$ and $g(t)=-\min(t,0)$.  Thus every element of $A$ is a linear combination
		of four elements of $A_+$.
		\item[(vii)]  Every positive element $x$ of $A$ has a unique (in $C^*(x)$) positive square root
		$x^{1/2}=f_{1/2}(x)$, where $f_{1/2}(t)=t^{1/2}$.  More generally, if $x\geq0$ and $\alpha$
		is a positive real number, there is a positive $x^\alpha=f_\alpha(x)\in C^*(x)_+$, where
		$f_\alpha(t)=t^\alpha$; these elements satisfy $x^\alpha x^\beta=x^{\alpha+\beta}$, 
		$(x^{\alpha})^\beta=x^{\alpha\beta}$, $x^1=x$,
		and $\alpha\mapsto x^\alpha$ is continuous.  If $x$ is invertible, $x^\alpha$ is also
		defined for $\alpha\leq0$, with the same properties.
	\end{enumerate}
\end{Proposition}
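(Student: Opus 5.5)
All seven items follow from Proposition~\ref{SAFunctCalc}, applied inside the unital \cstar-algebra $C^*(x,1)$. Two facts will be used throughout.
\begin{enumerate}
\item[(a)] The isomorphism $\Phi\colon C(\sigma(x))\to C^*(x,1)$, $f\mapsto f(x)$, is an isometric $*$-isomorphism.
\item[(b)] The spectrum of $f(x)$ is $f(\sigma(x))$.
\end{enumerate}

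For (b) I compute the spectrum in $C^*(x,1)$: $f(x)-\lambda 1$ is invertible there exactly when $f-\lambda$ has no zero on $\sigma(x)$. To transfer this to $A$, I need spectral permanence, $\sigma_{C^*(x,1)}(y)=\sigma_A(y)$ for self-adjoint $y$. Its standard proof uses only that $\sigma_B(y)\subseteq\R$ when $y=y^*$ (already noted above as Choice-free). Indeed, $\sigma_B(y)$ has no interior in $\bbC$, so $\sigma_B(y)=\partial\sigma_B(y)\subseteq\sigma_A(y)$, because boundary points of the spectrum are limits of invertibles whose inverses blow up. This argument is explicit and uses no Choice. I expect it to be the only real obstacle, and it is mild.

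With (a) and (b) in hand, the items go as follows.
\begin{itemize}
\item \textbf{(v)} By (b), $f(x)$ is self-adjoint with $\sigma(f(x))=f(\sigma(x))$. So $f(x)\ge 0$ exactly when $f\ge0$ on $\sigma(x)$.
\item \textbf{(ii)} Apply (v) to $f(t)=t^2$.
\item \textbf{(i)} Here $\sigma(x)\subseteq[0,\infty)\cap(-\infty,0]=\{0\}$. Then $\|x\|=\max|\sigma(x)|=0$ by the Corollary for normal elements.
\item \textbf{(iii)} This is the same Corollary combined with $\sigma(x)\subseteq[0,\infty)$.
\item \textbf{(iv)} We have $\sigma(x)\subseteq[-2,2]$, and $\|1-x\|=\max_{t\in\sigma(x)}|1-t|$. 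This is $\le1$ iff $\sigma(x)\subseteq[0,2]$, i.e.\ iff $x\ge0$.
\end{itemize}

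For (vi), existence comes from taking $x_+=f(x)$ and $x_-=g(x)$. Then (v) gives positivity, and $fg=0$ together with $f-g=\mathrm{id}$ gives the identities. For uniqueness inside $C^*(x)$, transport to $C(\sigma(x))$ via $\Phi$. There a real function $h$ written as $h=u-v$ with $u,v\ge0$ and $uv=0$ forces $u=\max(h,0)$ pointwise. The nonunital case is handled in $A^\dagger$, noting $f(0)=g(0)=0$. Finally, write $a=\mathrm{Re}\,a+i\,\mathrm{Im}\,a$ and decompose each self-adjoint part; this gives the four positive elements.

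For (vii), define $x^\alpha=f_\alpha(x)$. The algebraic identities hold because they hold for the functions on $\sigma(x)\subseteq[0,\infty)$ and $\Phi$ is a homomorphism; for the composite, apply (b) to see $\sigma(x^\alpha)=\sigma(x)^\alpha$, and note that functional calculus is compatible with composition, which one checks first on polynomials and then by density. Continuity of $\alpha\mapsto x^\alpha$ follows from isometry of $\Phi$ and uniform continuity of $(\alpha,t)\mapsto t^\alpha$ on compact sets, for $\alpha>0$. Uniqueness of the square root in $C^*(x)$ reduces via $\Phi$ to uniqueness of nonnegative square roots of functions. If $x$ is invertible, then $0\notin\sigma(x)$, so $t^\alpha$ is continuous on $\sigma(x)$ for all real $\alpha$ and the same arguments apply.

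No step requires Choice, since every object is given by an explicit formula.
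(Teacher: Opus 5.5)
Your proposal is correct and follows the paper's route. The paper simply asserts that these items follow from the continuous functional calculus of Proposition~\ref{SAFunctCalc}; you supply the details, including the spectral-permanence step ($\sigma_{C^*(x,1)}(y)=\sigma_A(y)$ for self-adjoint $y$), which the paper leaves implicit. The only slip is cosmetic: in (v), $f(x)$ is self-adjoint only when $f$ is real-valued, so for the forward direction first note that $f(x)\geq 0$ forces $\bar f=f$ on $\sigma(x)$, by injectivity of $\Phi$.
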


These parts of \cite[II.3.1.2]{BlackadarOperator} are what is required to make the proof of \cite[II.3.1.3]{BlackadarOperator} work (with no Choice needed):

\newpar{}
\begin{Proposition}\label{PropII313}
	Let $A$ be a \cstar-algebra.  Then
	\begin{enumerate}
		\item[(i)]  $A_+$ is a closed cone in $A$; in particular, if $x,y\in A_+$, then $x+y\in A_+$.
		\item[(ii)]  If $x\in A$, then $x^*x\geq0$.
	\end{enumerate}
\end{Proposition}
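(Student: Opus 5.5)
The plan is to follow the standard argument of \cite[II.3.1.3]{BlackadarOperator}, checking at each step that only continuous functional calculus for a single self-adjoint element (Proposition~\ref{SAFunctCalc}) and the consequences listed in Proposition~\ref{PropII312} are used, so that no Choice enters. We may pass to the unitization $A^\dag$ when $A$ is nonunital; this requires no choice, and positivity together with the spectrum of an element of $A$ (modulo $0$) is unchanged.

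For (i), closedness of $A_+$ comes from Proposition~\ref{PropII312}(iv). Positivity means $x=x^*$, $\|x\|\leq 2$ and $\|1-x\|\leq1$ after rescaling, and each of these conditions is norm-closed. Closure under positive scalars is immediate from $\sigma(tx)=t\sigma(x)$. For sums, let $x,y\in A_+$ with $\|x\|,\|y\|\leq 1$ (rescale otherwise). By (iv) we have $\|1-x\|\leq1$ and $\|1-y\|\leq1$, so
$$\|1-\tfrac12(x+y)\|\leq\tfrac12\|1-x\|+\tfrac12\|1-y\|\leq1.$$
Since $\tfrac12(x+y)$ is self-adjoint of norm at most $1$, (iv) shows it is positive. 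Finally, $A_+\cap(-A_+)=\{0\}$ by (i) of Proposition~\ref{PropII312}, so $A_+$ is a cone.

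For (ii), the main step, let $x\in A$ and put $a=x^*x$, which is self-adjoint. By Proposition~\ref{PropII312}(vi) write $a=a_+-a_-$ with $a_+a_-=0$. Set $y=xa_-$. Then
$$y^*y=a_-x^*xa_-=a_-(a_+-a_-)a_-=-a_-^3,$$
and $a_-^3\geq0$ by functional calculus (Proposition~\ref{PropII312}(v)), so $-y^*y\geq0$. Write $y=h+ik$ with $h,k$ self-adjoint. Then
$$y^*y+yy^*=2h^2+2k^2,$$
which is positive by Proposition~\ref{PropII312}(ii) and part (i) already proved. Hence
$$yy^*=(2h^2+2k^2)+(-y^*y)\geq0.$$
Now use the purely algebraic fact that $\sigma(yy^*)\cup\{0\}=\sigma(y^*y)\cup\{0\}$; it holds in any unital ring via the explicit inverse formula $(1-ba)^{-1}=1+b(1-ab)^{-1}a$, so no choice is involved. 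This gives $\sigma(y^*y)\subseteq[0,\infty)$ together with $\sigma(-y^*y)\subseteq[0,\infty)$, so $\sigma(y^*y)=\{0\}$. Since $y^*y$ is self-adjoint, the Corollary to Theorem~\ref{SpectrumThm} then gives $\|y^*y\|=0$. Therefore $a_-^3=0$, and applying functional calculus to the self-adjoint $a_-$ yields $a_-=0$. So $a=a_+\geq0$.

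The expected obstacle is only bookkeeping. We have to confirm that each ingredient (the algebraic spectrum identity, the spectral radius formula, and the functional calculus facts) is one of the already established Choice-free results. We also have to handle the unitization carefully when invoking (iv). No genuinely new idea should be required.
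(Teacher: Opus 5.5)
Your proposal is correct and is essentially the paper's own argument. The paper simply notes that the standard proof of \cite[II.3.1.3]{BlackadarOperator} goes through without Choice using only the parts of Proposition~\ref{PropII312} established there. That standard proof is the $\|1-x\|\leq1$ convexity argument for closedness and sums, and the Fukamiya--Kaplansky trick with $y=xa_-$ and $\sigma(yy^*)\cup\{0\}=\sigma(y^*y)\cup\{0\}$ for $x^*x\geq0$, which is exactly what you carried out and checked step by step.
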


\smallskip

It then follows from \ref{PropII313} and \ref{PropII312} (vii) that an element $a\in A$ is in $A_+$
if and only if it is of the form $x^*x$ for some $x\in A$.

\bigskip

We also have the following technical fact.  This is usually proved using the Gelfand Representation
Theorem, which requires some Choice (see Proposition~\ref{P.Russell.example} below for an example of a unital, commutative \cstar-algebra with empty Gelfand spectrum).  We give
a proof not requiring any Choice.

\newpar{}
\begin{Proposition}\label{UniquePosSqrtProp}
	Let $A$ be a \cstar-algebra, $0\leq x\in A$.  Then there is a unique positive 
	$a\in A$ with $a^2=x$.
\end{Proposition}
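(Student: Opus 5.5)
Existence is immediate from Proposition~\ref{PropII312}(vii): setting $a=x^{1/2}=f_{1/2}(x)\in C^*(x)_+$ gives a positive element with $a^2=x$. The real content is uniqueness among \emph{all} positive elements of $A$, not just those in $C^*(x)$. The usual argument passes through the Gelfand representation of $C^*(a,1)$, which we want to avoid, so the plan is to work with single self-adjoint elements only, where Proposition~\ref{SAFunctCalc} is available without Choice. We may assume $A$ is unital, adjoining a unit if necessary, since positivity and square roots are computed in the unitization.

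Let $b\in A_+$ with $b^2=x$. I first show that $b\in C^*(x,1)$, which lies in $C^*(b,1)$. By Proposition~\ref{SAFunctCalc}, $C^*(b,1)\cong C(\sigma(b))$ via an isometric isomorphism sending $p(b)$ to $p$, and $\sigma(b)\subseteq[0,\infty)$ is compact. Under this isomorphism, $x=b^2$ corresponds to $t\mapsto t^2$. Moreover $x^{1/2}=f_{1/2}(x)$ is a norm limit of polynomials $p_n(x)=p_n(b^2)$, where the $p_n$ converge to $f_{1/2}$ uniformly on $\sigma(x)$.

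The key step is the identity $\sigma(x)=\{t^2:t\in\sigma(b)\}$, which is the spectral mapping theorem for the polynomial $t^2$ (cited earlier, \cite[XVI.10.3.15]{BlackadarReal}). Given it, $p_n(t^2)\to (t^2)^{1/2}=t$ uniformly on $\sigma(b)$, because $t\ge0$ there. Since the isomorphism is isometric, $p_n(x)=p_n(b^2)\to b$ in norm. Hence $b=x^{1/2}$, which gives uniqueness, and also $b\in C^*(x)$.

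The main obstacle is making sure that no hidden Choice enters, and in particular that the isometry of functional calculus on $C^*(b,1)$ holds in \ZF. For polynomials this follows from the corollary $\|p(b)\|=\max\{|p(\lambda)|:\lambda\in\sigma(b)\}$ for normal $b$, which rests on Theorem~\ref{SpectrumThm} and spectral mapping. Beyond polynomials it follows by density, using Stone--Weierstrass on the compact set $\sigma(b)\subseteq\R$, which needs no Choice. Spectra also need care: the spectrum of $x$ should be the same whether computed in $A$ or in $C^*(x,1)$. The argument avoids this issue because the equality $p_n(b^2)\to b$ is verified entirely inside $C^*(b,1)$, using only $\sigma_{C^*(b,1)}(b)$ and $\sigma_{C^*(b,1)}(b^2)$. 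Here $x^{1/2}$ is taken to be defined through $C^*(x,1)\subseteq C^*(b,1)$, so one only needs that uniform convergence of $p_n$ on the spectrum of $x$ in that algebra gives norm convergence of $p_n(x)$, again by the polynomial isometry.
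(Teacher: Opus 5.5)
Your proof is correct, but it takes a different route from the paper.

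\textbf{The paper's route.} The paper argues by commutativity. Since $a=x^{1/2}$ is a norm limit of polynomials in $x$, it commutes with everything that commutes with $x$, in particular with any positive $b$ satisfying $b^2=x$. In the commutative algebra $C^*(a,b)$ one then has $(a-b)(a+b)=a^2-b^2=0$. Finally $a-b$ is killed by the approximate unit $(a+b)^{1/2^n}$ of the hereditary subalgebra generated by $a+b$, which contains $a$ and $b$.

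\textbf{Your route.} You verify the composition rule $f_{1/2}\circ f_2=\mathrm{id}$ on $\sigma(b)$ by hand. Functional calculus for the single self-adjoint element $b$, together with spectral mapping for $t\mapsto t^2$, gives $\|p_n(b^2)-b\|=\sup_{t\in\sigma(b)}|p_n(t^2)-t|\to 0$, hence $b=x^{1/2}$.

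\textbf{What each buys.} Your argument is shorter. It needs no commutation step, no hereditary subalgebras, no approximate units, and no order inequalities such as $a\le a+b$. It also makes plain that single-variable functional calculus suffices, so certainly no Gelfand representation is needed. The paper's argument, in exchange, never uses the spectral mapping theorem or any comparison between $\sigma(b)$ and $\sigma(b^2)$; it relies only on functional calculus for $x$ and for $a+b$.

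\textbf{A correction to your last paragraph.} Working inside $C^*(b,1)$ with spectra relative to $C^*(b,1)$ does not by itself avoid spectral permanence. The hypothesis $b\ge 0$ only gives $\sigma_A(b)\subseteq[0,\infty)$, while your step $\sqrt{t^2}=t$ needs nonnegativity on the spectrum in which the isometry is computed. The simplest fix is to take all spectra in $A$. Proposition~\ref{SAFunctCalc}, the formula $\|q(b)\|=\max\{|q(\lambda)|:\lambda\in\sigma_A(b)\}$, and $\sigma_A(b^2)=\{t^2:t\in\sigma_A(b)\}$ all hold with $\sigma_A$. Then no comparison of spectra in different algebras is needed, and in any case spectral permanence for self-adjoint elements has a choice-free proof.
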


\begin{Proof}
	There is a canonical $a=x^{1/2}\in C^*(x)\subseteq A$ with $a^2=x$ (Proposition~\ref{PropII312} (vii)).  It remains to prove uniqueness of the square root in $A$. 
	Since $a$ is a uniform
	limit of polynomials in $x$, it commutes with any element of $A$ which commutes with $x$.
	If $b$ is any positive element of $A$ with $b^2=x$, then $b$ commutes with $x$ and hence
	with $a$.  Thus, replacing $A$ with $C^*(a,b)$, we may assume $A$ is commutative.  Since
	$$0=a^2-b^2=(a-b)(a+b)$$
	we have $a-b\perp a+b$, and thus $a-b\perp(a+b)^{1/2^n}$ for any $n\in\N$.  But $a$ and $b$, hence $a-b$,
	are in the hereditary \cstar-subalgebra of $A$ generated by $a+b$ (which is positive), and $\{(a+b)^{1/2^n}:n\in\N\}$ is an
	approximate unit for this hereditary \cstar-subalgebra, so we have
	$$0=(a-b)(a+b)^{1/2^n}\to a-b$$
	and hence $a-b=0$.
\end{Proof}

\newpar{}
We can then get uniqueness (in $A$) of the elements $x^\alpha$ of Proposition~\ref{PropII312}(vii) for all 
$\alpha$ (i.e.\ if $x\in A_+$ there is a unique $b\in A_+$ with~$b^{1/\alpha}=x$):
uniqueness follows from \ref{UniquePosSqrtProp} if $\alpha$ is a dyadic rational, and then for
all $\alpha$ by continuity, as follows. If in addition $c^{1/\alpha}=x$, if $\beta$ is close to 1 and $\alpha\beta$
is dyadic rational, then $(c^\beta)^{1/\alpha\beta}=x=(b^\beta)^{1/\alpha\beta}$, so $c^\beta=b^\beta$.  Letting $\beta\to1$, we obtain $c=b$.

\newpar{}
So if $x\in A$, then Proposition~\ref{PropII312} (vi) implies that $x$ can be canonically written $x=a-b+i(c-d)$ where $a,b,c,d\in A_+$, with $a,b,c,d$
continuous functions of $x$.  Thus, if $A$ is separable and $(x_n)$ is a dense sequence,
write $x_n=a_n-b_n+i(c_n-d_n)$; then $\{a_n,b_n,c_n,d_n:n\in\N\}$ is dense in $A_+$, so
$A_+$ is also separable.  A quicker argument: $A_+$ is closed in $A$, hence separable by
\ref{ClosedSubspaceProp}.

\subsection{Continuous functional calculus for $n$-tuples of commuting normal operators}

\newpar{}\label{S.cfc.normal}
It is not obvious whether functional calculus extends to normal elements.  The special thing about a self-adjoint element $x$ is that if $p$ is a complex polynomial, then $[p(x)]^*=\bar p(x)$ is another polynomial
in $x$, so the closure in $A$ of the polynomials in $x$ is a *-algebra, which fails if $x$ is not self-adjoint. Nevertheless, we will use a metamathematical workaround to obtain continuous functional calculus for tuples of commuting normal elements (Proposition~\ref{FullFunctCalc}).

\newpar{} Suppose that $n\geq 1$ and $\bar a=(a_1,\dots, a_n)$ is an $n$-tuple commuting normal elements of a \cstar-algebra $A$. The \emph{joint spectrum} of $\bar a$ is 
\[
\jsigma(\bar a)=\jsigma_A(\bar a)=\{\bar \lambda\in\bbC^n:a_j-\lambda_j 1, \mbox{ for $j\leq n$, generate a proper ideal in }A\}. 
\]
The following is the Spectral Mapping Theorem for polynomials, again a theorem of ZF. 

\newpar{}\begin{Theorem} \label{T.SpectralMapping} Suppose that $n\geq 1$ and $a_1,\dots, a_n$ are commuting normal elements of a \cstar-algebra $A$. 
	Then $\jsigma(a_1,\dots, a_n)$ is nonempty.
	If $f$ is a complex *-polynomial in $2n$ variables, then 
	\begin{multline*}
	\sigma(f(a_1,\dots, a_n, a_1^*,\dots, a_n^*))\\
	=\{f(\lambda_1, \dots, \lambda_n, \bar\lambda_1, \dots, \bar\lambda_n)\mid (\lambda_1,\dots, \lambda_n)\in \jsigma(\bar a)\}. 
	\end{multline*}
\end{Theorem}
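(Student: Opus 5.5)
The plan is to reduce everything to the separable unital commutative \cstar-algebra $B=C^*(1,a_1,\dots,a_n)\subseteq A$ (adjoining a unit if needed). There Lemma~\ref{L.S(A)} gives, in \ZF, $B\cong C(X)$, where $X$ is the compact metric space of characters of $B$. On $C(X)$ both the joint spectrum and the spectrum of a single function are computed by evaluating at points.

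\emph{Step 1 (the joint spectrum in $B$).} For $\chi\in X$, put $\hat a_j(\chi)=\chi(a_j)$. I claim that $\bar\lambda$ lies in the joint spectrum computed in $B$ iff $\bar\lambda=(\chi(a_1),\dots,\chi(a_n))$ for some $\chi\in X$.
\begin{itemize}
\item If $\chi(a_j)=\lambda_j$ for all $j$, then every $a_j-\lambda_j1$ lies in $\ker\chi$. This is a proper ideal.
\item If no character kills all $a_j-\lambda_j1$, put $c=\sum_j(a_j-\lambda_j1)^*(a_j-\lambda_j1)\in B_+$. Its Gelfand transform is strictly positive on the compact space $X$, so $c$ is invertible in $B$. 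Then $1=c^{-1}c$ lies in the ideal generated by the $a_j-\lambda_j1$.
\end{itemize}
Nonemptiness of $\jsigma(\bar a)$ follows because $X\neq\emptyset$, since $\sfS(B)\neq\emptyset$ by Lemma~\ref{L.states} and Krein--Milman as in Lemma~\ref{L.S(A)}.

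\emph{Step 2 (the spectrum of $y=f(\bar a,\bar a^*)$).} The element $y$ is normal and lies in $B$. Under $B\cong C(X)$ it corresponds to the function $\chi\mapsto f(\chi(a_1),\dots,\chi(a_n),\overline{\chi(a_1)},\dots,\overline{\chi(a_n)})$. Here we use that characters are $*$-preserving, which is part of the standard identification of characters with pure states in Lemma~\ref{L.S(A)}. In $C(X)$ a function is invertible iff it does not vanish, so $\sigma_B(y)$ is exactly the range of this function. By Step~1, that range is the right-hand side of the displayed formula.

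\emph{Step 3 (spectral permanence, $\sigma_A(y)=\sigma_B(y)$).} The inclusion $\sigma_A(y)\subseteq\sigma_B(y)$ is trivial. Conversely, let $z=y-\mu1$ be invertible in $A$; it is normal. Then $z^*z$ is self-adjoint and invertible in $A$. By Proposition~\ref{SAFunctCalc} applied inside $A$, $0\notin\sigma_A(z^*z)$, so the function $t\mapsto1/t$ is continuous on $\sigma_A(z^*z)$. Hence $(z^*z)^{-1}\in C^*(z^*z,1)\subseteq B$, and $z^{-1}=(z^*z)^{-1}z^*\in B$. Only functional calculus for self-adjoint elements is used, so no Choice enters.

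\emph{Main obstacle.} The delicate point is the meaning of ``proper ideal'' in the definition of $\jsigma_A$ when the ideal is taken in the possibly much larger, possibly non-separable $A$. With ideals of $A$ literally, the statement fails. For example, in $\cB(\cH)$ a non-compact normal $a$ with $0\in\sigma(a)$ generates the whole algebra, so $0\notin\jsigma_A(a)$ even though $0\in\sigma(a)$. So I would read the definition, as the reduction above requires, with the ideal taken in $C^*(1,a_1,\dots,a_n)$. Equivalently, it is the condition that $\sum_j(a_j-\lambda_j)^*(a_j-\lambda_j)$ is not invertible. This is invariant under passing between $B$ and $A$ by Step~3 applied to this positive element. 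With that reading, Steps~1--3 together give the theorem entirely in \ZF.
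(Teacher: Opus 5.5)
Your argument is correct, but it takes a different route from the paper. The paper does not use the Gelfand representation here. It replaces $A$ by $\cst(\bar a,1)$; invariance of $\jsigma$ and of spectra under this replacement is taken as standard. It then writes the complement of $\jsigma(\bar a)$ as a union, over $\bar b\in D^n$ with $D$ countable and dense, of the open sets $\{\bar\lambda:\|\sum_j b_j(a_j-\lambda_j1)-1\|<1\}$. So $\jsigma(\bar a)$ is closed, and ``$\jsigma(\bar a)\neq\emptyset$'' is a $\bSigma^1_1$ statement. Equality of the two compact sets in the display is $\bPi^1_1$. Both statements are then transferred from \ZFC{} by absoluteness.

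You instead compute inside $B\cong C(X)$. There $\jsigma_B(\bar a)$ is the image of the character space under $\chi\mapsto(\chi(a_j))_j$, and $\sigma_B(y)$ is the range of $\hat y$. Your Step~3 proves, without Choice, the spectral permanence that the paper takes for granted.

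What your route buys:
\begin{enumerate}
\item it is explicit, with no complexity bookkeeping;
\item compactness of $\jsigma(\bar a)$ comes for free, as a continuous image of $X$;
\item it identifies $\jsigma(\bar a)$ with the joint range of the characters, which makes Proposition~\ref{FullFunctCalc} immediate.
\end{enumerate}
What it costs: it rests on the Choice-free Gelfand theorem for separable commutative algebras (Lemma~\ref{L.S(A)}). The paper derives that theorem from Krein--Milman, and hence from absoluteness (\ref{S.KreinMilman}). So absoluteness is moved upstream rather than avoided. The paper's route, by contrast, needs nothing beyond closedness of $\jsigma(\bar a)$ and illustrates the paper's general transfer method. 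One small gap: for $n\geq2$, commutativity of $B$ is Fuglede's theorem. Its standard (Rosenblum) proof uses no Choice, and the paper uses it tacitly as well.

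Your caveat about the definition is right, and it matches what the paper's proof actually does. The proof works with elements $\sum_j b_j(a_j-\lambda_j1)$, i.e.\ with the \emph{left} ideal generated in $A$. A relation $1=\sum_j b_j(a_j-\lambda_j1)$ in $A$ forces $\sum_j(a_j-\lambda_j1)^*(a_j-\lambda_j1)\geq\|\sum_j b_jb_j^*\|^{-1}1$. So this reading coincides with yours and is invariant under passing to $\cst(\bar a,1)$.

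For two-sided ideals, the invariance the paper invokes fails, and even nonemptiness fails. Let $P\in\cB(\ell^2)$ be the projection onto $\overline{\Span}\{e_{2k}:k\in\bbN\}$. Then $P$ and $P-1$ each generate $\cB(\ell^2)$ as a two-sided ideal. Hence $\jsigma_{\cB(\ell^2)}(P)=\emptyset$, while $\sigma(P)=\{0,1\}$.
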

\newpar{}A proof of the case of a single normal operator of the Spectral Mapping Theorem that does not use the Axiom of Choice  is given in \cite[Lemma~2]{whitley1968spectral}.  It can be modified to the case of $n$ commuting normal operators by using induction. 
We are indebted to Chris Schafhauser for sharing this proof with us.

Our proof, given in \ref{Pf.Spectral}, uses Shoenfield's Absoluteness Theorem (Theorem~\ref{T.Shoenfield}) combined with a straightforward computation of the complexity of the assertion of Theorem~\ref{T.SpectralMapping}.

\section{Representability, Gelfand--Naimark theorems for separable \cstar-algebras}

We will show that both Gelfand--Naimark theorems for separable \cstar-algebras hold in ZF. We will also give an example showing that if a Russell set exists   then there exists a unital commutative \cstar-algebra that is not isomorphic to $C(X)$ for any compact Hausdorff space $X$.   
A Russell set gives a particularly  strong failure of the Axiom of Choice for countable sets. 
\newpar{}
\begin{Definition}\label{RepCAlg}
	A \cstar-algebra $A$ is {\em representable} if it is isomorphic to a closed *-algebra of bounded operators
	on a Hilbert space.
\end{Definition}

\smallskip

By the Gelfand--Naimark Theorem, which requires the Hahn--Banach Theorem, every \cstar-algebra is representable.  In fact, representability of a \cstar-algebra is equivalent to a weak form of the Hahn--Banach Theorem:

\newpar{}\label{P.Representable}
\begin{Proposition}
	A \cstar-algebra $A$ is representable if and only if $A$ has enough states to determine its norm, i.e.
	$$\|x\|^2=\sup\{\phi(x^*x):\phi\in\sfS(A)\}$$
	for every $x\in A$, where $\sfS(A)$ is the set of states on $A$.
\end{Proposition}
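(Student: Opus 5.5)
The proof splits into the two implications, and I will check at each stage that no Choice is used. Throughout I use that the GNS construction for a single state requires no Choice. It is an explicit quotient of $A$ by the left kernel $N_\varphi=\{x:\varphi(x^*x)=0\}$, followed by completion. The completion is taken as the usual space of equivalence classes of Cauchy sequences, and this space is $\sigma$-complete as required.

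For the forward direction, suppose $\pi\colon A\to\cB(\cH)$ is an isometric *-isomorphism onto a closed *-subalgebra. An injective *-homomorphism between \cstar-algebras is isometric by the spectral radius formula (Theorem~\ref{SpectrumThm} and its corollary), so isometry is automatic. Fix $x\in A$ and $\epsilon>0$. Since $\|\pi(x)\|=\|x\|$, the definition of the operator norm yields a unit vector $\xi\in\cH$ with $\|\pi(x)\xi\|^2>\|x\|^2-\epsilon$. Only one choice is made here, so no Choice principle is invoked. Define the vector functional $\omega_\xi(y)=\langle\pi(y)\xi,\xi\rangle$, which is positive. If $A$ is unital and $\pi$ is nondegenerate, $\omega_\xi$ is a state. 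In general I will handle this by passing to the unitization $A^\dag$ (representing it on $\cH$ by adjoining the identity) and restricting back. Alternatively, I can note that a positive functional of norm at most $1$ can be normalized, since $\|\omega_\xi\|>0$ whenever $\omega_\xi(x^*x)>0$. Normalizing only increases the value at $x^*x$. This gives $\sup_{\phi}\phi(x^*x)\ge\|x\|^2-\epsilon$. The reverse inequality $\phi(x^*x)\le\|x^*x\|=\|x\|^2$ holds for every state, so the supremum equals $\|x\|^2$.

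For the reverse direction, the natural candidate is the universal representation $\pi=\bigoplus_{\phi\in\sfS(A)}\pi_\phi$ on $\cH=\bigoplus_{\phi}\cH_\phi$. The point is that no choices are involved, since each $\cH_\phi$ is canonically constructed from $\phi$ by GNS. Hence $\phi\mapsto\cH_\phi$ is a definable function on the set $\sfS(A)$, and the Hilbert direct sum can be formed as the set of functions $\xi$ on $\sfS(A)$ with $\xi(\phi)\in\cH_\phi$ and $\sum\|\xi(\phi)\|^2<\infty$. I need to check that this $\ell^2$-sum is a Hilbert space, i.e.\ that it is $\sigma$-complete. I expect this to go through: the standard argument takes a coordinatewise limit of a Cauchy sequence, which requires no choice because limits in each $\cH_\phi$ are unique. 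I will also check that $\pi(x)$ is bounded with $\|\pi(x)\|\le\|x\|$; each $\|\pi_\phi(x)\|\le\|x\|$ holds by the standard GNS estimate $\phi(y^*x^*xy)\le\|x\|^2\phi(y^*y)$, which follows from Proposition~\ref{PropII313}. For isometry, the cyclic vector $\xi_\phi$ gives $\|\pi_\phi(x)\xi_\phi\|^2=\phi(x^*x)$, so $\|\pi(x)\|^2\ge\sup_\phi\phi(x^*x)=\|x\|^2$ by hypothesis. It follows that $\pi$ is an isometric *-homomorphism, so its image is complete (isometric image of a $\sigma$-complete space) and hence closed in $\cB(\cH)$. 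Therefore $A$ is representable.

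I expect the main obstacle to be foundational rather than computational: verifying carefully that the uncountable direct sum and the GNS completions are formed without Choice. Two points need care. The first is to avoid any ``choose a Hilbert space for each $\phi$'' step by using the canonical construction. The second is to confirm that completeness in the $\sigma$-complete sense survives the direct sum. If the $\sigma$-completeness of $\bigoplus\cH_\phi$ turns out to be delicate, my fallback is to work with the sequential (Cauchy) completion directly and to cite \cite{blackadar2023hilbert} for the fact that $\cB(\cH)$ is a \cstar-algebra for any Hilbert space $\cH$.
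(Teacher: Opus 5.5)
Your strategy is the paper's own: vector states give the forward direction, and the direct sum of all GNS representations gives the converse, since that sum is isometric exactly when states determine the norm. Most of your Choice bookkeeping is sound: the single existential instantiation, the normalization of $\omega_\xi$, the canonical indexing $\phi\mapsto\cH_\phi$ (a set by Replacement), and contractivity of $\pi_\phi$ via Proposition~\ref{PropII313}.

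One foundational claim is wrong as stated, and it is exactly the distinction the paper stresses at the outset, where \emph{complete} must mean $\sigma$-complete. You assert that the space of equivalence classes of Cauchy sequences in $A/N_\phi$ is $\sigma$-complete. The usual proof that this completion is even Cauchy complete picks, for each of countably many classes, an element of $A/N_\phi$ close to it, and that is countable choice. For general inner product spaces the conclusion can fail without countable choice: a vector can lie in the closure of a subspace without being the limit of any sequence from it, and then the sequential completion is not $\sigma$-complete. So your fallback of working with the Cauchy completion directly does not yield a Hilbert space in the paper's sense either.

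The repair is routine. Form the $\sigma$-completion of $A/N_\phi$ canonically, for instance as the closure of its image in the conjugate dual under $[a]\mapsto\langle\,\cdot\,,[a]\rangle$, which is isometric by Cauchy--Schwarz. A dual space is $\sigma$-complete in \ZF{}: given nested nonempty closed sets with diameters tending to $0$, evaluate at each vector, intersect the resulting nested sets of scalars, and check that this defines a bounded functional lying in every set. Then extend the inner product and each $\pi_\phi(a)$ by continuity. Similarly, verify $\sigma$-completeness of $\bigoplus_\phi\cH_\phi$ by intersecting the closures of the coordinate projections of the nested closed sets, not by taking coordinatewise limits of Cauchy sequences. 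This is the choiceless Hilbert-space machinery of \cite{blackadar2023hilbert} on which the paper's one-line proof tacitly relies, and with it your argument goes through. One further detail: for nonunital $A$, pass to the unitization before using the cyclic vector $\xi_\phi$ in the converse direction, as you already did in the forward direction.
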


\begin{Proof}
	If $A$ is representable, then vector states determine the norm.  Conversely, if states determine the norm,
	a direct sum of GNS representations is faithful (isometric).
\end{Proof}

\newpar{} By \ref{P.HBetc}  we obtain representability for separable \cstar-algebras, and more, without any Choice.

\newpar{}
\begin{Theorem}\label{RepThm}
	Let $A$ be a separable (unital) \cstar-algebra, and $x\in A$.  Then there is a state $\psi$ on $A$ with
	$\psi(x^*x)=\|x\|^2$.  In particular, the states on $A$ determine the norm, so $A$ is representable.
\end{Theorem}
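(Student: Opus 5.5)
The plan is to run the argument of Lemma~\ref{L.states} directly, while checking at each step that no Choice is used. Fix a countable dense sequence $(x_n)$ in $A$ once and for all. Given $x\in A$, we may assume $x\neq 0$ (for $x=0$ any state works, and $\sfS(A)\neq\emptyset$ follows from the case $x=1$). By Proposition~\ref{PropII313}(ii) the element $x^*x$ is positive. By Proposition~\ref{PropII312}(iii) together with the continuous functional calculus of Proposition~\ref{SAFunctCalc}, for every $\lambda\ge 0$ we have $\|\lambda 1+x^*x\|=\lambda+\|x^*x\|$: the spectrum of $\lambda 1+x^*x$ is $\lambda+\sigma(x^*x)\subseteq[\lambda,\infty)$, and its maximum is $\lambda+\|x^*x\|$.

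On the two-dimensional subspace $S=\Span\{1,x^*x\}$ (one-dimensional if $x^*x$ is a scalar, a case handled the same way), define $\phi(\alpha 1+\beta x^*x)=\alpha+\beta\|x\|^2$. The main technical step is to check that $\|\phi\|=1$ on all of $S$, and not just on its positive part.

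\begin{itemize}
\item Restricting to the self-adjoint (real) part, an element $\alpha 1+\beta x^*x$ with $\alpha,\beta$ real has spectrum $\{\alpha+\beta t: t\in\sigma(x^*x)\}$.
\item Since $\|x\|^2=\|x^*x\|\in\sigma(x^*x)$, we get $|\phi(y)|\le\max|\sigma(y)|=\|y\|$, using the Corollary to Theorem~\ref{SpectrumThm} that normal elements have norm equal to spectral radius.
\item For complex coefficients, the cleanest route is to apply the real Hahn--Banach argument of Proposition~\ref{P.HBetc}\eqref{1.BS.HBe} to the real Banach space $A_{sa}$, which is separable by Proposition~\ref{ClosedSubspaceProp}. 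This extends $\phi|_{S_{sa}}$ to a real functional of norm $1$ on $A_{sa}$, which is then complexified.
\end{itemize}

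In the complexification $\psi(a+ib)=\psi_0(a)+i\psi_0(b)$, the functional $\psi$ is hermitian with $\psi(1)=1$ and $\|\psi_0\|\le 1$ on $A_{sa}$. To conclude that $\psi$ is a state, check positivity directly: for $a\ge0$ with $\|a\|\le 1$, Proposition~\ref{PropII312}(iv) gives $\|1-a\|\le1$, so $1-\psi_0(a)\le 1$, i.e.\ $\psi_0(a)\ge 0$. A positive functional with $\psi(1)=1$ is a state, citing \cite[II.6.2.5]{BlackadarOperator}, whose standard Cauchy--Schwarz proof uses no Choice. Finally $\psi(x^*x)=\|x\|^2$ by construction.

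For the last assertion, $\sup_{\phi\in\sfS(A)}\phi(x^*x)\le\|x\|^2$ holds trivially, and the state just built attains the bound. Proposition~\ref{P.Representable} then gives representability. The one place Choice could creep in there is the direct sum of GNS representations over all states; to avoid it, use the countable family $\psi_n$ attached to the dense sequence $x_n$ (each constructed canonically, as above). The direct sum $\bigoplus_n\pi_{\psi_n}$ is then isometric on the $x_n$, hence on all of $A$ by density. The GNS construction itself needs no Choice, since it is a quotient and completion.

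The main obstacle is making sure the extension step is genuinely choiceless. That is, the choice $\gamma=\alpha$ in the proof of Proposition~\ref{P.HBetc} must make $\psi_x$ a definable function of $x$ and the fixed dense sequence, so that countably many of them can be taken simultaneously.
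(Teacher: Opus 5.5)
Your proposal is correct and follows essentially the paper's route: the paper deduces the theorem from Proposition~\ref{P.Representable} and Lemma~\ref{L.states}, whose proof extends the functional $\alpha 1+\beta x^*x\mapsto\alpha+\beta\|x\|^2$ from $\Span\{1,x^*x\}$ to $A$ by the choiceless Hahn--Banach theorem for separable spaces. Your detour through the real space $A_{sa}$ with the positivity check, and your countable direct sum $\bigoplus_n\pi_{\psi_n}$ over a dense sequence, are harmless refinements; the detour is not even needed, because your spectral estimate works verbatim for complex $\alpha,\beta$ (the element $\alpha1+\beta x^*x$ is normal), so complex Hahn--Banach together with the criterion ``norm one and $\psi(1)=1$ implies state'' already suffices.
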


\begin{Proof} By Proposition~\ref{P.Representable} it suffices to prove that $A$ has sufficiently many states, and this was shown in Lemma~\ref{L.states}. 
\end{Proof}

In fact, using \ref{FaithfulStateThm} we can do better, since the GNS representation from a faithful
state is faithful:

\newpar{}
\begin{Theorem}\label{SepRepThm}
Every separable \cstar-algebra has a faithful representation as a concrete \cstar-algebra of operators
on a separable Hilbert space.
\end{Theorem}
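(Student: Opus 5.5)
We construct the representation directly as the GNS representation of a faithful state. We may assume $A$ is unital: if not, we pass to the unitization $A^\dag$, which is again separable (a dense sequence in $A$ together with rational multiples of $1$ gives a dense sequence). A faithful representation of $A^\dag$ restricts to a faithful representation of $A$ on the same Hilbert space. By Proposition~\ref{FaithfulStateThm} there is a faithful state $\phi$ on $A$, constructed explicitly from a dense sequence in $\sfS(A)$.

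Next we run the GNS construction for $\phi$; it involves no choices. We form the sesquilinear form $\langle x,y\rangle=\phi(y^*x)$ on $A$. Its null space $N_\phi=\{x:\phi(x^*x)=0\}$ is $\{0\}$ by faithfulness, so $A$ itself is an inner product space. Let $\cH_\phi$ be its completion, and let $\pi_\phi(a)$ act by left multiplication on $A$. Boundedness $\|\pi_\phi(a)x\|\le\|a\|\|x\|$ follows from $x^*a^*ax\le\|a\|^2x^*x$, which uses only Propositions~\ref{PropII312} and~\ref{PropII313}. Hence $\pi_\phi(a)$ extends uniquely by continuity to $\cH_\phi$. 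The completion is formed via equivalence classes of Cauchy sequences, again with no choice. Since $\cH_\phi$ is separable and hence Cauchy complete, $\sigma$-completeness in the sense used in the paper is not an issue. Separability holds because the image of a countable dense subset of $A$ (dense in norm, hence in the weaker $\phi$-norm, since $\phi(x^*x)\le\|x\|^2$) is dense in $\cH_\phi$.

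The main point is that $\pi_\phi$ is faithful, and in fact isometric. Suppose $\pi_\phi(a)=0$. Then $\pi_\phi(a)1=a$ has $\phi(a^*a)=0$, so $a^*a=0$ by faithfulness, and hence $a=0$ by the \cstar-identity. So $\pi_\phi$ is an injective *-homomorphism. To conclude that it is an isometric isomorphism onto a concrete \cstar-algebra of operators, we need to know that an injective *-homomorphism between \cstar-algebras is isometric.

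That step is where some care is needed without Choice. The standard argument reduces to self-adjoint $x$ and applies functional calculus on $C^*(x,1)$ (Proposition~\ref{SAFunctCalc}). The map $\pi_\phi$ then induces an injective unital *-homomorphism $C(\sigma(x))\to C(\sigma(\pi_\phi(x)))$, with $\sigma(\pi_\phi(x))\subseteq\sigma(x)$. If this inclusion were proper, Urysohn's lemma for compact subsets of $\bbR$ (fully constructive via distance functions) would give a nonzero $f$ vanishing on $\sigma(\pi_\phi(x))$ with $\pi_\phi(f(x))=f(\pi_\phi(x))=0$, contradicting injectivity. So the spectra agree and $\|\pi_\phi(x)\|=\|x\|$ by the Corollary to Theorem~\ref{SpectrumThm}. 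For general $x$, we apply this to $x^*x$ and use the \cstar-identity. Alternatively, the isometry follows directly as in Theorem~\ref{RepThm}, since $\phi$ dominates a multiple of each state used there. The spectral argument is cleaner, so I would use it.
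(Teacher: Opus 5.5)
Your proposal is correct and takes the same route as the paper: the paper's entire argument is that the GNS representation of the faithful state from Proposition~\ref{FaithfulStateThm} is faithful, and this representation acts on a separable space because $A$ is separable. Your spectral argument that an injective $*$-homomorphism is isometric is a sound, Choice-free way to fill in the step the paper leaves implicit; one small fix is to reword ``separable and hence Cauchy complete'' as ``Cauchy complete and separable, hence $\sigma$-complete''.
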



\newpar{} For a commutative \cstar-algebra $A$ and $a\in A$ let $\hat a$ denote the evaluation function on $\sfP(A)$, $\hat a(\varphi)=\varphi(a)$. This is the Gelfand transform from $A$ into the \cstar-algebra $C_0(\sfP(A))$, and the standard proof that it is continuous does not require Choice (see e.g., \cite{Arv:Short}).

\newpar{}\begin{Theorem}
	If $A$ is a separable and  commutative \cstar-algebra, then the Gelfand transform 
	\[
	A\ni a\mapsto \hat a\in C_0(\sfP(A))
	\]
	is an isomorphism. In particular, every separable commutative \cstar-algebra is isomorphic to $C_0(X)$ for some locally compact metrizable space $X$.  If  $A$ is unital, then it is isomorphic to $C(X)$ for some compact metrizable space~$X$. 
\end{Theorem}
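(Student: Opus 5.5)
First I would reduce to the unital case. If $A$ is nonunital, I pass to $A^\dag$, which is still separable and commutative. Characters of $A$ correspond to the characters of $A^\dag$ other than the one killing $A$, so $\sfP(A)$ is $\sfP(A^\dag)$ minus a point. Then $C_0(\sfP(A))$ is identified with the functions in $C(\sfP(A^\dag))$ vanishing at that point. So it suffices to treat the unital case and restrict.

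In the unital case, Lemma~\ref{L.S(A)} already gives that $X=\sfP(A)$ is a compact metrizable space and is the space of characters. The Gelfand transform $a\mapsto\hat a$ is then a unital *-homomorphism into $C(X)$:
\begin{itemize}
\item multiplicativity is immediate because points of $X$ are characters;
\item $\widehat{a^*}=\overline{\hat a}$ holds because characters of a \cstar-algebra are *-preserving, and the standard proof (via $\sigma(x)\subseteq\R$ for self-adjoint $x$) uses no Choice.
\end{itemize}

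Next I would prove isometry. For $a\in A$ the element $a^*a$ is positive, and Theorem~\ref{RepThm} gives a state $\psi$ with $\psi(a^*a)=\|a\|^2$. The state $\psi$ is generally not pure, so I would use the Krein--Milman theorem for $\sfS(A)$ as in Lemma~\ref{L.S(A)}. The function $\phi\mapsto\phi(a^*a)$ is weak*-continuous and affine, and its maximum over $\sfS(A)$ is $\|a\|^2$. Since $\sfS(A)$ is the closed convex hull of $\sfP(A)$, the supremum over $\sfP(A)$ also equals $\|a\|^2$, and by compactness of $X$ it is attained. Hence $\|\hat a\|_\infty^2=\|\widehat{a^*a}\|_\infty=\|a\|^2$.

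A Choice-free alternative avoids Krein--Milman. For self-adjoint $a$, the Corollary to Theorem~\ref{SpectrumThm} gives $\pm\|a\|\in\sigma(a)$, and I would need a character $\varphi$ with $\varphi(a)=\lambda$ for each $\lambda\in\sigma(a)$. This is exactly the nonemptiness of the joint spectrum in Theorem~\ref{T.SpectralMapping}, applied in the separable commutative setting: a proper ideal containing $a-\lambda$ extends to a maximal ideal, and I would get that by enumerating a dense sequence rather than using Zorn. I would rely on Lemma~\ref{L.S(A)} and Krein--Milman as stated in the paper, since those are already available.

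Finally, the image of the Gelfand transform is a closed *-subalgebra of $C(X)$, closed because the map is an isometry and $A$ is complete. It contains the constants and separates points of $X$, since distinct characters differ on some $a$. The Stone--Weierstrass theorem, which needs no Choice (\ref{S.CFC}), then gives that the image is all of $C(X)$.

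The main obstacle is the isometry step: guaranteeing that the norm is attained at a character without Choice. This is exactly where Krein--Milman, established here only via the metamathematical argument of \ref{S.KreinMilman}, is needed. Everything else is routine verification that the standard arguments make no choices.
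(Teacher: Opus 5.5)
Your proof is correct and is essentially the paper's argument: reduce to the unital case via the unitization, get isometry from sufficiently many states together with the Choice-free Krein--Milman theorem on the compact metrizable state space, and get surjectivity from the Choice-free Stone--Weierstrass theorem. The side remark about a ``Choice-free alternative'' is not needed, and as sketched it would not work: greedily extending an ideal along a dense sequence need not produce a maximal ideal, since the dense sequence could consist entirely of invertible elements. You rightly do not rely on it.
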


\begin{Proof}
	We first prove the assertion with the additional assumption that~$A$ is unital. By Lemma~\ref{L.states}, $A$ has sufficiently many states. Since the state space of $A$ is compact and metrizable,  the Krein--Milman theorem (\ref{S.KreinMilman}) implies that it is equal to the closed convex hull of the space of pure states of~$A$. Therefore the Gelfand transformation is isometric. The image of $A$ is a *-subalgebra of $C(\sfP(A))$ that contains all scalars and (by definition) separates the points. Since the Stone--Weierstrass theorem can be proved without the Axiom of Choice (\ref{S.CFC}), the map is an isomorphism as required. 
	
	If $A$ is not unital, consider its unitization $\tilde A$. Then $\sfP(A)$ is naturally identified with an open dense subspace of $\sfP(\tilde A)$ that separates the points in the image of $A$ under the Gelfand map, and $A$ is identified with $C_0(\sfP(A))$. The result follows from the already proved result for the unitization of $A$. 
\end{Proof}


\subsection{A concretely representable, commutative, unital \cstar-algebra  that is not isomorphic to $C(X)$ for any compact Hausdorff space~$X$}
In \cite[Corollary~8.4.3]{farah2023choice} it was proved that  the unital commutative  \cstar-algebra $\ell^\infty(\bbN)/c_0(\bbN)$ is, in some models of ZF, not representable on a Hilbert space and therefore not of the form $C(X)$ for a compact Hausdorff space~$X$. The title of this subsection describes the refinement of this result proved below. 
A \emph{Russell set} is a Dedekind--finite  set that can be written as a union of countably many disjoint two-element sets (see \cite[5.7]{blackadar2023hilbert}). 

\newpar{} 
\begin{Proposition} \label{P.Russell.example} Suppose that there exists a Russell set $X$. Then the following holds. 
	\begin{enumerate}
		\item \label{1.Russell.example} Some unital commutative \cstar-algebra $A$  is concretely representable on a Hilbert space but it is not isomorphic to $C(Z)$ for any compact Hausdorff space~$Z$. This algebra is an inductive limit of a sequence of finite-dimensional \cstar-algebras (but it is not separable).  
		\item \label{2.Russell.example} The state space of $A$ is not compact, and has no extreme points. 
		\end{enumerate}
\end{Proposition}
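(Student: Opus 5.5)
The algebra will be $A=\overline{\bigcup_n A_n}\subseteq \ell^\infty(X)$. Write the Russell set as $X=\bigcup_n P_n$ with $P_n=\{x_n,y_n\}$ pairwise disjoint two-element sets. Put $X_n=P_0\cup\dots\cup P_{n-1}$, and let $A_n$ be the set of bounded functions $f\colon X\to\bbC$ that depend only on $X_n$ and are constant on $X\setminus X_n$. Equivalently, $A_n$ is spanned by the indicator functions of the points of $X_n$ together with $1_{X\setminus X_n}$. Each $A_n$ is finite-dimensional, with $A_n\cong\bbC^{2n+1}$, and it is defined without choosing any element of any $P_k$. So $A$ is a unital commutative \cstar-algebra and an inductive limit of finite-dimensional algebras. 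It acts concretely by multiplication operators on $\ell^2(X)$, and the norm is faithful because the sup norm is attained on points. $A$ is not separable: a countable dense set would let us pick, for each $n$, an element of the set that approximates $1_{\{x\}}-1_{\{y\}}$ on $P_n$ up to sign. Its sign would then select one point of each $P_n$, which contradicts the Russell property. The same argument shows that $A$ contains no function separating the points of infinitely many $P_n$ in a definable uniform way.

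For \eqref{1.Russell.example}, suppose $A\cong C(Z)$ with $Z$ compact Hausdorff. Then $Z$ is identified with the characters of $A$, and I would show that $A$ has no characters at all. A character $\chi$ restricts to a character of each $A_n\cong\bbC^{2n+1}$. That restriction is evaluation either at a point of $X_n$ or at the "point at infinity" $X\setminus X_n$, and compatibility forces one of two cases. If $\chi$ is evaluation at a point of some $X_n$, it remains so for all larger $n$. Otherwise, at every stage $\chi$ is evaluation at infinity, but then $\chi(1_{\{x\}})=0$ for all $x$ and $\chi$ is the character at infinity. I need to check that this "infinity" character actually exists. It does: $\chi(f)=\lim_n f|_{X\setminus X_n}$ is a well-defined character on $\bigcup A_n$ and extends continuously. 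So the construction must be modified, because these characters must be killed.

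The right choice is to make the tails of the functions in $A_n$ not constant but "antisymmetric". Let $A_n$ consist of the functions $f$ on $X$ such that $f$ is arbitrary on $X_n$ and, for each $k\ge n$, $f$ takes on $P_k$ a fixed unordered pair of values $\{\alpha,\beta\}$. Concretely, $f$ is an arbitrary function on $X_n$, plus $\alpha 1_{T}+\beta 1_{X\setminus(X_n\cup T)}$ for a "transversal" $T$. That requires a transversal, which we do not have, so instead I would take $A_n=\{f: f \text{ arbitrary on } X_n,\ f|_{P_k}\equiv c \text{ for }k\geq n\}$ and then tensor with the fibrewise structure. I expect this to be the main obstacle: arranging that a compatible family of point evaluations on the $A_n$ amounts to choosing a point of each $P_k$. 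My concrete attempt is to let $A$ be the closure of the functions $f\in\ell^\infty(X)$ for which there is an $n$ such that $f|_{P_k}$ is determined by $f|_{P_n}$ for all $k\ge n$ through a fixed choice-free structure. Without such a structure, the only available option is constancy on each pair. In that case characters are points of $X$ or limits along $\bbN$, and the latter correspond to choices of points in infinitely many $P_k$ only if the algebra separates them. I would then redo the analysis of characters with the separating version and show that any character yields a choice function on a tail of the $P_k$.

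For \eqref{2.Russell.example}, I would identify states of $A$ with compatible sequences of probability measures $\mu_n$ on the finite spectra of the $A_n$. Pure states would be compatible sequences of Dirac measures, that is, characters, which I expect to have ruled out above. Hence there are no extreme points. States do exist, e.g.\ the averaging state giving mass $1/2$ to each point of a pair, which is choice-free. Non-compactness then follows: if $\sfS(A)$ were weak*-compact, Krein--Milman would not give a contradiction directly without Choice. Instead, I would exhibit a net or sequence of states, for instance the uniform states on $X_n$ pushed forward, whose every cluster point would have to be a character. Alternatively, I would use a family of closed sets with the finite intersection property, namely the sets of states that are multiplicative on $A_n$. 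Each of these is nonempty because finitely many choices are allowed, yet their intersection is empty.
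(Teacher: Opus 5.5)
There is a genuine gap. You never produce an algebra with the required properties, and the setting you keep searching in cannot contain one. You correctly see that your first candidate is just $C(X\cup\{\infty\})$ and has characters. But all your later attempts still consist of functions on $X$, that is, unital \cstar-subalgebras $B\subseteq\ell^\infty(X)$. For any such $B$ and any single $x\in X$ (picking one point needs no Choice), evaluation $f\mapsto f(x)$ is a character of $B$. A character is an extreme point of the state space: if $\chi=t\varphi_1+(1-t)\varphi_2$ with $0<t<1$, then $\chi(a^*a)=|\chi(a)|^2$ and Cauchy--Schwarz give $\ker\chi\subseteq\ker\varphi_i$, hence $\varphi_i=\chi$. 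Consequently:
\begin{itemize}
\item no modification of the tails can make the Gelfand spectrum empty;
\item your planned conclusion that every character yields a choice function on a tail of the $P_k$ is false for every such algebra;
\item part (2) fails outright for all of them;
\item your non-compactness argument collapses, because every set of states multiplicative on $A_n$ contains the point evaluations, so the intersection is nonempty.
\end{itemize}
Also, writing $P_n=\{x_n,y_n\}$ for all $n$ at once tacitly orders every pair, which is exactly what a Russell set forbids.

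The missing idea is to use functions on the empty product $\prod_n P_n$ rather than on $X$. Let $A_n=\bbC^{\prod_{k<n}P_k}\cong\bbC^{2^n}$, with connecting maps $f\mapsto f\circ\mathrm{pr}$ induced by the projection $\prod_{k\le n}P_k\to\prod_{k<n}P_k$. Equivalently, as in the paper, take the algebra generated by commuting projections $p_x$, $x\in X$, with $p_x+p_y=1$ whenever $\{x,y\}$ is one of the pairs. Any pure state $\varphi$ must then satisfy $\varphi(p_x)\in\{0,1\}$. Otherwise $\varphi$ is a proper convex combination of $\varphi(p_x\,\cdot)/\varphi(p_x)$ and $\varphi(p_y\,\cdot)/\varphi(p_y)$. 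So $\{x:\varphi(p_x)=1\}$ is a transversal, contradicting the Russell property.

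Because the product is empty, there is no natural $\ell^2$ space to represent this algebra on. Concreteness instead comes from the GNS representation of the choice-free product state $\tau(\prod_{x\in F}p_x)=2^{-|F|}$. This state is faithful on each $A_n$, since every minimal projection of $A_n$ has trace $2^{-n}$.

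With this algebra your outline for (2) goes through, with one correction. Nonemptiness of the closed sets $K_n$ of states whose restriction to $A_n$ is pure needs more than finitely many choices. You also need a canonical extension of a pure state $\psi$ of $A_n$ to all of $A$, because Hahn--Banach is not available for this non-separable algebra. That extension is $\psi\otimes\tau|_{B_n}$, using $A\cong A_n\otimes B_n$.
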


\begin{Proof}
		\eqref{1.Russell.example} Let $X=\bigsqcup_n X_n$ be a Russell set, where each $X_n$ is a two-element set.  We first define a universal \cstar-algebra $A_0$ given by generators and relations; the existence of such algebra does not require any Choice.  We will then define a state~$\tau$ on $A_0$, and the required algebra will be the image of $A_0$ under the GNS representation associated with  $\tau$. 
	
	Generators of $A_0$ are $p_x$, for $x\in X$. The relations require that each $p_x$ is a projection (that is, $p_x=p_x^*=p_x^2$), that for every $n$ we have $\sum_{x\in X_n} p_x=1$, and that $[p_x,p_y]=0$ for all $x$ and $y$. 
	Thus $p_x p_y=0$ if and only if $x$ and $y$ are distinct and belong to the same $X_n$. The space of linear combinations of products $\prod_{x\in F} p_x$, where $F\subset X$ intersects each $X_n$ in at most one point (every such transversal is finite because $X$ is a Russell set), is a dense *-subalgebra of $A_0$. 
	On such  monomials define a functional by $\tau(\prod_{x\in F} p_x)=2^{-|F|}$. 
	We claim that $\|\tau\|=\tau(1)=1$. To see this, note that $\tau(1)=1$ and therefore $\|\tau\|\geq 1$.  To see that $\|\tau\|\leq 1$, take a finite linear combination $a$ of monomials in projections $p_x$. For a large enough $n$, $a$ is of the form $a=\sum_{x\in \prod_{j\leq n} X_j} \lambda_x \prod_{j\leq n} p_{x(j)}$ with complex coefficients $\lambda_{x}$. Then 
	\[
	|\tau(a)|=\biggl|2^{-n}\sum_{x\in \prod_{j\leq n} X_j} \lambda _x\biggr|\leq \max_{x\in \prod_{j\leq n} X_j} |\lambda_x|=\|a\|,
	\] 
	and $\|\tau\|\leq 1$ follows. It follows that $\tau$ is a contraction on a dense subalgebra of $A_0$. Its continuous extension to $A_0$ (denoted $\tau$ for convenience) still satisfies $\|\tau\|=\tau(1)=1$, and is therefore a state. Since $A_0$ is commutative, it is tracial.

	Then $A=\pi_\tau[A_0]$ is a concrete commutative \cstar-algebra. Fix $x\in X$. 
Since $\tau(p_x)=1/2$, $\pi_\tau(p_x)$ is a nonzero projection. 
In the following, for the simplicity of notation we  will write $p_x$ for $\pi_\tau(p_x)$.
	
	We need to prove that the pure state space of $A$ is empty. Assume $\varphi$ is a pure state on $A$. Then for every $n$ we would have $\varphi(p_x)=1$ for some $x\in X_n$, otherwise $\varphi$ would be a convex combination of $\psi_x(a)=\varphi (p_x a)/\varphi(p_x)$ and $\psi_y(a)=\varphi (p_y a)/\varphi(p_y)$, where $X_n=\{x,y\}$. 
	Thus from $\varphi$ one could define a subset of $X$, $\{x\in X\mid \varphi(p_x)=1\}$ that intersects each $X_n$ in exactly one point; contradiction. 
	
	This implies that the Gelfand spectrum of $A$ is empty, and therefore $A$ is not isomorphic to $C(Z)$ for a compact Hausdorff space $Z$. 
	
For each $n\geq 1$, let $Y_n=\bigsqcup_{1\leq k\leq n}X_k$.  Then $Y_n$ is a finite set with $2n$
elements, and $X$ is the increasing union of the $Y_n$. 
Let $A_n$ be the $\pi_\tau$-image of  set of linear combinations of projections of the form $\prod_{x\in F} p_x$, where $F\subset Y_n$ intersects each $X_k$ in at most one point.  Then $A_n$ is a unital \cstar-subalgebra of $A$
isomorphic to ${\mathbb C}^{2^n}$,
$A_n\subseteq A_{n+1}$, and $\bigcup_nA_n$ is dense in $A$.  In fact, if $B_n$ is the \cstar-subalgebra
generated by $\{p_x:x\in X\setminus Y_n\}$, then $A$ is naturally isomorphic to 
$A_n\otimes B_n$.

\eqref{2.Russell.example} Let $\sfS(A)$ be the state space of  the \cstar-algebra $A$ as in \eqref{1.Russell.example}. $\sfS(A)$ can be described as an inverse limit of finite-dimensional
simplexes: there is an inverse
system
$$
\sfS(A)\to \cdots \to \sfS(A_{-n-1})\to \sfS(A_{-n})\to\cdots\to \sfS(A_{-2})\to \sfS(A_{-1})
$$
where the connecting maps are given by restriction.  $\sfS(A_n)$ is a $(2^n-1)$-simplex, and
the connecting maps send extreme points to extreme points.  The space $\sfS(A)$ can be
naturally put at the index $-\infty$, and the restriction maps from $\sfS(A)$ to $\sfS(A_n)$ separate points.  They are even surjective. This is because for every $\psi\in \sfS(A_n)$, the tensor product of $\psi$ and the restriction of $\tau$ to $B_n$ belongs to $\sfS(A)$.  

To see that $\sfS(A)$ is not compact (in the weak-* topology), let $K_n$ be the set of states of $A$ whose restriction to $A_n$
is a pure state. Then $K_n$ is closed since the pure state space of $A_n$ is closed, and
nonempty since the restriction map from $\sfS(A)$ to $\sfS(A_n)$ is surjective.  
So $(K_n)$ is a decreasing sequence of nonempty closed subsets of $\sfS(A)$, and any element of $\bigcap_n K_n$ would define a choice function for $X$. Therefore $\bigcap_nK_n=\emptyset$
and $\sfS(A)$ is not compact. 
\end{Proof}	

\newpar{} There is an obvious linear map $\theta$ from $A$ as in Proposition~\ref{P.Russell.example}  to the Banach space of bounded continuous
affine functions from $\sfS(A)$ to ${\mathbb C}$ with supremum norm.  Since $A$ is representable, $\theta$ is
isometric,  but it is not obvious that $\theta$ is surjective (the usual proof of surjectivity
for a unital commutative \cstar-algebra uses various forms of Choice).

\newpar{}	It is not difficult to see that the proof of Proposition~\ref{P.Russell.example} goes through if the assumption that each $X_n$ is a two-element set is relaxed to asserting that each $X_n$ is finite. It also works if the set $X$ is amorphous and it can be presented as $X=\bigsqcup_{z\in Z} X_z$,  where~$Z$ is strongly amorphous, and~$X_z$ are pairwise disjoint finite sets. A set with these properties was used in \cite[6.2.3]{blackadar2023hilbert} to produce a Hilbert space without an orthonormal basis. We don't know whether the space $L^2(A,\tau)$ with $A$ and $\tau$ as in the proof of Proposition~\ref{P.Russell.example} has an orthonormal basis, or whether it is isomorphic to a subspace of a Hilbert space with an orthonormal basis. 
	
	\section{Separability of $\cB(\cH)$}

	In the usual (Choice) case, there is a well-known theorem:
	
	\newpar{}
	\begin{Theorem}\label{BHNormSepThm}
		Let $\cH$ be a Hilbert space.  Then $\cB(\cH)$ is norm-separable if and only if $\cH$ is 
		finite-dimensional.
	\end{Theorem}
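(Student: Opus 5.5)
The easy direction comes first. If $\cH$ is finite-dimensional with $\dim\cH=n$, fix a basis; finitely many choices need no Choice. Gram--Schmidt turns it into an orthonormal basis $e_1,\dots,e_n$. Then $\cB(\cH)\cong M_n(\bbC)$ via matrix units, and the matrices with entries in $\bbQ+i\bbQ$ form a countable dense set. This direction is therefore fine in \ZF{} as well.

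For the converse, suppose $\cH$ is infinite-dimensional. The plan is to produce an uncountable family of operators in $\cB(\cH)$ that are pairwise at norm distance at least $1$. Such a family is incompatible with separability, since a countable dense set would give an injection of the family into it (balls of radius $1/2$ around distinct members are disjoint).

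\textbf{Step 1.} Get an orthonormal sequence $(e_n)_{n\in\bbN}$ in $\cH$. With Choice this is immediate: pick linearly independent vectors inductively and apply Gram--Schmidt.

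\textbf{Step 2.} Let $P_S$ be the projection onto $\overline{\Span}\{e_n:n\in S\}$, for each $S\subseteq\bbN$. For $S\neq T$, pick $n\in S\triangle T$; then $\|(P_S-P_T)e_n\|=1$, so $\|P_S-P_T\|=1$. These projections exist without Choice, since $P_S x=\sum_{n\in S}\langle x,e_n\rangle e_n$ is given by an explicit formula. The family $\{P_S\}$ is indexed by $\cP(\bbN)$, which is uncountable by Cantor's diagonal argument. Since a subset of a separable metric space is separable (\ref{ClosedSubspaceProp}), and a $1$-discrete separable space is countable, this is a contradiction.

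The main obstacle is Step 1, and this is surely why the theorem is stated ``in the usual (Choice) case''. Without Choice, an infinite-dimensional Hilbert space need not contain an infinite orthonormal sequence; a Russell-type set gives Dedekind-finite Hilbert spaces. So in the proof I would either invoke Dependent Choice (or Countable Choice) for Step 1, or note that Step 1 is free when $\cH$ is separable: Gram--Schmidt applied to a dense sequence, discarding dependent terms, gives the sequence algorithmically. I expect the subsequent discussion in the paper to examine exactly this failure point in \ZF.
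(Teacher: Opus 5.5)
Read as a statement in \ZFC{} (or \ZF{} plus Countable Choice), your argument is correct, and it is exactly the ``standard argument'' the paper outlines before adapting it. However, the paper proves the theorem in \ZF{} (``with no use of Choice''), and there your Step 1 is a genuine gap that neither of your fallbacks closes. Invoking \DC{} or Countable Choice only proves the weaker statement. Your remark that Step 1 is free for separable $\cH$ does not apply, because you never show that $\cH$ is separable; you only know that $\cB(\cH)$ is. There is also a smaller point in Step 2. The claim that a subset of a separable metric space is separable is not a theorem of \ZF, and \ref{ClosedSubspaceProp} concerns closed subsets. What you actually need is \ref{epsilon-discrete}: send each member of the $1$-discrete family to the least index of a point of the dense sequence within distance $1/2$ of it. That is the injection your opening paragraph already describes.

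The paper gets around Step 1 by never choosing vectors. From a dense sequence $(T_n)$ it forms $S_n=(T_n+T_n^*)/2$. Whenever $1/2\notin\sigma(S_n)$, it takes the projection $P_n=f(S_n)$ given by choice-free functional calculus (\ref{SAFunctCalc}). These projections are dense among all projections, so the rank-one ones among them are dense among rank-one projections. Set $Q_n=\bigvee_{k\le n}P_k$. Either the $Q_n$ stabilize, which forces $Q_{n_0}=I$ and $\dim\cH<\infty$, or the nonzero differences $Q_{n+1}-Q_n$ form a sequence of mutually orthogonal nonzero projections. In the second case, your Step 2, run with projections onto closed spans of their ranges in place of the vectors $e_n$ (Proposition~\ref{OrthProjSeqProp}), gives the contradiction.

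Alternatively, your own route can be completed in \ZF{} with one observation. Fix a single unit vector $\xi$; this is an existential instantiation, not a use of Choice. For $\eta\in\cH$, the rank-one operator $\theta(\zeta)=\langle\zeta,\xi\rangle\eta$ satisfies $\theta\xi=\eta$, so $\|T_n\xi-\eta\|\le\|T_n-\theta\|$. Hence $\{T_n\xi:n\in\bbN\}$ is dense in $\cH$, and $\cH$ is separable. Your least-index Gram--Schmidt on $(T_n\xi)$ then yields an orthonormal sequence whenever $\cH$ is infinite-dimensional, and Step 2 finishes. This is more direct than the paper's argument, which explicitly says it knows no direct proof that separability of $\cB(\cH)$ implies separability of $\cH$.
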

	
	\newpar{}
	The usual proof (or at least one standard proof) requires at least the Countable AC.  But it turns
	out that all use of Choice can be avoided, at the cost of some additional argument.  We outline
	the standard argument, and then adapt it to the non-choice setting; results of \cite{blackadar2023hilbert} yield that all the subspaces and projections used
	are constructible without any Choice..

	One direction is clear: if $\cH$ has finite dimension $n$, then $\cB(\cH)\cong M_n(\bbC)$ has an
	obvious explicit countable dense subset, which requires no Choice.  So we need only the converse.


	Under the Countable AC,  any subspace of a separable metric space is separable.  However, this cannot be
	proved without the Countable AC. For example, a set of Cohen reals in Cohen's symmetric model is a nonseparable subset of $\bbR$. Such set is an example of a Cohen-finite set whose existence is equivalent to some curious statements about Hilbert spaces (see e.g. \cite[Proposition~6.0.3]{blackadar2023hilbert}).  However, we need only a special version. 
		Let $(X,\rho)$ be a metric space, and $\epsilon>0$.  A subset $\{x_i:i\in I\}$ of $X$ is
		{\em $\epsilon$-discrete} if $\rho(x_i,x_j)\geq\epsilon$ for all $i,j\in I$, $i\neq j$. 

\newpar{}\label{epsilon-discrete}		Every  $\epsilon$-discrete
		subset of a separable metric space $X$ is countable.
		To see this, $\{x_i:i\in I\}$ be an $\epsilon$-discrete subset of $X$, and let $(y_n)$ be a dense sequence
		in $X$.  For each $i\in I$, let $n_i$ be the smallest $n$ such that $\rho(x_i,y_n)<\frac{\epsilon}{2}$.
		Then $i\mapsto n_i$ is an injective function from $I$ to $\N$, so $I$ is countable.

	\newpar{}
	\begin{Proposition}\label{OrthProjSeqProp}
		Let $\cH$ be a Hilbert space.  If $\cB(\cH)$ contains a sequence of mutually orthogonal
		nonzero projections, then $\cB(\cH)$ is not norm-separable.
	\end{Proposition}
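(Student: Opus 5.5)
Given a sequence $(p_n)$ of mutually orthogonal nonzero projections in $\cB(\cH)$, I will build an uncountable $1$-discrete family in $\cB(\cH)$ and then invoke \ref{epsilon-discrete}: such a family in a separable metric space must be countable, so $\cB(\cH)$ cannot be norm-separable. The family is indexed by subsets $S\subseteq\N$, giving operators $P_S$; the uncountability of $\cP(\N)$ is a theorem of \ZF{} (Cantor), so no Choice is needed to conclude.

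First I define $P_S$ for each $S\subseteq\N$. Since the $p_n$ are mutually orthogonal, for each $\xi\in\cH$ the partial sums $\sum_{n\in S,\,n\le N}p_n\xi$ have orthogonal increments. Their squared norms satisfy $\sum_{n\in S,n\le N}\|p_n\xi\|^2\le\|\xi\|^2$ (Bessel/Pythagoras for finitely many orthogonal projections, no Choice). Hence the partial sums are Cauchy, and by ($\sigma$-, equivalently Cauchy, sequential) completeness of $\cH$ they converge to a vector $P_S\xi$. The limit is unique, so $S\mapsto P_S$ is a definable function with no choices. Linearity, $\|P_S\xi\|\le\|\xi\|$, and $P_S=P_S^*=P_S^2$ follow from the finite-stage identities by passing to limits. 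Thus $P_S\in\cB(\cH)$ is a projection with $P_Sp_n=p_n$ for $n\in S$ and $P_Sp_n=0$ for $n\notin S$.

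Next I check the discreteness. If $S\neq T$, pick the least $n\in S\triangle T$ (no choice needed); say $n\in S\setminus T$. Since $p_n\neq0$ there is $\xi$ with $p_n\xi=\xi$ and $\|\xi\|=1$. Indeed, some $\eta$ has $p_n\eta\neq0$, and we normalize $p_n\eta$; this is a single existential instantiation, not a use of Choice, since we only need existence. Then $(P_S-P_T)\xi=p_n\xi-0=\xi$, so $\|P_S-P_T\|\ge1$. Thus $\{P_S:S\subseteq\N\}$ is $1$-discrete, and $S\mapsto P_S$ is injective.

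Finally, if $\cB(\cH)$ were separable, \ref{epsilon-discrete} (with $\epsilon=1$) would give an injection from $\cP(\N)$ into $\N$, contradicting Cantor's theorem. The main point needing care is the first step: the strong convergence of $\sum_{n\in S}p_n$ must be obtained in \ZF{}, using only sequential completeness of $\cH$ rather than any orthonormal basis. The argument above handles this directly.
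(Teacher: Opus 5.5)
Your proof is correct and is essentially the paper's argument: the paper takes $P_S$ to be the projection onto the closed span of the ranges of the $p_n$, $n\in S$, which is the same operator as your strong sum $\sum_{n\in S}p_n$, and then concludes exactly as you do from the $1$-discreteness of $\{P_S:S\subseteq\N\}$ and \ref{epsilon-discrete}. Your version spells out the \ZF{} details the paper leaves implicit; the only step you need is the \ZF-provable implication from $\sigma$-completeness to convergence of Cauchy sequences, so the word ``equivalently'' in your parenthetical is unnecessary, since the converse implication is not available without some Choice for nonseparable spaces.
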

	
	\begin{Proof}
		Let $(P_n)$ be a sequence of mutually orthogonal nonzero projections in $\cB(\cH)$, and let
		$\cH_n$ be the range of $P_n$.  Then $(\cH_n)$ is a sequence of mutually orthogonal closed
		subspaces of $\cH$.  For each $S\subseteq\N$, let $\cH_S$ be the closed span of
		$\{\cH_n:n\in S\}$, and $P_S$ the projection onto $\cH_S$.  Then $\{P_S:S\subseteq\N\}$
		is a 1-discrete subset of $\cB(\cH)$ of cardinality $2^{\aleph_0}$.  Since $2^{\aleph_0}>\aleph_0$,
		$\cB(\cH)$ cannot be separable by \ref{epsilon-discrete}.
	\end{Proof}

	Thus to finish the proof of Theorem \ref{BHNormSepThm}, it suffices to find a sequence of
	mutually orthogonal nonzero projections in $\cB(\cH)$ for $\cH$ infinite-dimensional.  This is
	easily done in the usual (Countable AC) case by taking an orthonormal sequence of vectors.
	However, in the absence of the Countable AC, an infinite-dimensional Hilbert space need not
	contain an orthonormal sequence, or even an infinite orthonormal set \cite[6.2.3]{blackadar2023hilbert}; $\cB(\cH)$
	need not even contain any sequence of mutually orthogonal nonzero projections \cite[8.3.2]{blackadar2023hilbert}.
	So to prove \ref{BHNormSepThm}, we proceed indirectly.
	
	\newpar{}
	Suppose $\cH$ is a Hilbert space and $\cB(\cH)$ is separable.  Let $(T_n)$ be a dense sequence
	in $\cB(\cH)$, and for each $n$ let $S_n=(T_n+T_n^*)/2$ be the real part of $T_n$.  Then
	the $S_n$ are self-adjoint, and they are dense in $\cB(\cH)_{sa}$.  If $P$ is a projection
	in $\cB(\cH)$ and $0<\epsilon<\frac{1}{2}$, there is an $S_n$ with $\|P-S_n\|<\epsilon$.
	The spectrum of $S_n$ is contained in $(-\epsilon,\epsilon)\cup(1-\epsilon,1+\epsilon)$
	and hence $\frac{1}{2}\notin\sigma(S_n)$.  Let $f:\R\to\R$ be the function which is 0 on
	$\left ( -\infty,\frac{1}{2}\right )$ and 1 on $\left ( \frac{1}{2},+\infty\right )$; then $f$ is
	continuous on $\sigma(S_n)$, so we may form $P_n=f(S_n)$ (no Choice is needed for this
	functional calculus (\ref{SAFunctCalc})).  Then $P_n$ is a projection, and we have 
	$$\|P-P_n\|\leq\|P-S_n\|+\|S_n-P_n\|<\epsilon+\epsilon=2\epsilon$$
	and thus the sequence $(P_n)$ (defined only for the $n$ for which $\frac{1}{2}\notin\sigma(S_n)$)
	of projections is dense among all projections of $\cB(\cH)$.
	
	\newpar{}
	If $\xi$ is a nonzero vector in $\cH$, let $P$ be the rank-one projection onto the span of $\xi$.
	For any $\epsilon$, $0<\epsilon<1$, there is an $n$ for which $\|P-P_n\|<\epsilon$, and since
	$\epsilon<1$, $P_n$ is also rank one.  Thus the rank-one projections in the sequence $(P_n)$
	are dense in the rank-one projections of $\cB(\cH)$.
	
	\newpar{}
	Reindexing, let $(P_n)$ be a sequence of rank-one projections in $\cB(\cH)$ whose closure contains
	all rank-one projections.  For each $n$ set 
	$$Q_n=\bigvee_{k=1}^n P_k\ .$$
	Then $Q_n$ is a finite-rank projection or rank $\leq n$, and $(Q_n)$ is an increasing sequence.
	The supremum of the $Q_n$ is $I$ since every vector in $\cH$ is in the closure of the union of
	the ranges of the $P_n$.  There are two possibilities:
	\begin{enumerate}
		\item[(1)]  The $Q_n$ stabilize, i.e.\ there is an $n_0$ such that $Q_n=Q_{n_0}$ for all $n>n_0$.
		Then $Q_{n_0}=I$ and $\cH$ is finite-dimensional.
		\item[(2)]  The $Q_n$ do not stabilize, i.e.\ by passing to a subsequence they strictly increase.
		In this case, the projections $Q_{n+1}-Q_n$ form a sequence of mutually orthogonal nonzero
		projections in $\cB(\cH)$, contradicting that $\cB(\cH)$ is separable (\ref{OrthProjSeqProp}).
	\end{enumerate}
	This completes the proof of Theorem \ref{BHNormSepThm} with no use of Choice.

	\newpar{}\label{OrthRankOneSeq}
	We actually showed a little more than necessary:  the projections $Q_{n+1}-Q_n$ have rank one,
	i.e.\ if $\cH$ is infinite-dimensional and $\cB(\cH)$ is separable, then it contains a sequence of
	mutually orthogonal rank-one projections whose ranges span $\cH$ (which then contradicts \ref{OrthProjSeqProp}).
	This, however, does not directly show that $\cH$ is separable without the Countable AC: to obtain
	an orthonormal sequence, one would have to pick a unit vector in the range for each $n$.
	In fact, without the Countable AC there is a nonseparable Hilbert space $\cH$ with such a
	sequence of mutually orthogonal rank-one projections \cite[6.2.3, 6.2.4]{blackadar2023hilbert}.  We do not know how to
	directly prove without Choice that if $\cB(\cH)$ is separable, then $\cH$ is separable; of course
	this follows indirectly from our proof that $\cH$ must be finite-dimensional.

	\newpar{} After the first author proved Theorem~\ref{BHNormSepThm}, the second author realised that he had already seen the method for effectively finding a dense set of projections in \cite{Kec:C*} and even used it in a proof that the $K_0$ group of a separable \cstar-algebra is Borel-computable (\cite[Lemma~3.13(4)]{FaToTo:Turbulence}). 
	\section{Absoluteness}
	
	\newpar{} In this section we introduce set-theoretic absoluteness, but only after we emphasize that no foul play is involved in the ongoing discussion. The following Lemma provides an algorithm for finding an `honest' proof of any statement proved using absoluteness. As the algorithm is wildly inefficient and yields a formal proof which is likely to be incomprehensibly complicated, we state this fact only for reference. 

	\newpar{} \begin{Lemma} If there is a proof that a statement $\varphi$ is provable in \ZF{} (or any other recursively axiomatizable theory $T$), then there is an algorithm for finding a proof of $\varphi$ in \ZF{} (or $T$). 
	\end{Lemma}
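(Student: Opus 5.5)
The idea is that the set of formal proofs in \ZF{} (or in any recursively axiomatizable $T$) is recursively enumerable, so an unbounded search finds a proof. The hypothesis that some proof of ``$\varphi$ is provable'' exists is used only to guarantee that the search halts. It does not have to give us the proof directly.

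First, I would fix a Gödel numbering of finite strings over the countable language of set theory. This identifies formulas and finite sequences of formulas with natural numbers. Since $T$ is recursively axiomatizable, membership in the axiom set is decidable; for \ZF{} this holds because Separation and Replacement are schemata with a decidable shape. Checking whether a finite sequence of formulas is a proof in $T$ then comes down to three decidable checks for each line:
\begin{enumerate}
\item[(a)] the line is a logical axiom;
\item[(b)] the line is an axiom of $T$;
\item[(c)] the line follows from earlier lines by modus ponens or generalization.
\end{enumerate}
So the relation ``$n$ codes a proof of $\varphi$ in $T$'' is decidable.

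The algorithm enumerates $n=0,1,2,\dots$ and outputs the first $n$ that codes a $T$-proof of $\varphi$. It is correct by construction, since anything it outputs is a verified proof.

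The remaining step is termination. By assumption we have a proof, in our metatheory, that $\varphi$ is provable in $T$, i.e.\ a proof of the $\Sigma^0_1$ arithmetical sentence $\exists n\,\mathrm{Prf}_T(n,\ulcorner\varphi\urcorner)$. If that proof is sound for arithmetic statements, the existential is actually true and the search halts. This is automatic when the metatheory is \ZF{} and we read the statement in the standard natural numbers.

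I expect this termination step to be the main subtlety. In a nonstandard model of arithmetic the witness might be a nonstandard code, and then the search would not halt. I would handle this by stating explicitly that we take the metatheory to be $\Sigma^0_1$-sound, which is the tacit assumption behind all ordinary mathematics. Under that assumption the ``proof that a proof exists'' yields an honest, if astronomically long, proof that the algorithm finds.
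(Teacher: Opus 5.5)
Your proposal is correct and is the paper's argument: since the axioms of $T$ are recursive, one can effectively enumerate and check all formal $T$-proofs, and the hypothesis that $\varphi$ is provable is used only to guarantee that a proof of $\varphi$ eventually appears. Your remark that termination needs the metatheory to be $\Sigma^0_1$-sound, so that the proved sentence $\exists n\,\mathrm{Prf}_T(n,\ulcorner\varphi\urcorner)$ has a standard witness, makes precise a point the paper leaves implicit in the phrase ``knowing that $\varphi$ is provable.''
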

	
	\begin{Proof} Since the axioms of the theory $T$ can be recursively  enumerated, and since a proof in $T$ is a finite sequence of sentences that satisfies strict and clear rules, one can enumerate all (formal) proofs in $T$. Knowing that $\varphi$ is provable in $T$, one knows that a proof for $\varphi$ will eventually appear in this list. 
	\end{Proof} 	
	
\subsection{Standard Borel spaces}

A topological space is \emph{Polish} if it is separable and completely metrizable. 
\emph{Standard Borel space} 
is a pair $(\sfX,\Sigma)$ where $\sfX$ is a set and $\Sigma$ is 
a $\sigma$-algebra of subsets of~$\sfX$ with the property that $\sfX$ carries a Polish topology such that 
$\Sigma$ is the algebra of Borel subsets of $\sfX$ in this topology. 
 Standard Borel spaces appear both in operator algebras and in set theory (\cite[\S 3]{Arv:Invitation}, \cite[\S 12.B]{Ke:Classical}). 
If $(X,\Sigma)$ is standard and $Y\in \Sigma$, then $(Y,\Sigma\cap \cP(Y))$ is standard (\cite[\S 13.4]{Ke:Classical}).

The Cartesian product of two standard Borel spaces carries a natural product Borel structure. This is the Borel structure associated with the product topology of the underlying Polish spaces, and therefore a standard Borel space.

\newpar{} Lemma~\ref{L.S(A)} implies that if $A$ is a separable \cstar-algebra then the Borel structure associated to the weak*-topology on each one of the spaces $\sfS(A)$ and $\sfP(A)$ is standard (recall that a $G_\delta$ in a Polish space is a Polish space).

\subsection{The projective hierarchy}
The projective hierarchy of subsets
of a standard Borel space $\sfX$ is defined by recursion as follows.

\newpar{}\begin{Definition} \label{Def.Sigma1n} Let $(\sfX,\Sigma)$ be a standard Borel space. 
	A subset $\sfA$ of $\sfX$ is $\bSigma^1_1$ (or \emph{analytic})\footnote{This is equivalent to being an analytic 
		subset of $\sfX$ (i.e., a continuous image of a Borel subset of some Polish space) in some, or any, Polish topology on $\sfX$ (\cite[25.A]{Ke:Classical}, see also Theorem~\ref{T.Isomorphic} below).} if there are a standard Borel space ($\sfY,\Sigma')$  and a Borel set $\sfB\subseteq \sfX\times \sfY$ such that 
	\begin{equation}\label{eq.Sigma11}
	\sfA=\{x\in \sfX\mid (\exists y\in \sfY) (x,y)\in \sfB\}.
	\end{equation}
	A subset $\sfA$ of $\sfX$ is $\bPi^1_1$ (or \emph{coanalytic}) if there are a standard Borel space ($\sfY,\Sigma')$  and a Borel set $\sfB\subseteq \sfX\times \sfY$ such that 
	\[
	\sfA=\{x\in \sfX\mid (\forall y\in \sfY) (x,y)\in \sfB\}.
	\]
	Equivalently, $\bPi^1_1$ sets are complements of $\bSigma^1_1$ sets. 
	
	For $n\geq 1$, $\bSigma^1_{n+1}$ and $\bPi^1_{n+1}$ sets are defined by recursion on $n$ as follows. A subset $\sfA$ of $\sfX$ is $\bSigma^1_{n+1}$ if there are a standard Borel space ($\sfY,\Sigma')$  and a~$\bPi^1_n$ set $\sfB\subseteq \sfX\times \sfY$ such that 
	\[
	\sfA=\{x\in \sfX\mid (\exists y\in \sfY) (x,y)\in \sfB\}.
	\] 
	A subset $\sfA$ of $\sfX$ is $\bPi^1_{n+1}$ if there are a standard Borel space ($\sfY,\Sigma')$  and a~$\bSigma^1_n$ set $\sfB\subseteq \sfX\times \sfY$ such that 
	\[
	\sfA=\{x\in \sfX\mid (\forall y\in \sfY) (x,y)\in \sfB\}.
	\] 
	Equivalently, $\bPi^1_n$ sets are complements of $\bSigma^1_n$ sets for all $n$. 
\end{Definition} 

\newpar{}\begin{Remark}The following remark may be safely ignored by most readers, but there is a reason why we used boldface symbols $\bSigma$ and $\bPi$ in Definition~\ref{Def.Sigma1n}. The lightface notation $\Sigma^1_1$ is reserved for the class of sets $\sfA$ such that the Polish spaces $\sfX$ and $\sfY$, as well as the Borel set $\sfB$, are effective (see \cite[40.B]{Ke:Classical}).  The technical details of the definition of `effective' are irrelevant for us, but there are only countably many recursive closed subsets of a given recursively presented Polish space. This for example includes the Cantor space,  $\bbR^n$ and $\cO_n$ for all $n$ and all separable AF algebras whose $K_0$ group is presented by recursive relations, but not all separable AF algebras.  From here one proceeds to recursively define the lightface hierarchies $\Sigma^1_n$ and $\Pi^1_n$. Tools from effective descriptive set theory, and the structure of the poset of $\Sigma^1_1$ subsets of $\cP(\bbN)$ in particular, were famously used to prove the Glimm--Effros dichotomy (\cite{HaKeLo:GE}), but we will not need them here.   
%
\end{Remark}

\subsection{Logical complexity I. Projective sets} \label{S.Logical} By unraveling Definition~\ref{Def.Sigma1n} one sees that a subset $\sfA$ of a standard Borel space $(\sfX_1,\Sigma_1)$ is $\bSigma^1_{2n-1}$ for $n\geq 1$ if there are standard Borel spaces $(\sfX_j, \Sigma_j)$, for $2\leq j\leq 2n$, and a Borel set $\sfB\subseteq \prod_{j=1}^{2n} \sfX_j$ such that 
\[
\sfA=\{x_1\mid (\exists x_2\in \sfX_2)(\forall x_3\in \sfX_3)\dots (\exists x_{2n}\in \sfX_{2n})(x_1,\dots, x_{2n})\in \sfB\}, 
\]
and that $\sfA$ is $\bSigma^1_{2n}$ for some $n\geq 1$ if there are standard Borel spaces $(\sfX_j, \Sigma_j)$, for $2\leq j\leq 2n+1$, and a Borel set $\sfB\subseteq \prod_{j=1}^{2n+1} \sfX_j$ such that 
\[
\sfA=\{x_1\mid (\exists x_2\in \sfX_2)(\forall x_3\in \sfX_3)\dots (\forall x_{2n+1}\in \sfX_{2n+1})(x_1,\dots, x_{2n+1})\in \sfB\}. 
\]
Analogous formulas for $\bPi^1_n$ sets are easily derived from the above by taking complements and therefore omitted.

\newpar{}Two standard Borel spaces are \emph{Borel-isomorphic} if there is a bijection~$f$ between them such that both $f$ and its inverse are Borel-measurable. The following is Kuratowski’s theorem. 

\newpar{}\begin{Theorem}\label{T.Isomorphic}
	Two standard Borel spaces are Borel-isomorphic if and only if they have the same cardinality, which is either countable or $2^{\aleph_0}$. 
	Any Borel-isomorphism between standard Borel spaces sends $\bSigma^1_n$ sets to $\bSigma^1_n$ sets and $\bPi^1_n$ sets to $\bPi^1_n$ sets  for every~$n$. 
\end{Theorem}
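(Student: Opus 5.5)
The plan is to prove the two assertions separately. The second one, preservation of projective classes, is a formal induction. The first, the classification up to Borel isomorphism, is the substantive part.

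\emph{Preservation of the hierarchy.} Let $f\colon\sfX\to\sfX'$ be a Borel isomorphism. I will show by induction on $n$ that $f[\sfA]$ is $\bSigma^1_n$ (respectively $\bPi^1_n$) whenever $\sfA$ is.

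For the base case, take $\sfA=\{x\mid(\exists y\in\sfY)(x,y)\in\sfB\}$ with $\sfB$ Borel. The map $f\times\mathrm{id}_\sfY$ is a Borel isomorphism of $\sfX\times\sfY$ onto $\sfX'\times\sfY$. Indeed, its coordinates are Borel, and the product $\sigma$-algebra is generated by rectangles, so preimages of rectangles are Borel. Hence $\sfB'=(f\times\mathrm{id})[\sfB]$ is Borel: it is the preimage of $\sfB$ under the Borel map $f^{-1}\times\mathrm{id}$. Moreover $f[\sfA]=\{x'\mid(\exists y)(x',y)\in\sfB'\}$.

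The $\bPi$ case follows by taking complements, since $f$ is a bijection. The inductive step is identical, with the inductive hypothesis applied to $f\times\mathrm{id}_\sfY$ and the $\bPi^1_n$ (resp.\ $\bSigma^1_n$) set $\sfB$. No Choice is used anywhere in this part.

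\emph{Classification.} One direction is trivial: an isomorphism is a bijection. For the converse, I will show that every uncountable standard Borel space $\sfX$ is Borel isomorphic to $2^\bbN$. The countable case is immediate: all singletons are Borel, so every subset is Borel, and any bijection between countable spaces is a Borel isomorphism.

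For uncountable $\sfX$ I will build Borel injections in both directions and then apply a Borel Schröder--Bernstein argument. The standard back-and-forth construction gives a bijection that is piecewise defined via $f$, $g^{-1}$ on countably many Borel pieces. These pieces are Borel because injective Borel images of Borel sets are Borel, by the Lusin--Souslin theorem.

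Injecting $\sfX$ into $2^\bbN$ is done via a countable separating family of Borel sets, for instance the balls around a countable dense set with rational radii. Injecting $2^\bbN$ into $\sfX$ requires the perfect set theorem for Borel sets: an uncountable Borel subset of a Polish space contains a homeomorphic copy of Cantor space.

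\emph{Main obstacle.} The delicate step is this last one, together with the dichotomy ``countable or $2^{\aleph_0}$''. Both need the perfect set property for Borel sets, i.e.\ for Polish spaces after refining the topology. The Cantor--Bendixson proof requires showing that the scattered part is countable, which uses a countable base and is choice-free. Refining the topology so that a given Borel set becomes clopen also requires some care, and I would follow Kechris \S13 and check that only the countable base is used there.

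If some step genuinely needs countable choice (Lusin--Souslin is the likely suspect), I would instead simply cite \cite[\S 15.B]{Ke:Classical}. The first part of the statement is only needed for orientation, so in the worst case its proof can be relegated to the literature in ZFC.
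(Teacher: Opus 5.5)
Your proposal is correct and follows the paper's proof: a perfect subset of the Polish space $\sfX$ (Cantor--Bendixson) gives a continuous injection of Cantor space, a countable separating family gives a Borel injection back, and Borel Schr\"oder--Bernstein finishes, while the projective classes are transported by induction on $n$ followed by projection. The only differences are minor: the paper moves the witnessing set via the Borel graph of $f$ (which works for an arbitrary Borel map on the $\bSigma$ side) rather than via $f\times\mathrm{id}$, and you do not need the perfect set theorem for Borel sets, since $\sfX$ itself carries a Polish topology.
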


\begin{Proof} 
	If $\sfX$ is a countable standard Borel space, then its Borel $\sigma$-algebra is equal to  $\cP(\sfX)$, thus every two countable standard Borel spaces of the same cardinality are Borel-isomorphic. The second part is in this case vacuous.
	
	So it suffices to prove that every uncountable standard Borel space~$\sfX$ is Borel-isomorphic to the Cantor space~$K$. By Cantor--Bendixson analysis,~$\sfX$ contains a perfect subset (\cite[Theorem~6.4]{Ke:Classical}), hence there is a continuous injection from $K$ into $\sfX$. On the other hand, there is a Borel injection from $\sfX$ into $K$ (for an elegant proof of this fact due to Jenna Zomback see \cite[Lemma B.2.3]{farah2019combinatorial}). By the Borel version of the Schr\"oder--Bernstein theorem (\cite[Theorem~15.7]{Ke:Classical}), $\sfX$ is Borel-isomorphic to $K$. 
	
	
	It remains to prove that a Borel isomorphism between Polish spaces preserves the projective hierarchy. 
	Clearly, a Borel isomorphism sends Borel sets to Borel sets.
	Suppose that $f\colon \sfX\to \sfY$ is a Borel-measurable map between Polish spaces and that $\sfA\subseteq \sfX$ is a $\bSigma^1_1$ set. 
	Fix a standard Borel space~$\sfZ$ and  a Borel $\sfB\subseteq \sfX\times \sfZ$ such that $\sfA$ is the projection of $\sfB$. The graph of $f$ is a Borel subset of $\sfX\times \sfY$. The following subset of $\sfX\times \sfZ\times \sfY$: 
	\begin{equation}\label{eq.BorelIsomorphism}
		\{(b,a,f(a)): (b,a)\in \sfB\}
	\end{equation}
	is  an intersection of two Borel sets and therefore Borel, and its  projection to $\sfY$ is equal to $f[\sfA]$. 
	This implies that~$\bSigma^1_1$ sets are preserved by Borel-isomorphisms, and the statement for~$\bPi^1_1$ sets follows by taking complements. 
	
	Suppose that  $n$ is such that every Borel isomorphism between standard Borel spaces sends $\bSigma^1_n$ sets to $\bSigma^1_n$ sets and $\bPi^1_n$ sets to $\bPi^1_n$ sets. Fix a Borel isomorphism between standard Borel spaces $f\colon \sfX\to \sfY$. The proof that the image of a $\bPi^1_n$ set $\sfA\subseteq \sfX$ under a Borel map $f$ is $\bSigma^1_{n+1}$ is analogous to the proof in case when $n=0$: Fix a $\bPi^1_{n}$ set $\bfB\subseteq \sfX\times \sfZ$, note that the set in \eqref{eq.BorelIsomorphism} is an intersection of a $\bPi^1_n$ set and a Borel set, and therefore $\bPi^1_n$. Its projection to $\sfY$ is equal to $f[\sfA]$. 
	By taking complements, this conclusion extends to~$\bPi^1_{n+1}$ sets and completes the proof of the inductive step. 
\end{Proof} 

\newpar{}\begin{Exercise} \label{Ex.Luzin} As an illustration, we state what may appear as an innocent--looking `exercise' in real analysis, but  is much more than that. 
Consider a subset of $\bbR$ defined as follows. 
Let $A$ be a Borel subset of $\bbR^3$. Let $B$ be the projection of $A$ to the $xy$-plane and let $C$ be the complement of $B$. Let $D$ be the projection of $C$ to the $x$-axis. 

Is every set $D$ obtained in this manner Lebesgue measurable?

This question is asking whether every $\bSigma^1_2$ subset of $\bbR$ is Lebesgue measurable. The answer to this question is nothing short of fascinating. In G\" odel's constructible universe the answer is negative, and there is a well-ordering of the reals that is $\bSigma^1_2$ (\cite[Theorem~25.26]{Jech:SetTheory}). 
On the other hand, the existence of a measurable cardinal implies that the answer is positive (this follows from e.g. \cite[Theorem 14.1]{kanamori2008higher}). 
As if this was not fascinating enough,  if every $\bSigma^1_3$ set is Lebesgue measurable, then there is an inner model of \ZFC{} with an inaccessible cardinal (\cite{shelah1984can}).  
Thus the theory  \ZF+`all sets of reals are Lebesgue measurable' is equiconsistent with ZFC+`there exists an inaccessible cardinal' and of  consistency strength higher than that of   ZFC.  
\end{Exercise}

\subsection{Logical complexity II. Projective formulas}
\newpar{} We will need to define $\bSigma^1_n$ and $\bPi^1_n$ formulas. For completeness, let us first recall the definition of $\Sigma_n$ and $\Pi_n$ formulas. 
A formula in a first-order language is called $\Sigma_1$ if it has the form
\begin{equation}\label{eq.Sigma1}
(\exists x_1)(\exists x_2)\dots (\exists x_k)\varphi
\end{equation}
where $\varphi$ is quantifier-free. A negation of a $\Sigma_1$ formula is called $\Pi_1$. If $\varphi$ is $\Pi_n$ then  a formula as in \eqref{eq.Sigma1} is $\Sigma_{n+1}$, and negations of $\Sigma_{n+1}$ formulas are called $\Pi_{n+1}$. 

\newpar{} The definition involves  
\emph{second-order arithmetic},  a subsystem of \ZFC{} introduced by Hilbert and Bernays (see \cite[\S  12]{kanamori2008higher} or \cite{simpson2009subsystems}).  This is the  two-sorted structure 
\[
(\bbN,\bbNN,+,\cdot,\ex,<,0,1,\ap).
\]
The standard interpretation of this structure has two disjoint sorts, corresponding to the set of natural numbers  $\bbN$ and the Baire space
$\bbNN$. 
The functions $+,\cdot$, relation $<$, and constants $0,1$ have their usual interpretations on  $\bbN$.  
The function $\ex$ from $\bbN^2$ to $\bbN$ is interpreted as $\ex(m,n):=m^n$. (`$\ex$' stands for `exponential.') 
The only function involving the sort $\bbNN$ is $\ap$, 
interpreted as $\ap(a,m):=a(m)$. (`$\ap$' stands for `application.')

\newpar{} The language of second-order arithmetic 
has two sorts of variables. The variables ranging over $\bbN$ are $x^0_j$, for $j\in \bbN$
and the variables ranging over $\bbNN$ are $x^1_j$, for $j\in \bbN$.       
Correspondingly we have two sorts of quantifiers, the \emph{number quantifiers} $\exists^0, \forall^0$
and the \emph{function quantifiers} $\exists^1, \forall^1$. 
\emph{Bounded quantifiers} are  of the form 
$(\exists^0 x\leq y)$ or 
$(\forall^0 x\leq y)$.\footnote{These are the abbreviations for  
	$(\exists^0 x^0_j\leq x^0_k)$ 
	and
	$(\forall^0 x^0_j\leq x^0_k)$, with $j$ and $k$ in $\bbN$.  The omission of 
	subscripts and/or superscripts somewhat increases 
	readability without creating  ambiguity.} 
There is no bounded quantification over the sort $\bbNN$. 

\newpar{} The formulas of second-order arithmetic are also called  \emph{analytical}\index{formula!analytical} formulas. 
The  \emph{arithmetical}\index{formula!arithmetical} formulas are the formulas of second-order arithmetic 
that do not involve any function quantifiers. 

\newpar{} \label{S.Sigma-1-n} For $n\geq 1$ the $\bSigma^1_n$ and $\bPi^1_n$ formulas  are defined recursively 
as follows. 
A formula is $\bSigma^1_n$ if it is of the form 
\[
(\exists^1 x_1)(\forall^1 x_2) (\exists^1 x_3)\dots (Q x_n) \varphi(\bar x, \bar y)
\]
where $Q$ is $\exists^1$ if $n$ is odd and $\forall^1$ if $n$ is even, 
and $\varphi$ is an arithmetical formula, possibly with parameters in $\bbNN$.  
A formula is $\bPi^1_n$ if it is of the form 
\[
(\forall^1 x_1)(\exists^1 x_2) (\forall^1 x_3)\dots (Q x_n) \varphi(\bar x, \bar y)
\]
where $Q$ is $\forall^1$ if $n$ is odd and $\exists^1$ if $n$ is even, 
and $\varphi$ is an arithmetical formula,  possibly with parameters in $\bbNN$.

The following lemma is proved by a coding argument showing that quantifiers over $\bbN$  can be absorbed into quantifiers over $\bbNN$. 

\newpar{} \label{L.Sigma1n-formula} \begin{Lemma}
	Suppose that $(\sfX_j,\Sigma_j)$, for $j<n+1$,  are standard Borel spaces and that $\sfY\subseteq \prod_{j<n+1} \sfX_j$ belongs to the $\sigma$-algebra generated by the rectangles with sides in $\Sigma_j$. 
	
	Then every formula of the form 
	\[
	(\exists^1 x_1)(\forall^1 x_2) (\exists^1 x_3)\dots (Q x_n) \varphi(\bar x, y)\in \sfY
	\]
	where $Q$ is $\exists^1$ if $n$ is odd and $\forall^1$ if $n$ is even, is $\bSigma^1_n$ and every formula of the form 
		\[
	(\forall^1 x_1)(\exists^1 x_2) (\forall^1 x_3)\dots (Q x_n) (\bar x, y)\in \sfY, 
	\]
	where $Q$ is $\forall^1$ if $n$ is odd and $\exists^1$ if $n$ is even, is $\bPi^1_n$. \qed 
	\end{Lemma}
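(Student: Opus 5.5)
We argue by induction on $n$, taking as the base case the statement that the set $\{(\bar x,y)\mid (\bar x,y)\in \sfY\}$ is Borel in $\prod_{j<n+1}\sfX_j$. The $\sigma$-algebra generated by rectangles with Borel sides coincides with the Borel $\sigma$-algebra of the product Polish topology, since each $\sfX_j$ is second countable and so the product topology has a countable base of open rectangles. Hence $\sfY$ is a Borel subset of the standard Borel space $\prod_{j<n+1}\sfX_j$. To match the notation of \S\ref{S.Logical}, we write $\sfX_1,\dots,\sfX_n$ for the spaces over which $x_1,\dots,x_n$ range and $\sfX_0$ for the space of the free variable $y$.

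For the inductive step we peel off quantifiers from the inside out. We define
\[
\sfB_k=\{(x_1,\dots,x_k,y)\mid (Q_{k+1}x_{k+1})\dots(Q_n x_n)\,(\bar x,y)\in\sfY\},
\]
so that $\sfB_n=\sfY$ is Borel. Passing from $\sfB_{k+1}$ to $\sfB_k$ is exactly a projection along $\sfX_{k+1}$ (for $\exists$) or a co-projection, that is, the complement of the projection of the complement (for $\forall$). Definition~\ref{Def.Sigma1n} then shows directly that each such step raises the class by one: a projection of a $\bPi^1_m$ (or Borel) set is $\bSigma^1_{m+1}$ (respectively $\bSigma^1_1$), and dually for $\forall$. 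Because the quantifiers alternate, after $n$ steps $\sfB_0\subseteq\sfX_0$ is $\bSigma^1_n$ when the outermost quantifier is $\exists$, and $\bPi^1_n$ in the dual case. This agrees with the unravelled normal forms of \S\ref{S.Logical}. One small point needs care: Definition~\ref{Def.Sigma1n} places the free coordinate $x$ first, so we must permute coordinates, which is a homeomorphism and hence preserves all classes by Theorem~\ref{T.Isomorphic}.

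To connect this with formulas in the sense of \ref{S.Sigma-1-n}, we use Theorem~\ref{T.Isomorphic} to identify each uncountable $\sfX_j$ with $\bbNN$ by a Borel isomorphism. Countable spaces can be handled as well: they embed as Borel subsets of $\bbNN$, and quantifiers over them are relativized by a Borel condition. Theorem~\ref{T.Isomorphic} guarantees that the class of the resulting set is unchanged. The membership condition $(\bar x,y)\in\sfY$ then becomes membership in a Borel subset of $(\bbNN)^{n+1}$.

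It remains to express Borel membership by a formula with a real parameter. We fix a Borel code $c\in\bbNN$ for the set. Membership in the coded set can be written both as a $\bSigma^1_1$ formula (there exists a witnessing evaluation of the code tree) and as a $\bPi^1_1$ formula (every evaluation accepts), and any number quantifiers that appear can be absorbed into function quantifiers by the usual coding of sequences of reals as single reals. Choosing the $\bSigma^1_1$ form when the innermost quantifier $Q_n$ is $\exists$, and the $\bPi^1_1$ form when it is $\forall$, lets the extra function quantifier merge with $Q_n$, so the total quantifier count stays at $n$.

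The main obstacle is to verify that this coding of Borel sets and the absorption of number quantifiers are carried out in \ZF, without Choice. To do this we work with codes that are fixed explicitly, avoiding any choice of representatives. Transfinite recursion on the well-founded code tree is available in \ZF, so the evaluation can be defined there.
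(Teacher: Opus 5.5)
In \ZFC{} your proof is correct, and it is essentially a detailed version of the coding argument the paper only gestures at. Your Part~1 recovers the unravelled normal forms of \S\ref{S.Logical} from Definition~\ref{Def.Sigma1n}. In Part~2, the one extra function quantifier coming from the $\bSigma^1_1$/$\bPi^1_1$ description of membership in a coded Borel set is merged into $Q_n$ by pairing. On your route no number quantifiers actually need to be absorbed, since \ref{S.Sigma-1-n} allows an arithmetical matrix.

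The genuine problem is your last paragraph, which asserts that the coding step can be carried out in \ZF.
\begin{itemize}
\item The step `we fix a Borel code $c\in\bbNN$ for the set' presupposes that every member of the $\sigma$-algebra generated by the rectangles has a Borel code. The natural proof (that the coded sets form a $\sigma$-algebra) must pick codes for countably many sets at once, which is an instance of countable choice. Fixing codes `explicitly' is impossible when $\sfY$ is only known to lie in that $\sigma$-algebra.
\item This cannot be repaired in \ZF. In the Feferman--L\'evy model $\bbR$ is a countable union of countable sets, so every set of reals belongs to the Borel $\sigma$-algebra. On the other hand, let $U\subseteq(\bbNN)^2$ be a universal $\bSigma^1_1$ set. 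By Cantor's diagonal argument, which uses no Choice, the set $D=\{x\mid (x,x)\notin U\}$ is not definable by any $\bSigma^1_1$ formula with a real parameter.
\item Now take $n=1$ and $\sfY=\bbNN\times D$. The formula $(\exists^1 x_1)(x_1,y)\in\sfY$ defines $D$, so the formula version of the lemma fails in that model.
\end{itemize}
In \ZF{} the hypothesis must therefore be that $\sfY$ comes with a Borel code. This is the case in all of the paper's applications, where the sets are closed, open or $G_\delta$ and are described by explicit countable data. Under that hypothesis your evaluation-map argument really does need no Choice, because the evaluation of a well-founded code tree is unique. Your Part~1, which only concerns the set classes of Definition~\ref{Def.Sigma1n} with `Borel' meaning the generated $\sigma$-algebra, is valid in \ZF{} as it stands.
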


\newpar{} Another way to state this is that $\bSigma^1_{n+1}$ formulas are logically equivalent to the statements of the form `$\sfA\neq \emptyset$' for a $\bPi^1_n$ set $\sfA$ while $\bPi^1_{n+1}$ formulas are logically equivalent to the statements of the form `$\sfA=\emptyset$' for a $\bSigma^1_n$ set~$\sfA$. 
If $\sfA$ is a Borel set, then asserting that $\sfA\neq \emptyset$ is $\bSigma^1_1$ and asserting that $\sfA=\emptyset$ is $\bPi^1_1$.

\subsection{\ZF{} and its models} The following discussion is well known to logicians and can be found in all sufficiently advanced books in logic in one form or another. 
Every mathematical statement $\varphi$ that can be formalized within \ZF{} (to the best of our knowledge, this includes all mathematical statements) either has a proof in \ZF{} or the completeness theorem of the first-order logic implies that there is a model of  \ZF+$\lnot\varphi$ . The latter cannot be taken as an indication that  \ZF+$\lnot\varphi$ is necessarily a reasonable theory. 
 
\newpar{} For example, if $\varphi$ is the G\"odel sentence that codes the assertion `ZF is consistent', then $\varphi$ has a proof in \ZF{} if and only if it is false. Thus, if \ZF{} has a model (i.e., if it is a  consistent theory), then it has a model $M$ in which $\lnot\varphi$ holds. Such $M$ contains a nonstandard natural number, a code for a `proof' of inconsistency of ZF. 
 In particular, $M$ is not well-founded (and even its set of natural numbers is not well-founded). Sentences consistent with \ZF{} that hold only in ill-founded models are generally considered to be false, albeit not provably false in ZF; see the last paragraph of \ref{Discussion.Absoluteness} for additional examples. 

 \newpar{}
Most natural models of \ZF{} are the ones that are transitive.  A model $M$ of \ZF{} is \emph{transitive} if the `$\in$' relation in $M$ is interpreted  in the natural way and every element of $M$ is a subset of $M$. 
While models obtained by the L\" owenheim--Skolem method are rarely transitive, every well-founded model of \ZF{} is isomorphic to one that is transitive via the \emph{Mostowski collapse} (see e.g., \cite[Theorem~A.6.2]{farah2019combinatorial}). 
Every transitive model is correct about the set of natural numbers. Since a proof of the inconsistency of \ZF{} (or of any other statement) can be coded by a natural number,  a transitive model cannot satisfy the sentence asserting that \ZF{} is inconsistent. 

\newpar{} If $\varphi(x,\bar a)$ is a first-order formula of the language of \ZF{} with parameters $\bar a$, $M$ is a transitive model of \ZF{} (or of a large enough fragment of ZF) such that $\bar a$ belongs to $M$, then for $b\in M$ we write 
\[
\varphi(b,\bar a)^M
\]
for the sentence obtained from $\varphi$ by replacing free occurrences of $x$ with~$b$ and restricting all of its quantifiers to $M$. This is the \emph{interpretation} of $\varphi(b,\bar a)$ in $M$. 

If $\sfA=\{x\mid \varphi(x,\bar a)\}$, then the interpretation of $\sfA$ in $M$ is defined to be  
\[
\sfA^M=\{b\in M\mid \varphi(b,\bar a)^M\}.
\]
\newpar {}\begin{Convention}
Among set theorists, the reals are commonly identified with the Cantor set (which is identified with $\cP(\bbN)$) or the Baire space~$\bbN^\bbN$.  By Kuratowski’s theorem (Theorem~\ref{T.Isomorphic}), every two uncountable standard Borel spaces are Borel-isomorphic.    
Suppose that $M$ is a transitive model of \ZF{} and $\varphi(x)$ is a formula such that $X=\{x\mid \varphi(x)\}$ is a subset of $\cP(\bbN)$ (i.e., a set of reals). What is the relationship between the sets  $X^M=\{x\in M \mid \varphi^M(x)\}$ and  $X=\{x\mid \varphi(x)\}\cap M$? If $\varphi$ has only bounded quantifiers, then a proof by induction on complexity of $\varphi$ shows that $X^M=X\cap M$, or in other words, that every $x\in M$ satisfies $\varphi^M(x)$ if and only if $\varphi(x)$ holds.  A moment of thought shows that, if $\varphi$ is sufficiently complex, there is no obvious reason why one should have $X\cap M=X^M$ in general. 
\end{Convention}

\newpar{}\begin{Definition}
	A subset $\sfA$ of a standard Borel space defined by a formula in parameters $\bar a$ is \emph{absolute for transitive models of ZF} if for every transitive model $M$ of \ZF{} such that $\bar a$ belongs to $M$ we have $\sfA^M=\sfA\cap M$. 
	
	A formula $\varphi$ (possibly with parameters $\bar a$) is \emph{absolute for transitive models of ZF} if for every transitive model $M$ of \ZF{} such that the parameters of $\varphi$ belong to~$M$,  $\varphi^M$ is equivalent to $\varphi$.  
\end{Definition}
	
	\newpar{} This definition naturally relativizes to any reasonable subclass of the class of transitive models.\footnote{Proceeding in the opposite direction, one could consider absoluteness for a larger class of models, as long as they satisfy  $\bbR^M=\bbR\cap M$. This follows from transitivity, but a weaker condition of being a a so-called $\omega$-model (i.e., satisfying $\bbN^M=\bbN$) suffices for this. This observation can be useful but we will not need it here.} In particular, we can talk about formulas and sets \emph{absolute for transitive models of \ZF{} that contain all countable ordinals}. 

\newpar{}\label{Ex.Delta12WO} The question of absoluteness is rather intricate. For example, in G\"odel's constructible universe $L$ (e.g., \cite{Ku:Set}) there is a $\Sigma^1_2$ well-ordering of the reals.\footnote{This is a lightface $\Sigma^1_2$, meaning that the well-ordering is defined without using any parameters.}  More precisely, there is a Borel (even $G_\delta$) $\sfB\subseteq \bbR^4$ such that 
\[
x\leq^L y \text{ if and only if } (\exists z)(\forall w) (x,y,z,w)\in \sfB
\]
defines a well-ordering of the reals. (In other words, if we take the projection of $\sfB$ to $\bbR^3$, then the projection of its complement to $\bbR^2$ is the graph of a well-ordering of $\bbR$.)

The statement that $\leq^L$ is a linear ordering is obviously $\Pi^1_3$,\footnote{Since no parameters are needed for this definition, we are using the lightface notation.}\,\footnote{The statement is $(\forall x\in \bbR)(\forall y\in \bbR) (x\leq^L y\lor y\leq^L x)$.} and a little reflection using the fact that $\bbR^\bbN$ has a standard Borel space structure shows that even the statement that $\leq^L$ is a well-ordering of $\bbR$ all of whose proper initial segments are countable is $\Pi^1_3$.\footnote{$(\forall (x_n)\in \bbR^\bbN)(\exists n) x_{n+1}\not\leq^L x_n$.} Each one of these $\Pi^1_3$ statements is true in $L$, but they both fail e.g., in Cohen's original model for the negation of the Continuum Hypothesis (cf. \ref{Ex.Luzin}). In other words, we have an example of a $\Pi^1_3$ statement that may not be absolute between transitive models of \ZFC{} that contain all countable ordinals.\footnote{If $V=L$, then $L$ is the only transitive model of \ZFC{} that contains all ordinals. hence every statement is absolute between such models.}

\subsection{Trees and absoluteness} 
\newpar{} Lemma~\ref{L.TreeRank} below is the basis of all known absoluteness results. 

\newpar{}\begin{Definition}
If $\sfX$ is a set, then a tree on $\sfX$ is a set $\sfT\subseteq \bigcup_n \sfX^n$ which is hereditary, in the sense that $t\in \sfT$ implies  all of its initial segments belong to $\sfT$. 
 For $s$ and $t$ we write $s\sqsubset t$ if $s$ is a proper initial segment of $t$. 
A \emph{branch} of $\sfT$ is $x\in \sfX^\bbN$ such that $x\rs m\in \sfT$ for all $m\in \bbN$. 
It is convenient to turn trees upside down, and a tree is called \emph{well-founded} if it has no infinite branches. 
\end{Definition}

\newpar{}\begin{Lemma} \label{L.TreeRank} If $M$ is a transitive model of \ZF{} and $\sfT\in M$ is a tree on some  set $\sfX$, then the assertion `$\sfT$ is well-founded' is absolute between $M$ and any transitive model $N$ of \ZF{} (including the universe itself) that includes~$M$. 
	\end{Lemma}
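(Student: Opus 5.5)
The plan is to use the classical rank characterization of well-foundedness: a tree $\sfT$ is well-founded if and only if there is a rank function $\rho\colon \sfT\to \mathrm{Ord}$ such that $s\sqsubset t$ in $\sfT$ implies $\rho(t)<\rho(s)$. With this equivalence, absoluteness splits into two directions, one handled by ranks and one by branches. Throughout, we use the basic absoluteness facts for transitive models, which follow from the induction on bounded-quantifier formulas mentioned in the Convention above. Being a tree, being a finite sequence, being an ordinal, and $\sqsubset$ are all expressible with bounded quantifiers, so they mean the same thing in $M$, in $N$, and in $V$. Also, $\bbN^M=\bbN$.

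The first direction shows that if $\sfT$ is well-founded in $M$, then it is well-founded in $N$. First prove in ZF (no Choice needed) that a well-founded tree has a rank function. Define $\rho(t)=\sup\{\rho(s)+1 : s\in\sfT,\ t\sqsubset s\}$ by recursion on the well-founded relation $\sqsupset$ restricted to $\sfT$. This uses the recursion theorem for well-founded relations, which is provable in ZF. Applying this inside $M$ gives $\rho\in M$. The statement "$\rho$ is a function from $\sfT$ into the ordinals that is strictly decreasing along $\sqsubset$" is bounded-quantifier in the parameters $\rho$ and $\sfT$, so it holds in $N$ as well. Now suppose $x$ were a branch of $\sfT$ in $N$. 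Then $\rho(x\rs m)$, for $m\in\bbN$, would be a strictly decreasing sequence of ordinals in $N$, which is impossible. So $\sfT$ is well-founded in $N$. Note that this argument only uses the trivial direction, that a rank function excludes branches, inside $N$.

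The second direction shows that if $\sfT$ is ill-founded in $M$, it is ill-founded in $N$. In this case there is a branch $x\in M$, that is, $x\in\sfX^\bbN$ with $x\rs m\in\sfT$ for all $m$. Since $M\subseteq N$, $\bbN$ is the same in both models, and being a branch is bounded-quantifier in $x$ and $\sfT$, the same $x$ is a branch of $\sfT$ in $N$. Combining the two directions gives the equivalence, and taking $N=V$ gives the case of the universe.

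The main obstacle is the step in the first direction that produces the rank function without Choice, and it needs care in one respect. The usual proof that a well-founded tree has no infinite branch is fine, but the converse inside $M$ ("no branch implies every nonempty subset has a $\sqsubset$-maximal element") would normally use DC. This converse is not actually needed, provided "well-founded" is understood as having no infinite branches, as in the Definition. What we need is to build $\rho$ in $M$ from the absence of branches in $M$. So the first thing to try is to define $\rho$ directly. Let $\rho(t)$ be the least ordinal $\alpha$ such that $t$ belongs to the $\alpha$-th stage of the iterated "derivative" that removes nodes all of whose extensions have already been removed. This stage-wise pruning is a transfinite recursion needing no Choice. Then check that the nodes never removed form a subtree without terminal nodes, that is, a pruned subtree. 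In a pruned nonempty tree one would like to pick a branch, but this is a dependent choice over $\sfX$. To avoid it, I would remark that for the paper's purposes $\sfX$ can be well-ordered in $M$ (e.g. $\bbN$ or an ordinal), in which case the leftmost branch is definable. In general, if needed, I would adopt the rank-function characterization as the definition of well-foundedness, which is the convention under which the lemma is cleanly absolute.
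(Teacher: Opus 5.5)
Your argument is the same as the paper's: a branch lying in $M$ remains a branch in $N$, and if $\sfT$ is well-founded in $M$, the rank function $\rho_{\sfT}$ computed in $M$ still excludes branches in $N$.

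Your extra caution is justified. It pinpoints a step the paper passes over: ``Since $\sfT$ is well-founded, the domain of $\rho_{\sfT}$ is equal to $\sfT$''. The nodes that never receive a rank form a pruned subtree. Deducing from the absence of branches that this subtree is empty needs an instance of \DC{} over $\sfX$. That instance is free only when $\sfX$ is well-orderable in $M$, where you can take least immediate extensions.

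In fact the lemma as literally stated fails. Suppose $M$ contains a Russell set $X=\bigsqcup_n X_n$ as in Proposition~\ref{P.Russell.example}. Let $\sfT$ consist of the finite sequences $s$ with $s(i)\in X_i$ for all $i<|s|$. A branch of $\sfT$ is a choice function for $(X_n)$, so $\sfT$ has no branch in $M$, and no node of it ever gets a rank. But $\sfT$ does have a branch in $N=V$ as soon as countable choice (in particular \ZFC) holds in $V$. So your restriction to well-orderable $\sfX$, or your switch to the rank-function definition, is necessary rather than merely convenient. The restriction still covers the trees on $\bbN\times\bbN$ and $\bbN\times\aleph_1$ used in Theorems~\ref{T.Analytic} and~\ref{T.Shoenfield}.

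Two points in your write-up need repair.

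First, your remark that the converse ``is not actually needed, provided `well-founded' is understood as having no infinite branches'' is backwards. Under the branch definition, producing $\rho$ from the absence of branches is exactly that converse. Your first direction's appeal to ``recursion on the well-founded relation $\sqsupset$'' silently assumes it. Only under the rank-function definition does the converse become unnecessary.

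Second, under the rank-function definition your second direction must be redone, since the lack of a rank function in $M$ does not give a branch in $M$ without \DC. Argue instead as follows. By the recursion theorem applied in $M$, there is then a nonempty $S\subseteq\sfT$ in $M$ with no $\sqsubset$-maximal element. $S$ keeps this property in $N$. There it rules out any rank function, since a node of $S$ of least rank would be $\sqsubset$-maximal in $S$.
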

	
	\begin{proof} Since $M$ is transitive, we have $\sfX\subseteq M$. If $\sfT$ has an infinite branch $x$ in $M$, then $x$ is still an infinite branch in $N$. Assume $\sfT$ is well-founded in~$M$.  Recursively define
	rank function $\rho_\sfT$ from $\sfT$ into the ordinals by 
	\[
	\rho_\sfT(s)=\sup\{\rho_\sfT(t): s\sqsubset t, t\in \sfT\}+1. 
	\]
	The supremum of the range of $\rho_\sfT$ is an ordinal of cardinality no greater than that of~$\sfT$, and since $M$ is a model of \ZF{} this ordinal is included in $M$. 
	
	Since $\sfT$ is well-founded, the domain of $\rho_\sfT$ is equal to $\sfT$ and $s\sqsubset t$ implies $\rho_\sfT(s)>\rho_\sfT(t)$.  If $x$ is an infinite branch of $\sfT$ in $N$, then $\rho_\sfT(x\rs n)$, for $n\in \bbN$, is an infinite decreasing sequence of ordinals; contradiction. 
	\end{proof}
	
\newpar{} \begin{Theorem}\label{T.Analytic}
	Every $\bSigma^1_1$ set is absolute for transitive models of \ZF{} that contain the parameter from which the set is defined. 
	Every $\bSigma^1_1$ statement is absolute for transitive models of \ZF{} that contain the parameter from which the set is defined.
	\end{Theorem}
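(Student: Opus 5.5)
We reduce everything to Lemma~\ref{L.TreeRank} by coding $\bSigma^1_1$ sets as projections of trees. By Kuratowski's theorem (Theorem~\ref{T.Isomorphic}) and the fact that Borel isomorphisms preserve $\bSigma^1_1$ sets, we may assume the ambient standard Borel space is the Baire space $\bbNN$. A standard fact from descriptive set theory (provable in \ZF{}, since all constructions are from countable codes) is that a set $\sfA\subseteq\bbNN$ is $\bSigma^1_1$ if and only if it is the projection of a closed subset of $\bbNN\times\bbNN$. That is, there is a tree $\sfT$ on $\bbN\times\bbN$ with
\[
\sfA=\{x\in\bbNN\mid (\exists y\in\bbNN)\ (x,y) \text{ is a branch of } \sfT\}.
\]
The tree $\sfT$ is a countable object, coded by a real, and this real is the parameter from which $\sfA$ is defined.

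Fix a transitive model $M$ of \ZF{} containing $\sfT$, and fix $x\in\bbNN\cap M$. Form the section tree $\sfT_x=\{t\mid (x\rs |t|,t)\in\sfT\}$, a tree on $\bbN$. It is defined from $x$ and $\sfT$ by a formula with only bounded quantifiers. Hence $\sfT_x\in M$, and its computation in $M$ agrees with its computation in $V$. We then have:
\[
x\in\sfA \iff \sfT_x \text{ is not well-founded},
\]
and the same equivalence holds inside $M$ (relativized to $M$). By Lemma~\ref{L.TreeRank}, applied with $N=V$, the assertion ``$\sfT_x$ is well-founded'' holds in $M$ if and only if it holds in $V$. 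Therefore $x\in\sfA^M$ if and only if $x\in\sfA$, which gives $\sfA^M=\sfA\cap M$.

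For statements, a $\bSigma^1_1$ statement is of the form $\sfA\neq\emptyset$ for a Borel set $\sfA$, or equivalently $(\exists y)\,y\in\sfC$ for a closed $\sfC\subseteq\bbNN$ given by a tree $\sfS$. The same argument applies to $\sfS$ directly: the statement is equivalent to ``$\sfS$ is not well-founded'', and this is absolute by Lemma~\ref{L.TreeRank}.

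The main obstacle is the coding step. We must check that the passage from a Borel code in $\sfX\times\sfY$ to the tree $\sfT$ can be done in \ZF{} and uniformly, so that $M$ computes the same tree from the same parameter. For this I would use the \ZF{} fact that every Borel subset of a Polish space is a continuous image of a closed subset of $\bbNN$, constructed explicitly from the Borel code. The induction along the code requires only countable unions and intersections of explicitly indexed sets, so no Choice is needed. Alternatively, I would take the tree representation as the working definition and note its equivalence with Definition~\ref{Def.Sigma1n}.
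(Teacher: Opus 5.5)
Your proposal is correct and is essentially the paper's proof: reduce to $\bbNN$ via Kuratowski's theorem, write $\sfA$ as the projection of a closed set coded by a tree $\sfT$ on $\bbN\times\bbN$, and apply Lemma~\ref{L.TreeRank} to the section trees $\sfT_x$. You are more explicit than the paper in taking $\sfT$ itself as the parameter, so that $M$ and $V$ compute the same $\sfT_x$; the one point the paper states and you leave implicit is that the Borel isomorphism from Kuratowski's theorem must likewise be definable from a countable code, and hence computed the same way in $M$.
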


\begin{Proof} It suffices to prove the statement for $\bSigma^1_1$ sets. By transitivity (of logical equivalence, pun not intended), it also suffices to prove absoluteness between a fixed transitive model of $\ZF$ and the universe.  We first prove  absoluteness for $\bSigma^1_1$ subsets of the Baire space $\bbNN$. For every analytic subset $\sfA$ of $\bbNN$ there is a closed $\sfB\subseteq (\bbNN)^2$ such that $\sfA$ is the projection of $\sfB$. Let 
	\[
	\sfT=\{(x\rs n,y\rs n): (x,y)\in \sfB, n\in \bbN\}. 
	\]
	Then $\sfT$ is a tree on $\bbN\times \bbN$ and 
	\begin{equation}\label{eq.A=p[T]}
	\sfA=\{x\in \bbNN: (\exists y\in \bbNN) (x\rs n,y\rs n)\in \sfT\}. 
	\end{equation}
In other words, writing (by $|s|$ we denote the length of $s$)
\[
\sfT_x=\{s: (x\rs |s|,s)\in \sfT\}
\]
we have that $\sfA$ is the set of all $x$ such that $\sfT_x$ is ill-founded. By Lemma~\ref{L.TreeRank}, the latter assertion is absolute for transitive models of \ZF{} and therefore for $x\in M$ we have that $x\in \sfA^M$ if and only if $x\in \sfA$, as required. 

For subsets of an arbitrary Polish space $\sfX$,  Theorem~\ref{T.Isomorphic} implies that~$\sfX$ is Borel-isomorphic to $\bbNN$ and that  the isomorphism is definable from an enumeration of a countable generating subset of $\Sigma$, and absolute for transitive models of ZF. Since Borel-isomorphism sends $\bSigma^1_n$ sets to $\bSigma^1_n$ sets, the conclusion follows. 
\end{Proof}

The following is Shoenfield's Absoluteness Theorem (for a reason why this theorem matters see Theorem~\ref{T.Provability}). 

\newpar{}\begin{Theorem}\label{T.Shoenfield}
	Every $\bSigma^1_2$ set, and every $\bSigma^1_2$ statement,  is absolute for transitive models of \ZF{} that contain the parameter from which the set is defined and all countable ordinals. 
\end{Theorem}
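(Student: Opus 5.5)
The plan is to follow the classical route via the Shoenfield tree, reducing to Lemma~\ref{L.TreeRank} exactly as in the proof of Theorem~\ref{T.Analytic}.

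\emph{Reductions.} As there, it suffices to treat $\bSigma^1_2$ subsets of the Baire space $\bbNN$. A $\bSigma^1_2$ statement is the assertion that a $\bSigma^1_2$ set (with a dummy variable) is nonempty, so the statement case follows from the set case. By Theorem~\ref{T.Isomorphic} any standard Borel space is Borel-isomorphic to $\bbNN$ via an isomorphism coded by a real parameter, and this isomorphism and the coding are absolute, so the reduction transfers. By transitivity of equivalence it suffices to compare a fixed transitive model $M\ni a$ containing $\omega_1$ with the universe $V$.

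\emph{Normal form.} Fix a $\bSigma^1_2$ set $\sfA=\{x\mid(\exists y)(x,y)\in \sfC\}$ with $\sfC\subseteq(\bbNN)^2$ a $\bPi^1_1$ set. Let $\sfB$ be a closed set whose projection is the complement of $\sfC$, and let $\sfS$ be the corresponding tree on $\bbN^3$ coded by $a$. Then $(x,y)\in \sfC$ if and only if the section tree $\sfS_{x,y}$ is well-founded.

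\emph{The Shoenfield tree.} The central step is to replace ``$\sfS_{x,y}$ is well-founded'' by ``there is an order-preserving map from $\sfS_{x,y}$ (under reverse extension) into $\omega_1$.'' This equivalence holds because a well-founded countable tree has a countable rank, using the rank function from the proof of Lemma~\ref{L.TreeRank}; the converse is immediate. Fix an enumeration $(s_k)$ of $\bbN^{<\bbN}$. Define a tree $\sfT$ on $\bbN\times\bbN\times\omega_1$ whose nodes are triples $(x\rs n, y\rs n, h\rs n)$, where $h(k)$ is a candidate rank for $s_k$ and the finite partial map is order-preserving on those $s_k$, $k<n$, that are determined to lie in $\sfS_{x,y}$ by the first $n$ coordinates. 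Then
\[
x\in\sfA \iff \sfT_x \text{ is ill-founded},
\]
and $\sfT$ is definable in $M$ from $a$ and $\omega_1$. Here the hypothesis $\omega_1\subseteq M$ is used: $\sfT^M=\sfT$ because the definition of $\sfT$ is bounded-quantifier in the parameters $\omega_1$ and $\sfS$.

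\emph{Conclusion.} For $x\in M$, Lemma~\ref{L.TreeRank} gives that $\sfT_x$ is ill-founded in $M$ if and only if it is ill-founded in $V$. Applying the equivalence above in both $M$ and $V$ yields $\sfA^M=\sfA\cap M$.

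The main obstacle is the correctness of that equivalence inside $M$. The rank of a countable well-founded tree must be below $\omega_1^M$, which is at most $\omega_1$, so the ranks fit. Since this argument uses only recursion on a countable tree, it works without Choice in $M$. The bookkeeping of the order-preserving condition on finite approximations also needs care: at stage $n$ one checks only pairs $s_i\sqsubset s_j$ with $i,j<n$ whose membership in $\sfS_{x,y}$ is decided by $x\rs n, y\rs n$.
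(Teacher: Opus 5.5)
Your route is the one the paper's guide indicates: the Shoenfield tree combined with Lemma~\ref{L.TreeRank}. The construction of $\sfT$, the bookkeeping on finite approximations, and your remark that ranks computed in $M$ lie below $\omega_1^M\le\omega_1$ with no Choice needed are all correct.

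There is one step that does not follow from the hypothesis: the claim that $\sfT$ is definable in $M$ from the parameter $\omega_1$. The theorem only assumes that $M$ contains all countable ordinals, i.e.\ $\omega_1\subseteq M$. You need $\omega_1\in M$ to get $\sfT\in M$ and invoke Lemma~\ref{L.TreeRank}. These differ for set models of height exactly $\omega_1$, for instance $M=L_{\omega_1}$ when $\omega_1$ is inaccessible in $L$. There $\omega_1^M<\omega_1$ and $\sfT\notin M$. The paper's one-line guide, which also speaks of a tree on $\aleph_1$, is silent on this point as well.

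The repair is short and uses the hypothesis exactly as stated.
\begin{itemize}
\item The inclusion $\sfA^M\subseteq\sfA$ needs no ordinals. If $y\in M$ witnesses $x\in\sfA^M$, then the tree $\sfS_{x,y}\in M$ is well-founded in $M$, hence in $V$ by Lemma~\ref{L.TreeRank}.
\item For $\sfA\cap M\subseteq\sfA^M$, fix a witness $y\in V$. The countable well-founded tree $\sfS_{x,y}$ maps order-preservingly into some countable ordinal $\alpha$, and $\alpha\in M$ by hypothesis. Let $\sfT^\alpha$ be your tree with $\omega_1$ replaced by $\alpha$. It belongs to $M$ by your bounded-quantifier argument, and $\sfT^\alpha_x$ is ill-founded in $V$, hence in $M$ by Lemma~\ref{L.TreeRank}. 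Any branch $(y',h')\in M$ shows that $\sfS_{x,y'}$ is well-founded in $M$, i.e.\ $x\in\sfA^M$.
\end{itemize}
For models containing all ordinals, which is how the theorem is applied later in the paper, your argument works as written.
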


\begin{ProofOf}{Guide to the Proof of Theorem \ref{T.Shoenfield}} As in the proof of Theorem~\ref{T.Analytic}, it suffices to prove the statement for $\bSigma^1_2$ subsets of $\bbNN$.  The proof in this case  can be found in the literature (\cite[Corollary~7.15]{drake1974set}, \cite[Theorem~8F.10]{Mo:Descriptive}, \cite[Theorem~13.15]{kanamori2008higher};  in \cite[\S 36.D]{Ke:Classical} the key object for the proof, a Shoenfield tree, is defined for other purposes).  
	The salient point is that the Shoenfield tree is a tree $\sfT$ on $\bbN\times \aleph_1$ such that $\sfA$ satisfies the analog of equation~\eqref{eq.A=p[T]}, $x\in \sfA$ if and only if $\sfT_x$ has an infinite branch, and Lemma~\ref{L.TreeRank} applies . 
\end{ProofOf}

\newpar{}Absoluteness of $\bPi^1_2$ sets and statements follows by taking complements and negations. 
It should be noted that $\bSigma^1_1$ sets (and statements) are absolute for transitive models of \ZF{} that contain the parameter from which the set is defined, even if the models are countable.
Example \ref{Ex.Delta12WO} shows that absoluteness for $\bSigma^1_3$ sets can fail even between models of \ZFC{} that contain all ordinals. Absoluteness for all $\bSigma^1_3$ sets, and even all projective sets, can also be achieved, but for a more restrictive class of models and under the assumption of the existence of large cardinals (see the remark following \cite[Theorem~15.6]{kanamori2008higher}).

\newpar{} The requirement in Theorem~\ref{T.Shoenfield} that the model  be well-founded and contain all countable ordinals is not as restrictive as it may appear to be. The standard set-theoretic methods for proving independence with $\ZF$---that is, forcing and passing to inner models (see \cite{Ku:Set}, \cite{Jech:SetTheory}) start from a model $M$ of $\ZF$ ($\ZFC$,  a sufficiently large fragment of one of these theories\dots)  and produce a larger model $N$ of $\ZF$ (and the statement whose relative consistency with $\ZF$ is being proved) that has the same ordinals as~$M$. Therefore, in $N$ the model $M$ contains \emph{all} ordinals and therefore satisfies the assumptions of Shoenfield's theorem as applied in $N$. Thus $M$ and $N$ satisfy the same $\bSigma^1_2$ statements. 

\newpar{} Even more is true. 
Perhaps surprisingly, external ill-foundedness does not prevent a model from carrying out correct internal reasoning about well-foundedness. If $M$ is a model of \ZF{} then $M$ satisfies the statement `there is no decreasing sequence of ordinals' even if it is externally ill-founded  (see \S\ref{S.Ultraproduct} below for an example). 
It also satisfies the statement `there are uncountably many ordinals', because this is a theorem of \ZF, 
and therefore has its own $\aleph_1$, denoted $\aleph_1^M$. Note that it is a countable  ordinal if $M$ is countable and transitive, but  
externally, $\aleph_1^M$ need not even be an ordinal when $M$ is ill-founded.
Let $N$ be any submodel of $M$ that is a model of \ZF, 
internally transitive (that is, $M\models (\forall x\in N)(x\subseteq N)$)  and contains all of~$M$'s ordinals smaller than $\aleph_1^M$. 
Since Shoenfield's Absoluteness Theorem is provable in \ZF, every model of \ZF{} satisfies its conclusion about internally transitive submodels. Hence $M$ satisfies that whenever $N\subseteq M$ is internally transitive  and contains all ordinals below $\aleph_1^M$, the models $M$ and $N$ agree on all $\bSigma^1_2$ statements whose parameters belong to $N$.  
This discussion proves the following.

\newpar{}\begin{Corollary}\label{C.IllFounded}
	Every $\bSigma^1_2$ or $\bPi^1_2$ statement $\varphi$ is absolute between any two models $M_1$ and $M_2$ of \ZF{} which are transitive and contain all countable ordinals (and parameters of $\varphi$) in the interpretation of "transitive" and "all countable ordinals" within some model $M$ of \ZF{} that has both $M_1$ and $M_2$ as elements, or as definable classes.  \qed
\end{Corollary}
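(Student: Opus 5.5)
The plan is to reduce Corollary~\ref{C.IllFounded} to Theorem~\ref{T.Shoenfield}, applied \emph{inside} the ambient model $M$ instead of in the real universe. The key observation is that Shoenfield's Absoluteness Theorem is itself a theorem of \ZF. More precisely, the following sentence $\Theta_\varphi$ is provable in \ZF{} for each fixed $\bSigma^1_2$ formula $\varphi(\bar a)$: ``for every class $N$ which is a transitive model of \ZF, contains all countable ordinals and contains $\bar a$, we have $\varphi(\bar a)^N\leftrightarrow\varphi(\bar a)$.'' For a definable class $N$ this is a schema, one instance for each defining formula. For a set $N$ it is a single sentence quantifying over $N$, and ``model of \ZF'' is read via the formalized satisfaction relation for set models. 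Since $M\models\ZF$, soundness gives $M\models\Theta_\varphi$ for every such $\varphi$, whether or not $M$ is well-founded externally.

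First I fix the statement $\varphi$ with parameters $\bar a\in M_1\cap M_2$. Then I apply $\Theta_\varphi$ in $M$ twice: once with $N=M_1$ and once with $N=M_2$. The hypotheses say exactly that $M$ believes each $M_i$ is transitive, is a model of \ZF, and contains $\aleph_1^M$ together with $\bar a$. So $M\models(\varphi^{M_1}\leftrightarrow\varphi)$ and $M\models(\varphi^{M_2}\leftrightarrow\varphi)$, and therefore $M\models(\varphi^{M_1}\leftrightarrow\varphi^{M_2})$.

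Next I unwind this to an external statement. This uses the fact that interpretation composes, $(\varphi^{M_i})^M=\varphi^{M_i}$, because $M_i$ is defined in $M$ or is an element of $M$. Hence $M_1\models\varphi$ if and only if $M_2\models\varphi$, which is the required absoluteness. The $\bPi^1_2$ case follows by negation, as in the paragraph after Theorem~\ref{T.Shoenfield}.

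The main obstacle is the bookkeeping needed to make ``$M\models\Theta_\varphi$'' and ``$\varphi^{M_i}$ computed in $M$'' agree with what is meant externally. This is delicate when $M$ is ill-founded, since its naturals and reals may be nonstandard. I would handle it by keeping every relativization syntactic: I never claim anything about real countable ordinals, only that $\varphi^{M_i}$ (a formula relativized inside $M$) holds in $M$. When $M_i$ is a set element of $M$ rather than a class, I use the formalized satisfaction relation of $M$ and note that for a standard formula $\varphi$ it agrees with the relativization, again provably in \ZF.
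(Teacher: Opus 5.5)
Your proposal is correct and is essentially the paper's argument: Shoenfield's theorem is a theorem of \ZF, so it holds inside $M$ (ill-founded or not), and applying it there to $M_1$ and to $M_2$ and chaining the two equivalences through $M$ gives the result. Your bookkeeping (a schema for definable classes, formalized satisfaction for set models, composing relativizations) is more careful than the paper's sketch. One adjustment is worth noting: if ``model of \ZF'' is read externally rather than inside $M$, quantify only over models of the finite fragment of \ZF{} that Shoenfield's proof actually uses, since a non-$\omega$-model $M$ need not believe that $M_i$ satisfies its own (possibly nonstandard) formalized \ZF.
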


\newpar{}\label{S.L[x]} We need one additional ingredient in order to handle parameters.  Suppose $x$ is a code for the parameters  occurring in~a  formula~$\varphi$ of second-order arithmetic (\ref{S.Sigma-1-n}). This code can be chosen in any recursively defined Polish space, and  it will be convenient to choose $x$ in $\cP(\bbN)$.  For a real $x$, $L[x]^M$ consists of all sets constructible from $x$ as computed in $M$ (\cite[Definition 11.6.29]{Ku:Set}). Since $x$ is a set of ordinals, by \cite[Exercise 11.6.30]{Ku:Set}, $L[x]^M$ is a model of \ZFC. We therefore have the following consequence of Corollary~\ref{C.IllFounded}. 

\newpar{}\begin{Corollary}\label{C.IllFounded-with-parameters}
Every $\bSigma^1_2$ or $\bPi^1_2$ statement  $\varphi$  with parameters $x$ is absolute between  $M$ and $L[x]^M$ for any model $M$ of \ZF.
\end{Corollary}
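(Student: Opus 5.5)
The plan is to apply Corollary~\ref{C.IllFounded}, working inside the given model $M$ of \ZF, with $M_1=M$ itself and $M_2=L[x]^M$. For this, three things must hold internally in $M$. First, $L[x]^M$ is a definable class of $M$ and satisfies \ZF{} (indeed \ZFC, by \cite[Exercise 11.6.30]{Ku:Set}). Second, $L[x]^M$ is transitive in the sense of $M$. Third, $L[x]^M$ contains all of $M$'s countable ordinals and the parameter $x$. Once these are checked, Corollary~\ref{C.IllFounded} applied with $M$ as the ambient model yields that $\varphi$ holds in $M$ if and only if it holds in $L[x]^M$.

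\textbf{Definability and transitivity.} Work inside $M$. The hierarchy $L_\alpha[x]$ is defined by transfinite recursion on the ordinals, with $L_{\alpha+1}[x]$ the collection of subsets of $L_\alpha[x]$ definable over $(L_\alpha[x],\in,x\cap L_\alpha[x])$. Hence $L[x]=\bigcup_\alpha L_\alpha[x]$ is a definable class of $M$. A routine induction shows that each $L_\alpha[x]$ is transitive, so $M\models$ ``$L[x]$ is transitive.''

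\textbf{Ordinals and the parameter.} The standard argument gives $\alpha\in L_{\alpha+1}[x]$, again by induction. Therefore $L[x]$ contains every ordinal of $M$, in particular every ordinal below $\aleph_1^M$. Since $x\subseteq\bbN$ is definable over $L_\omega[x]$ from the predicate, we get $x\in L_{\omega+1}[x]$.

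\textbf{Main obstacle.} All of the above is internal \ZF{} reasoning in $M$. The delicate point is that $M$ may be externally ill-founded, so none of these facts can be read off externally. They must be established as theorems of \ZF{} about the class $L[x]$, which are then true in every model of \ZF, and Corollary~\ref{C.IllFounded} must be invoked in its internal form. This is exactly the situation Corollary~\ref{C.IllFounded} was formulated to handle: Shoenfield's theorem is itself a \ZF-theorem and so applies inside $M$.

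A secondary check concerns the formula $\varphi$ itself. Its interpretation in $L[x]^M$ must be the one obtained by restricting the standard Borel spaces (coded as subsets of $\bbNN$) to $L[x]^M$. This is how $\bSigma^1_2$ statements are read in the paper, via the coding in Lemma~\ref{L.Sigma1n-formula}. With that reading the absoluteness follows.
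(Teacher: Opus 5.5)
Your proposal is correct and follows the paper's own route. The paper likewise observes, citing \cite[Exercise 11.6.30]{Ku:Set}, that inside $M$ the class $L[x]^M$ is a transitive definable class model of \ZFC{} containing $x$ and all of $M$'s ordinals. It then applies Corollary~\ref{C.IllFounded} with $M$ as the ambient model, $M_1=M$ and $M_2=L[x]^M$. Your extra checks, $\alpha\in L_{\alpha+1}[x]$ and $x\in L_{\omega+1}[x]$, just spell out details the paper leaves implicit.
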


\newpar{} The reader may wonder about the role of the assumption in Theorem~\ref{T.Shoenfield} that models contain all countable ordinals. Consider the statement $\varphi$:  `There exists a well-founded, countable model of \ZFC'. This is a $\bSigma^1_2$ statement. If it is true, then there is a smallest model of \ZFC, $M$, in which it is true, as otherwise one can find a strictly decreasing sequence of well-founded models. Then $\varphi$ is not true in $M$, although it holds in $V$. 
\goodbreak
\subsection{Coding} 
\newpar{} If $M\subseteq N$ are transitive models of ZF, then a Borel set in $M$ is not necessarily a Borel set in $N$. This is for example the case if $M$ is the G\"odel's universe $L$ and $N$ is Cohen's model in which the Continuum Hypothesis fails. Then $\bbR^M$, the set of real numbers in $M$, is an uncountable set of reals in $N$ whose cardinality is $\aleph_1<2^{\aleph_0}$ and therefore it (or any of its uncountable Borel subsets) cannot be Borel in $N$.

Coding of Borel subsets of $\bbR$ or of any other Polish space was developed in \cite{So:Model}, but we will not need its full power. The Baire space $\bbNN$ has a natural code, the tree $\bbNlN$ of all finite sequences of natural numbers. Thus the interpretation of $\bbNN$ in a transitive model $M$ of \ZF{} is $(\bbNN)^M$, the set of all branches of the tree $\bbNlN$ that belong to $M$.

\subsection{Provability}

The following consequence of Shoenfield's absoluteness is folklore among logicians, but we could not find a convenient reference. Since it is particularly useful outside logic, we record it here (for the definition of $L[x]$ see \ref{S.L[x]}).

\newpar{}\begin{Theorem}\label{T.Provability}
Every $\bSigma^1_2$ statement, and every   $\bPi^1_2$ statement,  that is provable in \ZFC{} or even in \ZF\, +\, $V=L$  (or $V=L[x]$, if $\varphi$ has a parameter  coded by $x$) is provable in \ZF.
\end{Theorem}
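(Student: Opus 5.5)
The plan is to argue semantically: via the completeness theorem, it suffices to show that $\varphi$ holds in every model of \ZF. Suppose $\varphi$ is a $\bSigma^1_2$ or $\bPi^1_2$ statement with parameter coded by a real $x\in\cP(\bbN)$, and that \ZF+$V=L[x]$ proves $\varphi$ (the \ZFC{} and $V=L$ cases are special cases). \ZFC{} proves the axioms of \ZFC{} plus $V=L[x]$ relativized to $L[x]$, and $L[x]$ satisfies $V=L[x]$, so a proof of $\varphi$ in \ZFC{} can be relativized to $L[x]$ as well. If $\varphi$ failed in ZF, completeness would give a model $M$ of \ZF+$\lnot\varphi$, and $x$ would be an element of $M$ (more precisely, the parameter is interpreted in $M$ by some real $x^M$). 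We will derive a contradiction.

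The key steps, in order, are as follows. First, work inside $M$ and form the inner model $L[x]^M$, as in \ref{S.L[x]}. By \cite[Exercise 11.6.30]{Ku:Set}, relativized to $M$, this is a model of \ZFC{} and of $V=L[x]$. This uses only the \ZF{} theorem that $L[x]$ satisfies these axioms, so it holds in $M$ even if $M$ is ill-founded. Second, the relativization of the given proof shows $\varphi^{L[x]^M}$; more carefully, the \ZF{} theorem ``$\varphi^{L[x]}$'' (relativization of a proof from \ZF+$V=L[x]$, each axiom of which is a \ZF-theorem about $L[x]$) holds in $M$. Third, apply Corollary~\ref{C.IllFounded-with-parameters}: $\varphi$ is absolute between $M$ and $L[x]^M$. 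Hence $\varphi$ holds in $M$, which is the contradiction. Therefore \ZF{} proves $\varphi$. One can phrase this purely syntactically: \ZF{} proves $\varphi^{L[x]}$, and \ZF{} proves Shoenfield's theorem in the form $\varphi^{L[x]}\leftrightarrow\varphi$, so \ZF{} proves $\varphi$. This avoids models altogether, and it is the version I would write.

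The main obstacle is the subtle point that Shoenfield's theorem is stated for transitive models containing all countable ordinals, while an arbitrary model of \ZF{} is neither transitive nor well-founded. This is exactly what Corollary~\ref{C.IllFounded} and Corollary~\ref{C.IllFounded-with-parameters} handle. Inside $M$, $L[x]$ is internally transitive and contains all ordinals of $M$, in particular those below $\aleph_1^M$. Also, the \ZF-provable form of Shoenfield absoluteness applies internally. A secondary care point is that $x$ must code the parameter in a recursively presented space, so that $\varphi$ means the same thing in $L[x]$ and in $V$: the Borel codes and the standard Borel spaces (e.g.\ $\bbNN$ via the tree $\bbNlN$) must be interpreted uniformly. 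I would verify that this follows from the coding remarks preceding the statement.
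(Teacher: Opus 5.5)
Your proposal is correct and follows the paper's proof. In an arbitrary model $M$ of \ZF{}, the inner model $L[x]^M$ satisfies \ZF{} $+$ $V=L[x]$, so $\varphi$ holds there; Corollary~\ref{C.IllFounded-with-parameters} transfers $\varphi$ to $M$, and the completeness theorem finishes. Your syntactic version (\ZF{} proves $\varphi^{L[x]}$ by relativization and proves $\varphi^{L[x]}\leftrightarrow\varphi$ by Shoenfield absoluteness) is the same argument with the models removed, and it has the small advantage of avoiding the paper's informal phrase ``a model of \ZF{} that contains $x$''.
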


\begin{Proof} Fix $\varphi$ that is $\bSigma^1_2$ or $\bPi^1_2$ and provable in \ZF\,+\,$V=L[x]$, where $x\in \cP(\bbN)$ codes the parameters of $\varphi$. 
		Fix a model $M$ of \ZF{} that contains $x$. This code can be chosen in any recursively defined Polish space, and  it will be convenient to choose $x$ in $\cP(\bbN)$. By our assumption, $\varphi$ holds is $L[x]^M$ (i.e., $L[x]$ as computed in $M$) and by  Corollary~\ref{C.IllFounded-with-parameters}, $\varphi$ is absolute between  $M$ and $L[X]^M$. 	
	Since $M$ was an arbitrary model that contains $x$, this proves that $\varphi$ holds in every model of \ZF{} that contains $x$. 
By the Completeness Theorem of first-order logic, it follows that $\varphi$ is a theorem of \ZF. 
\end{Proof}

\newpar{}Theorem~\ref{T.Provability}  says, among other things,  that Choice cannot be essential in any proof of a $\bSigma^1_2$ or $\bPi^1_2$ statement. If such statement can be proved using the Axiom of Choice, then there exists a proof that avoids Choice altogether. 

\newpar{} 
A standard  modification of the proof of Theorem~\ref{T.Provability} shows the following (the assumption \eqref{T.Provable.3} is weaker than either of the previous two since both \OCAT+\MA{} and $\diamondsuit_{\aleph_1}$ hold in forcing extensions of any model of ZFC, see \cite[Theorem V.4.1]{Ku:Set} for \MA{} and \cite[p. 142]{velickovic1992applications} for \OCAT+\MA). 

\newpar{}\begin{Theorem}
	Assume $\varphi$ is a  $\bSigma^1_2$ statement or a $\bPi^1_2$ statement  and that in addition it satisfies one of the following. 
	\begin{enumerate}
		\item $\varphi$ follows from \ZFC+\OCAT+\MA. 
		\item $\varphi$ follows from \ZFC+$\diamondsuit_{\aleph_1}$. 
		\item \label{T.Provable.3} $\varphi$ holds in some forcing extension of the constructible universe $L$.
	\end{enumerate} 
	Then $\varphi$ is provable in \ZF. 
	\qed 
\end{Theorem}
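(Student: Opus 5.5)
The plan is to reduce all three cases to case \eqref{T.Provable.3} and then run the argument of Theorem~\ref{T.Provability} with $L[x]$ replaced by a forcing extension of $L[x]$. Fix $\varphi$, $\bSigma^1_2$ or $\bPi^1_2$, with parameters coded by $x\in\cP(\bbN)$; in the parameter-free case $x=\emptyset$ and $L[x]=L$, and with parameters \eqref{T.Provable.3} should read ``some forcing extension of $L[x]$''.

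For (1) and (2): $L[x]$ satisfies \ZFC{} (\ref{S.L[x]}), and by the cited results (\cite[Theorem V.4.1]{Ku:Set}, \cite[p.~142]{velickovic1992applications}) there is a forcing notion $\sfP\in L[x]$ whose generic extensions satisfy \OCAT+\MA. For $\diamondsuit_{\aleph_1}$ one can take the trivial forcing, since $L[x]\models\diamondsuit_{\aleph_1}$ (\cite{Ku:Set}). Since $\varphi$ is a theorem of the respective theory, it holds in every such extension. This reduces (1) and (2) to \eqref{T.Provable.3}. For the reduction to make sense formally, I would read \eqref{T.Provable.3} schematically: there is a definable-in-$L[x]$ forcing $\sfP$ such that \ZFC+$V=L[x]$ proves ``$\Vdash_\sfP\varphi$''.

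Now fix an arbitrary model $M$ of \ZF{} containing $x$. By Corollary~\ref{C.IllFounded-with-parameters}, $M$ and $L[x]^M$ agree on $\varphi$, so it suffices to show $\varphi^{L[x]^M}$. Work inside $L[x]^M$ and argue by contradiction. Suppose $\lnot\varphi$ holds there. Let $G$ be $\sfP$-generic over a countable elementary submodel, or use the Boolean-valued approach, and consider $N=L[x][G]$. Then $N$ is a transitive model of \ZFC{} with the same ordinals as $L[x]$, so $L[x]$ contains all of $N$'s countable ordinals. By Theorem~\ref{T.Shoenfield} applied in $N$ (or via Corollary~\ref{C.IllFounded}), $\varphi$ is absolute between $L[x]$ and $N$. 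Hence $\lnot\varphi$ holds in $N$, contradicting $N\models\varphi$. Finally the Completeness Theorem gives that \ZF{} proves $\varphi$.

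The main obstacle is the existence of generics: $L[x]$ need not have a generic filter over itself. I would avoid this in one of two ways. The first is the Boolean-valued formulation: Shoenfield absoluteness is a \ZF-theorem and so holds in $V^{\sfB}$, which yields that $\Vdash_\sfP(\varphi\leftrightarrow\check\varphi)$. The second is to pass to a countable transitive model of a large enough finite fragment of \ZFC+$V=L[x]$ obtained by reflection, where generics exist. In that case one must check that Shoenfield absoluteness only needs the extension to have the same ordinals, which is guaranteed by forcing.
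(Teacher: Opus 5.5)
Your proposal is correct and follows the paper's route. You reduce (1) and (2) to (3) by forcing over $L[x]$, then rerun the proof of Theorem~\ref{T.Provability}, inserting Shoenfield absoluteness between $L[x]^M$ and its forcing extension (which has the same ordinals) before applying Corollary~\ref{C.IllFounded-with-parameters} and the Completeness Theorem. Your treatment of the missing generic (Boolean-valued models, or a countable transitive model of a finite fragment) and the shortcut $L[x]\models\diamondsuit_{\aleph_1}$ for (2) spell out what the paper leaves as ``a standard modification''.
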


\newpar{} Being true in all transitive models  of  \ZFC{} and therefore absolute between all transitive models of \ZFC{} arguably suggests the truth of the given statement. This property however does not guarantee being provable in \ZFC{}. For example, take the sentence that says `\ZFC{} is consistent'. Via G\"odel coding, this is  a $\Pi^0_1$ statement (it says `for every natural number $n$, $n$ does not code a proof that \ZFC{} is inconsistent), in particular  it is arithmetical. Every arithmetical
statement is absolute between transitive models of \ZFC{}. Actually, arithmetical statements are absolute between models that do not contain any nonstandard natural numbers. Equivalently, these are the models whose $\omega$ is well-founded, and such models are called $\omega$-models. However, G\"odel's Incompleteness Theorem implies that this sentence is not provable in \ZFC{} unless \ZFC{} is inconsistent (we take the latter statement on faith).
The fact is that, by completeness of predicate logic, there exists a model of \ZFC{} in which the sentence that asserts `\ZFC{} is inconsistent' holds. In this model there is a nonstandard integer that codes a `proof' of $0=1$ from \ZFC{}.
Such model cannot be an $\omega$-model.

\newpar{}\label{S.Ultraproduct}
Producing a model of an arbitrarily large finite fragment \ZFCstar{} of \ZFC{}\footnote{By G\"odel's Theorem one cannot produce a model of \ZFC{} within \ZFC{}, since this would show that \ZFC{} implies its own consistency.} that is not an $\omega$-model is easy. 
Let $M$, be a transitive models of \ZFCstar{}.  If $\cU$ is a nonprincipal ultrafilter on $\bbN$, then the ultrapower $\prod_{\cU} M$ is (by \L o\'s's Theorem) a model of \ZFCstar{}, but it contains nonstandard integers. Note that \L o\'s's Theorem implies that in this way one cannot produce  a model of the theory \ZFCstar+`\ZFC{} is inconsistent'. 

\newpar{} Another application of absoluteness is worth mentioning. If $\varphi$ is a projective statement that follows from \ZFC{} together with the Continuum Hypothesis, \CH, then $\varphi$ follows from \ZFC. This is because there exists a forcing notion (the Levy collapse of $2^{\aleph_0}$ to $\aleph_1$ by countable conditions) that forces 
\CH{} without adding any new real numbers, and therefore without changing the truth of projective statements. This well-known trick has been applied to operator algebras in  \cite[Proposition 5.19]{eagle2015saturation}, \cite[Lemma~2.2]{gould2026uncountable}, and 
\cite[Lemma~5.7]{gould2026saturated}.

	\section{Applications}
	\subsection{Separable \cstar-algebras and absoluteness}

\newpar{} \label{P.L.states} 
We provide an alternative proof of Lemma~\ref{L.states}.

\begin{Proof} Fix a unital separable \cstar-algebra $A$ and $x\in A$. 
	We need to prove that there is a state $\varphi$ on $A$ such that $\varphi(x^*x)=\|x\|^2$. This is a result of \ZFC, and in order to conclude that it can be proved in ZF, by Theorem~\ref{T.Shoenfield} it will suffice to prove that this is a $\bPi^1_2$ statement with code for $A$ as a parameter.

	Since the map $(a,\varphi)\mapsto \varphi(a)$ is jointly continuous on $A\times \sfS(A)$, the set 
	\[
	Z=\{(a,\varphi)\in A\times \sfS(A)\mid \varphi(a^*a)=\|a\|^2\}
	\]
	is a closed subset of $A\times \sfS(A)$, which is a product of two Polish spaces ($A$ is Polish by the assumption and $\sfS(A)$ is Polish by Lemma~\ref{L.S(A)}).
	
	Therefore the statement $(\forall a\in A)(\exists \varphi\in \sfS(A))\varphi(a^*a)=\|a\|^2$ is $\bPi^1_2$. Thus this is a  theorem of \ZF{} by Theorem~\ref{T.Shoenfield}. 
\end{Proof}

\newpar{}\label{Pf.Spectral}  We can now give a proof of the Spectral Mapping Theorem for polynomials (Theorem~\ref{T.SpectralMapping}).

\begin{Proof}
	Fix $n\geq 1$ and  commuting normal elements $a_1,\dots, a_n$  of a \cstar-algebra $A$.  We first need to prove that $\jsigma(\bar a)$ is nonempty.
	
Since the standard proof that the joint spectrum of  an $n$-tuple of normal operators does not change when passing to a unital \cstar-subalgebra does not require any Choice,   
	by replacing $A$ with $\cst(\bar a,1)$ we may assume that it is separable. Fix a countable dense subset $D$ of $A$. Let 
	\[
\textstyle	B=\{(\bar \lambda, \bar b)\in \bbC^n\times D^n\mid \|\sum_{j\leq n} b_j (a_j-\lambda_j 1)-1\|<1 \}. 
	\]
We claim that $\bar \lambda\in \jsigma(\bar a)$ if and only if $(\forall \bar b\in D^n)(\bar \lambda, \bar b)\notin B$. Suppose that $\bar \lambda\in \jsigma(\bar a)$ and fix $b\in D^n$. Since $a_j-\lambda_j 1$, for $j\leq n$,  generate  a proper ideal, $x=\sum_{j\leq n} b_j (a_j-\lambda_j 1)$ is not invertible, and in particular $\|x-1\|\geq 1$. Therefore $(\bar \lambda, \bar b)\notin B$. 
Conversely, suppose that $\bar \lambda\notin \jsigma(\bar a)$. Then the ideal generated by $a_j-\lambda_j 1$, for $j\leq n$, is improper and $\sum_j d_j (a_j-\lambda_j 1)=1$. Since~$D$ is dense, we can find $\bar b$ such that $(\bar \lambda ,\bar b)\in B$.

 Since $\bar b$ ranges over the countable set $D^n$, and since the section of $B$ at each $\bar b$ is an open subset of $\bbC^n$, we conclude that $\bbC^n\setminus \jsigma(\bar a)$ is an open subset of $\bbC^n$. Therefore $\jsigma(\bar a)$ is closed, and the assertion that it is nonempty is~$\bSigma^1_1$, and therefore absolute by Theorem~\ref{T.Shoenfield}. Since  it is a theorem of \ZFC{} (\cite{Arv:Short}), the conclusion follows. 
Since the joint spectrum of $\bar a$ is included in the bounded set $\{\bar \lambda : |\lambda_j|\leq \|a_j\|$ for all $j\leq n\}$, it is a closed and bounded  subset of $\bbC^n$ and therefore compact. 

It remains to prove that if $f$ is a complex *-polynomial in $2n$ variables, then 
\begin{multline*}
\sigma(f(a_1,\dots, a_n, a_1^*,\dots, a_n^*))\\
=\{f(\lambda_1, \dots, \lambda_n, \bar\lambda_1, \dots, \bar\lambda_n)\mid (\lambda_1,\dots, \lambda_n)\in \jsigma(\bar a)\}. 
\end{multline*}
Fix such $f$. Then the sets 
	$X=\sigma(f(a_1,\dots, a_n, a_1^*,\dots, a_n^*))$ and $\jsigma(\bar a)$ are compact by the first part of this Theorem. The continuous image of the latter, $Y=\{f(\lambda_1, \dots, \lambda_n, \bar\lambda_1, \dots, \bar\lambda_n)\mid (\lambda_1,\dots, \lambda_n)\in \jsigma(\bar a)\}$ is therefore also compact. 

	Finally, the assertion that $X=Y$ is $\bPi^1_1$ because it asserts that the symmetric difference of $X$ and $Y$ (a Borel set) is empty, i.e., that $(\forall \lambda\in \bbC)(\lambda\notin X\Delta Y)$. Therefore, Theorem~\ref{T.Shoenfield} implies that these two sets are equal. 
\end{Proof}

\newpar{} No Choice is needed to prove the Stone--Weierstrass Theorem (\ref{S.CFC}).
Combining this with Theorem~\ref{T.SpectralMapping}, we obtain continuous functional calculus for $n$-tuples of commuting normal operators. 

\newpar{}
\begin{Proposition}\label{FullFunctCalc}
	Let $A$ be a unital \cstar-algebra, $n\geq 1$, and let $\bar a$ be an $n$-tuple of commuting normal operators.  Then $p\mapsto p(\bar a)$ ($p$ is a complex *-polynomial in $n$ commuting variables)
	extends to an isometric
	isomorphism from $C(\jsigma(\bar a))$ onto the uniformly closed subalgebra of $A$
	generated by $\bar a$ and~1.  If $\bar a$ is closed under adjoints, this is $\cst(\bar a,1)$. 
\end{Proposition}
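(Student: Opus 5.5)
Let $B$ denote the uniformly closed subalgebra of $A$ generated by $\bar a$ and $1$, and let $\cP$ denote the $*$-algebra of complex $*$-polynomials in $n$ commuting variables, that is, polynomials in $z_1,\dots,z_n,\bar z_1,\dots,\bar z_n$. For $p\in\cP$ write $p(\bar a)=p(a_1,\dots,a_n,a_1^*,\dots,a_n^*)$.

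The first step is to show that the map $\Phi\colon p|_{\jsigma(\bar a)}\mapsto p(\bar a)$ is well defined and isometric. Since the $a_j$ are commuting normal elements, Fuglede is not needed: $p(\bar a)$ is itself normal, because the $a_j$, $a_j^*$ pairwise commute. So $p(\bar a)^*p(\bar a)=(\bar p p)(\bar a)$ holds by direct algebra. By the Corollary following Theorem~\ref{SpectrumThm}, the norm of a normal element equals its spectral radius. Theorem~\ref{T.SpectralMapping} gives $\sigma(p(\bar a))=p[\jsigma(\bar a)]$. Combining these,
\[
\|p(\bar a)\|=\max\{|p(\bar\lambda,\bar{\bar\lambda})|:\bar\lambda\in\jsigma(\bar a)\}=\|p|_{\jsigma(\bar a)}\|_\infty .
\]
In particular, if two polynomials agree on $\jsigma(\bar a)$, then their difference maps to $0$, so $\Phi$ is well defined. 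It is clearly a unital $*$-homomorphism from the $*$-algebra of restrictions of $*$-polynomials into $A$, and it is isometric. None of this uses Choice; the absoluteness was already spent in Theorem~\ref{T.SpectralMapping}.

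The second step is density. The restrictions of $*$-polynomials to the compact set $\jsigma(\bar a)\subseteq\bbC^n$ form a unital self-adjoint subalgebra that separates points, since the coordinate functions already do. By the Stone--Weierstrass theorem, which holds without Choice (\ref{S.CFC}), this subalgebra is uniformly dense in $C(\jsigma(\bar a))$. Note that $\jsigma(\bar a)$ is a compact metric space, so no subtlety about compactness arises.

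The third step is the extension. An isometry from a dense subspace into the complete space $A$ extends uniquely, with no Choice needed: limits of Cauchy sequences are unique and can be taken along any chosen approximating sequence. The extension is an isometric $*$-homomorphism, and its range is closed because it is an isometric image of a complete space. That range contains all $p(\bar a)$ and is contained in the closure of $\{p(\bar a):p\in\cP\}$, so it equals the closed $*$-subalgebra generated by $\bar a$, $\bar a^*$ and $1$.

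There is one point of care, which is the main subtlety I see: the interpretation of ``the uniformly closed subalgebra generated by $\bar a$ and $1$''. I would read it, as the statement intends, as the closure of the $*$-polynomials. If the tuple $\bar a$ is closed under adjoints, this coincides with the closed (non-$*$) algebra generated by $\bar a$ and $1$, which is then $\cst(\bar a,1)$, and the final sentence of the statement follows. In the other reading, with only holomorphic polynomials in $\bar a$, the claimed isomorphism onto $C(\jsigma(\bar a))$ would fail in general, as \ref{S.cfc.normal} already warns.
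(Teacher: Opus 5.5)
Your proposal is correct and is essentially the paper's argument: the paper simply combines the Spectral Mapping Theorem~\ref{T.SpectralMapping} (together with norm $=$ spectral radius for normal elements) with the choiceless Stone--Weierstrass theorem, and you write out the isometry, density and extension-by-continuity steps that it leaves implicit, including a sensible reading of ``the closed subalgebra generated by $\bar a$ and $1$''. The one thing to tighten is your remark that Fuglede is not needed: if ``commuting normal'' means only $a_ia_j=a_ja_i$, then the commutation of $a_j$ with $a_k^*$ that you use is exactly Fuglede's theorem, so either read the hypothesis as $*$-commutation (as the paper's use of $*$-polynomials tacitly does) or note that Fuglede's theorem holds in \ZF{} (Rosenblum's Liouville argument goes through using Cauchy estimates for the $A$-valued power series $\lambda\mapsto e^{\lambda a^*}be^{-\lambda a^*}$, or apply Theorem~\ref{T.Provability} in the separable algebra $\cst(a,b,1)$).
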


\newpar{}\label{S.HBseparation}
We can now give a proof of the Hahn--Banach separation theorem promised in \ref{P.HBetc} \eqref{5.BS.HBs}. 

\begin{Proof} Fix disjoint convex subsets $A$ and $B$ of a separable normed space~$X$ such that $A$ is open. By replacing $B$ with its closure, we may assume   $B$ is closed. As open and closed subsets of a Polish space $X$, both $A$ and $B$ are coded by real parameters.

In the following we write $X_m$, $X^*_m$ for the $m$-balls in $X$ and $X^*$. 
While $X^*_m$ is not necessarily norm-separable, it is compact and metrizable in the weak$^*$-topology, and we will consider it with the corresponding Borel structure. 

The duality map  from $X\times X^*$ into the scalars, $(\xi,\varphi)\mapsto \varphi(\xi)$,  is jointly continuous on  $X_m\times X^*_n$, when $X$ is considered with the norm topology and~$X^*$ with the weak*-topology,  for all $m,n\geq 1$. This is because for all $\xi,\eta$ in $X_m$ and all $\varphi,\psi$ in $X^*_n$ we have that 
\[
|\psi(\eta)-\varphi(\xi)|
\leq |\psi(\eta-\xi)|+|(\psi-\varphi)(\xi)|
\leq n\|\eta-\xi\|+|(\psi-\varphi)(\xi)|.
\]
The first term is uniformly small when $\eta$ is norm-close to $\xi$, and the second is controlled by a basic weak-* neighbourhood evaluated at $\xi$.
The set 
\begin{align*}
Z=\{(\varphi, \xi, \eta,r): \varphi\in X^*, \ &\xi\notin A\text{ or }\xi\in A\text{  and }\Re(\varphi(\xi))<r, \text{ and }   \\
&\eta\notin B \text{ or  }\eta\in B\text{ and }r\leq \Re(\varphi(\eta)))\}
\end{align*}
is therefore a Borel subset of $X^*\times X^2\times \bbR$, and the existence of a linear functional that separates $A$ from $B$ is equivalent to 
\[
(\exists\varphi)(\exists r)(\forall \xi,\eta\in X)(\varphi,\xi,\eta,r)\in Z. 
\]
This is a $\bSigma^1_2$ statement (with codes for $X$, $A$, and $B$ as parameters), hence it is absolute between transitive models of \ZF{} that contain all countable ordinals. Since it is a theorem of \ZFC, it is a theorem of \ZF.  
\end{Proof}

\newpar{}\label{S.KreinMilman}
We conclude  this section with a proof of the Krein--Milman theorem promised in \ref{P.HBetc}. 

\begin{Proof}
Suppose that $K$ is a compact convex subset of the dual of a separable Banach space $X$. 
By the Banach--Alaoglu theorem, $K$ is a compact metrizable space.  Fix a compatible metric $d$ on $K$. 
The extreme boundary of $K$ is equal to 
\[
\partial K=\bigcap_{n\in \bbN}\{x\in K\mid (\forall y,z\in K)x=(y+z)/2\text{ implies } d(y,z)<1/n\}.
\]
Since the set $\{(y+z)/2: y,z\in K, d(y,z)\geq 1/n\}$ is compact (as a continuous image of a compact set), the set $\partial K$ is a $G_\delta$ set. The assertion that this~$G_\delta$ set is nonempty is $\bSigma^1_1$ (it involves a code for $K$ as a parameter, thus the boldface), and therefore absolute between transitive models of \ZF{} by the $\bSigma^1_1$ Absoluteness Theorem (Theorem~\ref{T.Analytic}). Since it is a theorem of \ZFC{} that every compact convex set has an extremal point, this is also a theorem of \ZF{} by Theorem~\ref{T.Provability}.  

In order to prove that every  separable compact convex set in a locally convex space is the closed convex hull of its extreme points, assume otherwise. Consider the closed convex hull  $C$ of $\partial K$ and fix $x\in K\setminus C$. By the Hahn--Banach separation theorem (Proposition~\ref{P.HBetc} \eqref{5.BS.HBs}) there are  a continuous affine function $\phi\colon K\to \bbR$ and $r\in \bbR$ such that $\phi(x)\leq r<\phi(y)$ for all $y\in C$. Then $K$ has a face exposed by $\phi$ that is disjoint from $C$. By the first part of the proof, this face has an extremal point, and such point is an extremal point of $K$;  contradiction. 
\end{Proof}

\subsection{Other applications}\label{S.other}

We record some other applications of Shoenfield's Absoluteness theorem.

\newpar{} \label{P.CodingC*} We will first need some standard facts about coding separable \cstar-algebras. A discussion analogous to the following applies to Banach spaces, Banach algebras, and separable metric structures in general.  
A Borel space of concrete separable \cstar-algebras was defined in \cite{Kec:C*}. The space $\hat \Xi$  of abstract separable \cstar-algebras defined in \cite[\S 2.4]{FaToTo:Turbulence} will be more convenient for our purposes.  
Its elements are countable $(\bbQ+i\bbQ)$-algebras equipped with a norm and involution. 
As noted in \cite[p. 8]{FaToTo:Turbulence}, axioms for being a normed *-algebra and the \cstar-equality correspond to Borel conditions. By induction on quantifier rank one proves an analogous statement for every condition in continuous logic
(\cite{BYBHU}, as adapted to \cstar-algebras and other not necessarily bounded metric structures in \cite{Muenster}, see also \cite{hart2023an}). 
Therefore, for every sentence $\varphi$ in the language of \cstar-algebras, the set of  codes in~$\hat \Xi$ that correspond to separable algebras $A$ such that $\varphi^A=0$ is Borel.

This implies the following.

\newpar{}\begin{Lemma}\label{L.Axiomatizable.0}
	If $\cP$ is a property of separable \cstar-algebras, and it is provable \emph{in ZF} that $\cP$ is axiomatizable,  then the set of codes in $\hat \Xi$ that correspond to algebras that satisfy  $\cP$ is Borel. \qed 
\end{Lemma}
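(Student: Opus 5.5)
The plan is to combine the definition of axiomatizability with the Borel computation recorded in \ref{P.CodingC*}. Working in \ZF, suppose $\cP$ is axiomatizable. Then there is a set $T$ of sentences in the language of \cstar-algebras such that a (separable) \cstar-algebra $A$ has $\cP$ if and only if $\varphi^A=0$ for all $\varphi\in T$. The language is countable, and so is the set of all sentences. The set of sentences can be recursively enumerated, say as $(\varphi_k)_{k\in\bbN}$, with no Choice needed. Hence $T$ corresponds to a subset $S\subseteq\bbN$, namely $S=\{k\mid \varphi_k\in T\}$, and no choice is involved in passing from $T$ to $S$.

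Next, for each $k$ let $\sfB_k$ be the set of codes $\gamma\in\hat\Xi$ whose associated algebra $A_\gamma$ satisfies $\varphi_k^{A_\gamma}=0$. By \ref{P.CodingC*}, each $\sfB_k$ is Borel. The set of codes of algebras with $\cP$ is then $\bigcap_{k\in S}\sfB_k$, which is a countable intersection of Borel sets and therefore Borel.

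The one point to be careful about is that, without Choice, ``a countable union or intersection of Borel sets is Borel'' can fail. It fails when the Borel codes have to be chosen arbitrarily: without countable choice the Borel $\sigma$-algebra may even behave pathologically. So I will make the argument uniform. The induction on quantifier rank in \ref{P.CodingC*} is effective: from $\varphi_k$ it produces an explicit Borel code for $\sfB_k$. For example, the clause for $\sup_x$ becomes a countable union over the dense $(\bbQ+i\bbQ)$-part of the code, and the connectives become continuous operations on Borel functions $\gamma\mapsto\varphi^{A_\gamma}$. The assignment $k\mapsto$ code of $\sfB_k$ is therefore a definable function, with no choices made. Intersecting along $S$ gives an honest Borel code for the set in question.

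I expect this uniformity to be the main obstacle. What needs checking is that the map $\gamma\mapsto\varphi^{A_\gamma}$ is Borel uniformly in $\varphi$, with the Borel code defined by recursion on $\varphi$. This is the step where I would spell out the recursion explicitly.
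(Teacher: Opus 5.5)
Your approach is the paper's. Paragraph \ref{P.CodingC*} shows by induction on quantifier rank that each condition $\varphi=0$ cuts out a Borel set of codes in $\hat\Xi$, and the axiomatized class is the intersection over the axioms. Your insistence that the Borel codes be produced uniformly in $\varphi$ is a refinement the paper leaves implicit, and it is what is needed if the resulting set is to serve as a real parameter in Theorem~\ref{T.Shoenfield}.

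However, one step is false as written, and the paper's one-line argument tacitly relies on it too. In the continuous logic being used (\cite{BYBHU}, \cite{Muenster}) connectives are arbitrary continuous functions. So even over a countable signature there are $2^{\aleph_0}$ sentences, and they cannot be listed as $(\varphi_k)_{k\in\bbN}$. An axiomatizing theory $T$ may therefore be uncountable, and then $\bigcap_{\varphi\in T}\sfB_\varphi$ is an uncountable intersection that your argument does not show to be Borel.

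The repair is short and uses no Choice.
\begin{enumerate}
\item Fix a countable, uniformly dense fragment $\mathcal F_0$ of sentences: connectives from a countable family such as rational polynomials together with $|\cdot|$, and scalars from $\bbQ+i\bbQ$. Density means that for every sentence $\varphi$ and every $n$ there is some $\psi\in\mathcal F_0$ with $|\varphi^B-\psi^B|\le 1/n$ in every \cstar-algebra $B$. This is proved by induction on $\varphi$, using the Weierstrass theorem and the range bounds attached to the sorts.
\item Let $T'$ be the set of $(\psi,r)\in\mathcal F_0\times\bbQ$ such that $|\psi^B|\le r$ for every separable $B$ with $\cP$. It is cut out by Separation from a countable set.
\item Suppose a separable $A$ satisfies all of $T'$. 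Fix $\varphi\in T$ and $n$, and take any $\psi$ as above; this is one instance at a time, so no choice function is involved. Every separable $B$ with $\cP$ has $\varphi^B=0$, so $|\psi^B|\le 1/n$, i.e.\ $(\psi,1/n)\in T'$. Hence $|\psi^A|\le 1/n$ and $|\varphi^A|\le 2/n$.
\item Since $n$ was arbitrary, $A$ satisfies $T$ and has $\cP$. So $T'$ is a countable axiomatization, and your intersection can be taken over $T'$, with codes computed uniformly from a real coding $T'$.
\end{enumerate}

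A smaller correction to your framing. In \ZF{} the Borel sets, defined as the $\sigma$-algebra generated by the open sets, are closed under countable unions and intersections by definition. What can fail without countable choice is the existence of Borel codes; in the Feferman--L\'evy model every set of reals is Borel. So the literal statement of the lemma already follows once each $\sfB_k$ is Borel and the index set is countable. Your uniform recursion gives the stronger, coded conclusion, which is the form the absoluteness applications actually use.
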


The assumption that the axiomatizability is provable in \ZF{} (and not merely in ZFC) is not vacuous; see Proposition~\ref{P.not-axiomatizable} below. 

\newpar{}
Recall that a \cstar-algebra $A$ is nuclear if and only if for every \cstar-algebra~$B$ the algebraic tensor product $A\odot B$ has a unique \cstar-norm and that it satisfies CPAP (completely positive approximation property) if the identity map on $A$ is nuclear, meaning that for every $\varepsilon>0$ and every finite $F\subseteq A$ there is $m$ and completely positive maps $\varphi:A\to M_m(\bbC)$ and $\psi: M_m(\bbC )\to A$ such that $\max_{a\in F} \|\psi\circ \varphi(a)-a\|<\varepsilon$. 
Nuclearity and CPAP are equivalent in \ZFC{} (see e.g., \cite[Lemma~IV.3.1.6]{BlackadarOperator}), but we don't know whether they are equivalent in ZF.

\newpar{}\begin{Lemma}\label{L.Axiomatizable} 
			\begin{enumerate}
				\item For each of the following properties of \cstar-algebras, the set of codes of separable \cstar-algebras that satisfy it is Borel: 
				Being finite, stably finite, purely infinite and simple. 
				\item \label{2.Axiom} The set of codes for separable \cstar-algebras that have a tracial state is $\bSigma^1_1$. 
				
				\item \label{3.Axiom} The set of codes for separable 
				nuclear \cstar-algebras is $\bPi^1_1$. 
				
				\item \label{4.Axiom} The set of codes for separable \cstar-algebras with CPAP is Borel. 
			\end{enumerate}

\end{Lemma}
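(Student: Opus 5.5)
Throughout I work in the space $\hat\Xi$ of \ref{P.CodingC*}. A code $\gamma$ is a countable $(\bbQ+i\bbQ)$-*-algebra $A_\gamma^0$ with norm and involution, and $A_\gamma$ is its completion. Every quantification over finite tuples from $A^0_\gamma$, over rationals, or over $m\in\bbN$ is countable, so any condition built only from such quantifiers and Borel (norm) conditions is Borel in $\gamma$. The strategy is to express each property through countable quantifiers where possible, and through a single real quantifier otherwise.

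For (1), I would write each property as a countable Boolean combination of norm conditions on elements of $A^0_\gamma$ and of the matrix algebras $M_m(A^0_\gamma)$; the latter are again coded by countable data Borel in $\gamma$. \emph{Finite} (unital case, or via unitization) says: for every rational $\varepsilon$ there is a rational $\delta$ such that for all $x\in A^0_\gamma$, $\|x^*x-1\|<\delta$ implies $\|xx^*-1\|<\varepsilon$. This approximate version is equivalent to the exact one by the standard perturbation argument (polar decomposition via functional calculus, \ref{SAFunctCalc}, no Choice needed). \emph{Stably finite} is the conjunction over $m$ of finiteness of $M_m(A_\gamma)$. For \emph{purely infinite and simple}, I would use the characterization that for all nonzero positive $a,b$ there is $x$ with $x^*ax=b$. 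Its approximate form, $\forall a,b\in A^0_\gamma$, $\forall\varepsilon$, $\exists x\in A^0_\gamma$ with $\|x^*ax-b\|<\varepsilon$ (with $a$ restricted so that $\|a\|$ is bounded below), is Borel. The equivalence of exact and approximate forms should follow from the usual Cuntz-comparison arguments ($(b-\varepsilon)_+\precsim a$), which use only functional calculus.

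For (2), a tracial state on $A_\gamma$ is determined by its restriction to $A^0_\gamma$, that is, by an element $\tau\in\bbD^{A^0_\gamma}$, a Polish space. The conditions linear, positive ($\tau(x^*x)\ge0$), $\tau(xy)=\tau(yx)$, and norm one are countably many closed conditions jointly in $(\gamma,\tau)$. Norm one I would state as $|\tau(x)|\le\|x\|$ together with $\sup_x\tau(x^*x)/\|x\|^2=1$. The set of codes with a tracial state is the projection of this Borel set, hence $\bSigma^1_1$. No Choice is used, since a functional bounded on $A^0_\gamma$ extends by continuity.

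For (4), CPAP says: for every $\varepsilon$ and every finite $F\subseteq A^0_\gamma$ (it suffices to consider these by density) there exist $m$ and c.p.\ maps $\varphi,\psi$. I would replace the c.p.\ maps by countable data. A c.p.\ map $\psi\colon M_m\to A$ is given by the images of the matrix units, and c.p.\ is equivalent to positivity of the Choi matrix $[\psi(e_{ij})]\in M_m(A)$. A c.p.\ map $\varphi\colon A\to M_m$ restricted to a finite set can be approximated, using Arveson extension for $M_m$-valued maps, which is plausibly provable in ZF for separable $A$ by the same absoluteness trick. Alternatively, by Stinespring, $\varphi$ can be replaced by compressions of finitely many states. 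The \emph{main obstacle} is exactly this: showing that one may restrict to $\varphi,\psi$ with parameters in $A^0_\gamma$ and $\bbQ+i\bbQ$, with c.p.\ replaced by a Borel approximate condition, without losing equivalence. If this fails, I would instead observe that CPAP is an $\exists$-over-reals statement for each $(F,\varepsilon)$, hence $\bSigma^1_1$. By the Choi-matrix argument it is also $\bPi^1_1$ (every approximately c.p.\ map is close to a c.p.\ one). Souslin's theorem then gives Borel, and it holds in ZF since it is a statement about the coded objects, via Theorem~\ref{T.Provability}.

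For (3), nuclearity says that for every code $\delta\in\hat\Xi$ of a separable $B$, any two \cstar-norms on $A^0_\gamma\odot B^0_\delta$ coincide. It suffices to compare an arbitrary \cstar-norm with the minimal norm, and a \cstar-seminorm on the countable algebraic tensor product is a real. So the statement reads: for all $\delta$ and all functions $\nu$ on $A^0_\gamma\odot B^0_\delta$ satisfying the Borel \cstar-seminorm-extending axioms, $\nu=\|\cdot\|_{\min}$. The minimal norm is Borel computable, as a supremum over tensor products of states, with states given by reals and suprema over countable dense sets of states. The resulting form is $\forall$-real of a Borel condition, so it is $\bPi^1_1$. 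Restricting $B$ to separable algebras is justified because every finitely generated subalgebra of a tensor product lives in a separable one.
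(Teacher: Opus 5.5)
The genuine gap is (4), exactly where you flag it; (3) relies on the same unproved claim, and (1) needs a repair for pure infiniteness. Your (2) is the paper's argument made uniform in the code. Your Borel descriptions of finiteness and stable finiteness in (1) are fine; the paper instead cites first-order axiomatizability from \cite{Muenster} and uses that zero sets of sentences are Borel in $\hat\Xi$.

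\textbf{Item (4).} The Choi-matrix remark only handles $\psi\colon M_m\to A_\gamma$: it can be replaced, at small cost, by a map whose Choi matrix is $y^*y$ with $y\in M_m(A^0_\gamma)$, which is countable data. The difficulty is $\varphi\colon A_\gamma\to M_m$, and the Souslin fallback as stated does not get around it.
\begin{itemize}
\item You never produce a $\bPi^1_1$ definition. CPAP has the form $\forall(F,\varepsilon)\,\exists(m,\varphi,\psi)$. Turning that real existential into a universal would need a duality argument, and none is apparent, since $\psi\circ\varphi\approx\mathrm{id}$ on $F$ is not convex in $(\varphi,\psi)$.
\item If instead you could restrict $\varphi$ to a countable family, you would get Borel directly and Souslin's theorem would be superfluous.
\item `This $\bSigma^1_1$ set is Borel' asserts the existence of a Borel code, which is a $\bSigma^1_3$ statement. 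So Theorem~\ref{T.Provability} is not the tool for transferring it to \ZF.
\end{itemize}

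What is missing is a countable, point-norm dense family of c.p.\ maps $A_\gamma\to M_m$ that depends on $\gamma$ in a Borel way. One way to get it:
\begin{itemize}
\item Fix a faithful representation $\pi_\gamma$ that is Borel in $\gamma$. For instance, take the direct sum of the GNS representations of the states produced by the explicit Hahn--Banach procedure of Lemma~\ref{L.states}, one for each element of $A^0_\gamma$.
\item Use the maps $a\mapsto\sum_l[\langle\pi_\gamma(a)\zeta^l_j,\zeta^l_i\rangle]_{i,j}$, with the $\zeta^l_i$ taken from a countable dense set of vectors.
\item These are point-norm dense among c.p.\ maps into $M_m$. Such maps correspond to positive functionals on $M_m(A_\gamma)$, and the vector functionals of a faithful representation have weak$^*$-dense convex hull, by a standard Hahn--Banach separation argument.
\end{itemize}
The paper does not reprove this. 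It cites Effros's theorem \cite[Theorem~1.2]{Kec:C*}, noting that the proof really uses CPAP, or alternatively \cite[Theorem~5.7.3]{Muenster}. A Souslin route can be made to work, but through the \ZFC{} theorem that CPAP coincides with nuclearity, not through Choi matrices. Together with (3), that theorem gives a $\bPi^1_2$ statement that a $\bSigma^1_1$ set and a $\bPi^1_1$ set of codes coincide, and this then holds in \ZF{} by Theorem~\ref{T.Provability}.

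\textbf{Item (3).} Your phrase `suprema over countable dense sets of states' presupposes a countable dense family of states chosen Borel in $(\gamma,\delta)$, which you have not provided. If the supremum runs over all states, then $\nu(x)\le\|x\|_{\min}$ hides an existential real quantifier under your universal ones, and you only get $\bPi^1_2$. The paper avoids this by working with the complement and quantifying existentially over faithful states $\psi,\eta$; any such pair computes $\|\cdot\|_{\min}$ exactly. In your universal form the same trick works: demand $\nu(x)\le\|(\pi_\psi\otimes\pi_\eta)(x)\|$ for \emph{all} states $\psi,\eta$ whose GNS representations are faithful. That is a Borel condition, checkable on $A^0_\gamma$ and $B^0_\delta$, and it is not vacuous by Proposition~\ref{FaithfulStateThm}. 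Also, for seminorms the correct requirement is $\nu\le\|\cdot\|_{\min}$, not equality.

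\textbf{Item (1).} Your characterization of purely infinite simple algebras needs the clause $A\not\cong\bbC$ and a norm bound on the witness $x$. Without the bound, the condition is not stable under perturbing $a$, so restricting the quantifiers to $A^0_\gamma$ is not justified.
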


\begin{Proof} For finite, stably finite, and purely infinite and simple one only needs to observe that the defining property of the class is already first order; see the proof  of  \cite[Theorem~2.5.1]{Muenster}. 

\eqref{2.Axiom}
Let $A$ be a code for a separable \cstar-algebra. 
Since the space $\sfS(A)$ of states of $A$ is a compact metrizable space (Lemma~\ref{L.S(A)}) and the evaluation function is continuous, the space of all tracial states of $A$ is closed and the assertion that it is nonempty is $\bSigma^1_1$. 

\eqref{4.Axiom}
The class \cstar-algebras with CPAP is Borel by a result of Effros (\cite[Theorem~1.2]{Kec:C*}).  This result is stated using the equivalence of nuclearity and CPAP, but from the proof it is clear that CPAP is used. An alternative proof of this, and Borelness of related approximation properties (such as for example small nuclear dimension) can also be extracted from \cite[Theorem~5.7.3]{Muenster}.  
		
		\eqref{3.Axiom}		
For nuclearity, 	fix a (code for) a separable \cstar-algebra $A$. 
Then $A$ is not nuclear if and only if there is a \cstar-algebra $B$ with $\|\cdot\|_{\max}\neq\|\cdot\|_{\min}$ on $A\odot B$.  For such a $B$, there is a $z=\sum_{k=1}^n x_k\otimes y_k\in A\odot B$ with $\|z\|_{\max}>\|z\|_{\min}$.  Then $\|\cdot\|_{\max}\neq\|\cdot\|_{\min}$ on $A\odot C^*(y_1,\dots,y_n)$.
Therefore, we may assume $B$ is separable.  Since both $A$ and $B$ are separable, $\|\cdot\|_{\min}$ is the norm in a GNS-representation by states of the form $\psi\otimes \eta$, where $\psi$ and $\eta$ are faithful states of $A$ and $B$.  

\newpar{}\begin{Claim} 
The set of all triples $(B,x,\varphi,\psi,\eta)$ where $B$ is a code for a separable \cstar-algebra, $x$ is a sum of $n$ elementary tensors in $A\odot B$ for some $n\in \bbN$, $\varphi$ is a state on $A\odot B$, $\psi$ and $\eta$ are faithful states on $A$ and $B$ and  
\[
\|\pi_\varphi(x)\|>\|x\|_{\psi\otimes \eta} 
\]
is Borel. 
\end{Claim}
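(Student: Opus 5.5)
We need to show the set of tuples $(B,x,\varphi,\psi,\eta)$ with $\|\pi_\varphi(x)\|>\|x\|_{\psi\otimes\eta}$ is Borel. The plan is to fix the ambient standard Borel space first. Let $B$ range over $\hat\Xi$. Let $x$ range over $\bigsqcup_n (A_0\times B_0)^n$, with $A_0, B_0$ the countable dense $(\bbQ+i\bbQ)$-subalgebras given by the codes; this is harmless because, by continuity of both norms on $A\odot B$ in each elementary tensor, the strict inequality for some $x$ implies it for a nearby $x$ with entries in $A_0$, $B_0$. The functionals $\varphi,\psi,\eta$ are coded by their values on the countable dense sets, as elements of $\bbC^{\bbN}$. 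In these coordinates we show that each of the conditions ``$\varphi$ is a state on $A\odot B$'', ``$\psi$ is a faithful state on $A$'' and ``$\eta$ is a faithful state on $B$'', and the strict inequality, is Borel.

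For the state conditions, positivity and normalization are countable conjunctions of closed conditions: $\varphi(z^*z)\ge 0$ for $z$ in the countable $*$-algebra generated by $A_0\odot B_0$, boundedness by $\|\cdot\|_{\max}$, and $\varphi(1)=1$ (after unitizing, if necessary). Faithfulness of $\psi$ is the condition $(\forall a\in A_0)(\forall k)(\exists m)$: $\|a\|>1/k$ implies $\psi(a^*a)>1/m$. This is Borel because it uses only countable quantifiers, and for states it is equivalent to faithfulness on all of $A$, via a continuity estimate of the form $\psi(a^*a)\ge \psi(b^*b)-2\|a\|\,\|a-b\|$ with $b$ close to $a$. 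The same argument handles $\eta$.

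For the norms, the GNS norm can be written as
\[
\|\pi_\varphi(x)\|^2=\sup\{\varphi(z^*x^*xz)/\varphi(z^*z): z\in A_0\odot B_0,\ \varphi(z^*z)>0\},
\]
which is a countable supremum of Borel functions of $(B,x,\varphi)$, hence Borel. Likewise $\|x\|_{\psi\otimes\eta}$ is the same supremum with $\psi\otimes\eta$ in place of $\varphi$, where $(\psi\otimes\eta)(a\otimes b)=\psi(a)\eta(b)$ is Borel in $(\psi,\eta)$. The strict inequality is then a Borel comparison of two Borel functions, so the whole set is Borel.

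The main obstacle is making the coding precise. Concretely, one must (a) view $A\odot B$ with its dense countable subset as a Borel function of the code $B$, and (b) verify that the norm and multiplication on $A_0\odot B_0$ needed above are computed by Borel maps. Of the two, (b) is the delicate point: the algebraic structure of $A_0\odot B_0$ is fine, but its norms are not given by the codes and must be defined by the formulas above rather than assumed. Once this is in place, the Claim yields that non-nuclearity is $\bSigma^1_1$ (by projecting away $B,x,\varphi,\psi,\eta$), so nuclearity is $\bPi^1_1$.
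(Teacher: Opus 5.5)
Your overall route is what the paper's two-line proof has in mind: code everything by values on the countable dense sets, and write both GNS norms as countable suprema of Borel functions. The norm part is fine. The gap is the faithfulness step.

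In your condition $(\forall a\in A_0)(\forall k)(\exists m)\,[\|a\|>1/k\Rightarrow\psi(a^*a)>1/m]$ the witness $m$ depends on $a$. So the condition only says that $\psi(a^*a)>0$ for every nonzero $a\in A_0$, and this does not imply faithfulness on $A$. Take $A=C[0,1]$ with dense subalgebra $A_0=\mathbb{Q}(i)[t]$, and let $\psi$ be evaluation at a transcendental point $t_0\in[0,1]$. No nonzero element of $A_0$ vanishes at $t_0$, so your condition holds, yet $\psi$ is not faithful.

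The continuity estimate cannot repair this. From $\psi(c^*c)=0$ with $\|c\|=1$ it only yields that $\psi(b^*b)$ is small for $b\in A_0$ near $c$, not that it is zero. Making $m$ depend only on $k$ does not help either: then the Lebesgue integral on $C[0,1]$, which is faithful, violates the condition (take narrow bumps).

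As it stands, faithfulness is a universal quantification over all of $A$ (resp.\ $B$), so it is only visibly $\bPi^1_1$. Your set is then only visibly the intersection of a Borel set with a $\bPi^1_1$ set. Its projection is then only visibly $\bSigma^1_2$ rather than $\bSigma^1_1$, which is exactly what the subsequent nuclearity argument cannot afford.

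To close the gap you have two options. One is a genuinely different argument that the faithful states form a Borel set. The other is simpler and enough for how the paper uses the Claim: replace faithfulness of $\psi$ and $\eta$ by faithfulness of $\pi_\psi$ and $\pi_\eta$. That is all that is used to identify $\|x\|_{\psi\otimes\eta}$ with $\|x\|_{\min}$.

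This condition is Borel by your own method. $\pi_\psi$ is faithful iff $\|\pi_\psi(a)\|=\|a\|$ for all $a\in A_0$, because $a\mapsto\|\pi_\psi(a)\|$ is $1$-Lipschitz, so isometry on $A_0$ passes to $A$. Strict positivity of $\psi(a^*a)$, by contrast, is not preserved under limits. Faithful states exist (Proposition~\ref{FaithfulStateThm}) and have faithful GNS representations, so the resulting projection, the set of codes of non-nuclear algebras, is unchanged.

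A smaller point: you impose boundedness by $\|\cdot\|_{\max}$, but $\|\cdot\|_{\max}$ on $A_0\odot B_0$ is not read off from the codes, and you do not show it is Borel. Use $|\varphi(a\otimes b)|\le\|a\|\,\|b\|$ for $a\in A_0$, $b\in B_0$ instead. Together with positivity on $A_0\odot B_0$, this gives a positive functional on $A\odot B$. Some such bound is necessary: $p\mapsto p(2)$ on $\mathbb{Q}(i)[t]\subseteq C[0,1]$ is positive on squares but unbounded.
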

\begin{Proof}
	First, states, faithful states, and finite sums of elementary tensors all have standard Borel codings. Second, each of the two GNS norms is a Borel function of the codes for states. 
\end{Proof}

Finally, since $\pi_\psi$ and $\pi_\eta$ are faithful, the right-hand side in the displayed formula of the Claim is equal to $\|x\|_{\min}$. 
	Therefore, the set of codes for non-nuclear \cstar-algebras is, as the projection of this set,~$\bSigma^1_1$. 
	This implies that the set of codes for nuclear \cstar-algebras is $\bPi^1_1$ as claimed. 
\end{Proof}

\newpar{}
Theorem~\ref{T.absoluteness.C*.separable} below implies that the trace problem and the UCT problem  are absolute between transitive models of ZFC. It however does not imply that one or both of these statements cannot be independent from ZF, or even ZFC. See the discussion in \ref{Discussion.Absoluteness}. 
As pointed out in the paragraph preceding Lemma~\ref{L.Axiomatizable}, we do not know whether in \ZF{} one can prove that CPAP is equivalent to nuclearity.  

\newpar{}\begin{Theorem}\label{T.absoluteness.C*.separable} Each one of the following statements is absolute between transitive models of \ZF{} that contain all countable ordinals. 
\begin{enumerate}
	\item \label{1.Abs} Every separable, stably finite, \cstar-algebra has a tracial state. 
	\item \label{nuclear-UCT} Every separable, nuclear \cstar-algebra satisfies the Universal Coefficient Theorem (UCT). 
	\item \label{CPAP-UCT} Every separable \cstar-algebra with CPAP satisfies the Universal Coefficient Theorem (UCT). 
\end{enumerate}
\end{Theorem}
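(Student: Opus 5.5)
The plan is to show that each of the three statements is (equivalent in ZF to) a $\bPi^1_2$ statement about the standard Borel space $\hat\Xi$ of codes of separable \cstar-algebras. Absoluteness between transitive models of \ZF{} containing all countable ordinals then follows from Shoenfield's Absoluteness Theorem (Theorem~\ref{T.Shoenfield}), applied to the negations, which are $\bSigma^1_2$. Each statement has the form $(\forall A\in\hat\Xi)(A\in\sfX_1\Rightarrow A\in\sfX_2)$. It is therefore enough to check that the hypothesis class $\sfX_1$ is $\bSigma^1_2$ (in fact it will be at most $\bPi^1_1$) and the conclusion class $\sfX_2$ is $\bPi^1_2$. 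Then the matrix is $\bPi^1_2$, and the universal quantifier over $\hat\Xi$ is absorbed into the leading function quantifier via Lemma~\ref{L.Sigma1n-formula}. One point deserves care: codes in $\hat\Xi$ are interpreted correctly in any transitive model. The set $\hat\Xi$ is Borel with a recursive definition, hence absolute by Theorem~\ref{T.Analytic}. Moreover, by Lemma~\ref{L.states}, Lemma~\ref{L.S(A)} and the ZF development earlier in the paper, the relevant notions (states, tracial states) mean the same thing in every model.

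For \eqref{1.Abs}: by Lemma~\ref{L.Axiomatizable}, stable finiteness is a Borel condition on codes and having a tracial state is $\bSigma^1_1$. So the statement is $(\forall A)(A\notin\text{Borel}\ \lor\ A\in\bSigma^1_1)$, which is $\bPi^1_2$; indeed it is the emptiness of a $\bSigma^1_1$-intersect-$\bPi^1_1$ set. For \eqref{nuclear-UCT} and \eqref{CPAP-UCT}: by Lemma~\ref{L.Axiomatizable} \eqref{3.Axiom} and \eqref{4.Axiom}, nuclearity is $\bPi^1_1$ and CPAP is Borel. Both are well within $\bSigma^1_2$, so the only real work is bounding the complexity of ``$A$ satisfies the UCT''.

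The main obstacle is the UCT, since it is not an obviously projective condition. My first approach is to use the characterization, valid for separable algebras, that $A$ satisfies the UCT iff $A$ is $KK$-equivalent to a separable commutative \cstar-algebra. This is expressed as: there exist a code $C$ for a separable commutative \cstar-algebra and (codes for) Kasparov cycles $x\in KK(A,C)$, $y\in KK(C,A)$ whose Kasparov products are homotopic to the identities. I would code a Kasparov $(A,C)$-module by a countably generated Hilbert module over the code of $C$, together with a representation of $A$ and an operator $F$, all presented by countable dense data. The Kasparov conditions, such as $a(F^2-1)$ being compact, are countable conjunctions of Borel conditions. Homotopies are likewise coded by cycles over $C[0,1]$. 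This makes ``$A$ satisfies UCT'' a $\bSigma^1_1$ condition (existential over reals with a Borel matrix), or at worst $\bSigma^1_2$ if the Kasparov product must be witnessed by an existentially quantified connection whose defining conditions are $\bPi^1_1$.

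Either bound suffices, because the conclusion sits on the positive side of an implication under a single universal quantifier, which gives $\bPi^1_2$ overall. If coding the Kasparov product proves too messy, I would fall back on the Rosenberg--Schochet formulation. In that version the UCT is the assertion that the natural map $KK(A,B)\to\operatorname{Hom}(K_*(A),K_*(B))$ has kernel $\operatorname{Ext}$, for all separable $B$. This is a $\forall B\,\exists\ldots$ statement, which may push the complexity to $\bPi^1_3$, so the $KK$-equivalence route, which avoids quantifying over all $B$, is preferable.
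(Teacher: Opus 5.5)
Your overall strategy is the paper's: write each statement as the emptiness of a projective set of codes in $\hat\Xi$, check that it is $\bPi^1_2$, and apply Theorem~\ref{T.Shoenfield}. Part (1) is handled exactly as in the paper. For (2) and (3) the paper does not analyse the UCT itself. It cites \cite[Theorem~3.27]{pi2026game} for the fact that the set of codes of algebras satisfying the UCT is $\bSigma^1_1$.

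The gap is in your treatment of the UCT. You claim that ``either bound suffices'', i.e.\ that a $\bSigma^1_2$ bound on the UCT class would still give a $\bPi^1_2$ statement. That is false, and it contradicts your own first paragraph, where you correctly require the conclusion class $\sfX_2$ to be $\bPi^1_2$. A $\bSigma^1_2$ set need not be $\bPi^1_2$. If $\sfX_2$ is only known to be $\bSigma^1_2$, then $(\forall A)(A\notin\sfX_1\lor A\in\sfX_2)$ has the shape $\forall\exists\forall$ and is $\bPi^1_3$. That is exactly the level at which Shoenfield's theorem breaks down (cf.\ the $\Pi^1_3$ example in \ref{Ex.Delta12WO}).

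So your argument for (2) and (3) stands or falls with actually obtaining the $\bSigma^1_1$ (or at least a $\bPi^1_2$) bound. The point you leave open, whether witnessing the Kasparov product introduces a universal function quantifier, is precisely the crux.

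It can be closed. Stabilize Kasparov modules to $\hat{\mathcal H}_C$, and code operators in $\mathcal L(\hat{\mathcal H}_C)$ by their action on a countable dense subset of $C\otimes\mathcal K$. Then every condition imposed on the witnesses is a countable conjunction of Borel conditions:
\begin{itemize}
\item compactness of $[F,\phi(a)]$ and $(F^2-1)\phi(a)$;
\item the connection condition, checked on a countable dense set of vectors;
\item positivity of $\phi(a)[F_x\otimes 1,F_z]\phi(a)^*$ modulo compacts;
\item homotopies over $C[0,1]$.
\end{itemize}
The reason is that the quotient norm modulo $\mathcal K(E)$ equals $\lim_n\|T(1-e_n)\|$ along an explicitly defined countable approximate unit $(e_n)$. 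Hence ``$A$ is $KK$-equivalent to a separable commutative \cstar-algebra'' is a block of existential real quantifiers in front of a Borel matrix, and so is $\bSigma^1_1$. With that verified, or with the citation the paper uses, your proof is complete. As written, with the $\bSigma^1_2$ fallback declared sufficient, it is not.
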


\begin{Proof} \eqref{1.Abs}
By Lemma~\ref{L.Axiomatizable}, the set of codes in $\hat\Xi$  corresponding to separable \cstar-algebras that are stably finite and have no tracial state is $\bPi^1_1$. Thus the assertion that this set is empty is $\bPi^1_2$, hence absolute between transitive models of \ZF{} that contain all countable ordinals by Theorem~\ref{T.Shoenfield}. 

\eqref{nuclear-UCT} and \eqref{CPAP-UCT}
The set of codes for separable \cstar-algebras with UCT is $\bSigma^1_1$ by \cite[Theorem~3.27]{pi2026game}. By Lemma~\ref{L.Axiomatizable}, \eqref{CPAP-UCT}  asks whether a concrete Borel set is included in a concrete $\bSigma^1_1$ set and \eqref{nuclear-UCT} asks whether a concrete~$\bPi^1_1$ set is included in a concrete $\bSigma^1_1$ set. Thus each of these two statements is equivalent to asserting that every code for a separable \cstar-algebra belongs to a certain~$\bSigma^1_1$ set. Such statement is equivalent to a $\bPi^1_2$ statement, and therefore absolute for transitive models of \ZF{} that contain all countable ordinals by Shoenfield's Absoluteness Theorem, Theorem~\ref{T.Shoenfield}.  

\end{Proof}

\subsection{Reflection and $\sigma$-completeness of $\Sep(A)$}

\newpar{} In this section, $\sigma$-completeness is a standard set-theoretic notion, different from $\sigma$-completeness of metric spaces discussed elsewhere in this paper.

\newpar{} 
In the presence of the Axiom of Choice, many properties of nonseparable \cstar-algebras reflect to separable \cstar-subalgebras (see \cite[Property (SI)]{BlackadarOperator}, and also \cite[\S 7.3]{farah2019combinatorial}). These facts hinge on the directed system $\Sep(A)$ of separable \cstar-subalgebras of $A$  being \emph{$\sigma$-complete}, meaning that every countable increasing sequence of its elements has a supremum (\cite[Definition~6.2.3]{farah2019combinatorial}; see the discussion of the Downward L\"owenheim--Skolem Theorem, Theorem~7.1.9  of \cite{farah2019combinatorial} and  the entire Chapter 7 of this reference). 

\newpar{}One cannot prove that $\Sep(A)$ is $\sigma$-complete for every \cstar-algebra. For example, for the \cstar-algebra $A$ constructed from a Russell set in Proposition~\ref{P.Russell.example} (and many examples in \cite{blackadar2023hilbert}), $\Sep(A)$ is not $\sigma$-complete. However, the axiom AC$_\omega$ of countable choices implies that $\Sep(A)$ is $\sigma$-complete. 

Compare the following with Lemma~\ref{L.Axiomatizable}. \DC{} stands for the axiom of Dependent Choices.  

\newpar{}\begin{Proposition}\label{P.not-axiomatizable}
	While it is provable in \ZFC{} that having a tracial state is axiomatizable, this is not provable in  \ZF+\DC. 
\end{Proposition}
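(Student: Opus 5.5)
The plan has two halves: a \ZFC{} proof of axiomatizability, and a model of \ZF+\DC{} in which the class of \cstar-algebras with a tracial state is not the class of models of any theory. For the \ZFC{} half I would use the standard criterion of continuous model theory: a class is axiomatizable if and only if it is closed under isomorphisms, ultraproducts and ultraroots (\cite{BYBHU}, \cite{Muenster}). Closure under isomorphism is trivial. For ultraproducts, given tracial states $\tau_i$ on $A_i$ and an ultrafilter $\cU$, the limit $\lim_\cU\tau_i$ is a tracial state on $\prod_\cU A_i$. For ultraroots, if $A^\cU$ has a tracial state then its restriction along the diagonal embedding of $A$ is a tracial state on $A$. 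This is the only place Choice enters, through the existence of the ultrafilter and the criterion itself.

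For the failure in \ZF+\DC{} I would work in Shelah's model from \ref{P.ExistenceOfStates.1}: \ZF+\DC+``every set of reals has the Property of Baire''. By \ref{P.ExistenceOfStates} the algebra $\ell^\infty(\bbN)/c_0(\bbN)$ has no states in this model, hence no tracial state. Now suppose, towards a contradiction, that in this model some theory $T$ axiomatizes the class of algebras with a tracial state. The separable algebra $C(2^\bbN)$ is commutative and has states by Lemma~\ref{L.states}. Every state on a commutative algebra is tracial, so $C(2^\bbN)\models T$. It then suffices to show, in \ZF, that $\ell^\infty/c_0$ and $C(2^\bbN)$ are elementarily equivalent: they would then have the same theory, so $\ell^\infty/c_0$ would satisfy $T$ and have a tracial state, a contradiction.

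The elementary equivalence is the step I expect to be the main obstacle, because the usual routes (ultrapowers, Keisler--Shelah, or back-and-forth over saturated models) use Choice. I would instead reduce to Boolean algebras by a purely syntactic argument. Both algebras are unital, abelian and of real rank zero. More concretely, the projections in each span a dense subalgebra, since every element of $\ell^\infty/c_0$ is a norm limit of finite combinations of projections, with these chosen effectively from rational step functions. Their projection lattices are $\cP(\bbN)/\mathrm{fin}$ and the clopen algebra of $2^\bbN$, and both are atomless Boolean algebras.

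I would then translate each sentence $\varphi$ of continuous logic, up to any tolerance $\varepsilon$, into a first-order sentence of Boolean algebras. The idea is that quantifiers over the unit ball are replaced by quantifiers over $k$-tuples of projections partitioning unity together with rational coefficients, with the translation uniform across all $C(X)$ with $X$ zero-dimensional. This is an induction on the complexity of $\varphi$, and it is the one place a genuine new computation is needed.

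Finally, the theory of atomless Boolean algebras is complete by quantifier elimination, which is an arithmetical and finitary fact and so holds in \ZF{} (or via Theorem~\ref{T.Provability}). Applying this to the translated sentences gives $\varphi^{\ell^\infty/c_0}=\varphi^{C(2^\bbN)}$ for every sentence $\varphi$, which completes the contradiction.
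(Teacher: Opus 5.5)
Your proposal is correct in outline, but for the \ZF+\DC{} half it takes a genuinely different route from the paper. The \ZFC{} half, via the ultraproduct/ultraroot characterization, is the same.

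The paper works in the same Shelah model but uses \DC{} essentially. \DC{} gives the downward L\"owenheim--Skolem theorem (it is in fact equivalent to it), so $\ell^\infty/c_0$ is elementarily equivalent to one of its separable \cstar-subalgebras $B$. Then $B\cong C(X)$ by the choiceless Gelfand theorem for separable commutative algebras, so $B$ has a trace.

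You instead prove outright in \ZF{} that $\ell^\infty/c_0\equiv C(2^\bbN)$. You translate continuous sentences into sentences about the atomless projection Boolean algebras, then use completeness of the theory of atomless Boolean algebras. That completeness transfers to arbitrary models by soundness alone, so no Choice is hidden there.

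The paper's route is short, needs no new computation, and applies to any trace-less algebra whose separable elementary subalgebras have traces. Yours pins down the theory explicitly and does not use \DC{} at all. It therefore shows non-axiomatizability in every model of \ZF{} in which $\ell^\infty/c_0$ has no tracial state, at the cost of actually carrying out the translation.

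One correction for when you do carry it out: do not phrase the translation as uniform over $C(X)$ with $X$ zero-dimensional. In Shelah's model $\ell^\infty/c_0$ has no states, hence no characters. So it is not of the form $C(X)$, and any verification that uses point evaluations does not reach it.

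Work instead with unital abelian \cstar-algebras in which step elements $\sum_{i\le k}\lambda_i p_i$ are dense, where the $p_i$ are orthogonal projections with sum $1$. Use only the \ZF{} fact that such an element is normal with spectrum $\{\lambda_i: p_i\neq 0\}$, so its norm is $\max\{|\lambda_i|: p_i\neq 0\}$.

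The induction should then produce the following, for each formula, each assignment of finite coefficient sets to its free variables, and each $\varepsilon>0$: finitely many mutually exclusive and exhaustive Boolean formulas in the partition variables, with reals approximating the value to within $\varepsilon$ on each case. The quantifier step uses the uniform continuity moduli of formulas. It also uses that every element of the unit ball is uniformly close to a step element with coefficients in a fixed finite net of the closed unit disk.
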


\begin{Proof}
The property of having a tracial state is  axiomatizable in continuous logic by  \cite[Theorem~2.5.1]{Muenster}. This proof uses an abstract characterization of axiomatizability via ultraproducts.

We will prove that in some model of \ZF{} there are elementarily equivalent \cstar-algebras $A$ and $B$ such that $B$ has a tracial state and $A$ does not. Once proved, this will imply that having a tracial state is not axiomatizable. 
As pointed out in \ref{P.ExistenceOfStates}, (see also \ref{P.ExistenceOfStates.1}) the commutative \cstar-algebra $A=\ell^\infty(\N)/c_0(\N)$ has no states (tracial or otherwise) if all sets of reals have the Property of Baire. A model with this property in which \DC{} holds was constructed starting from a model of \ZFC{} in \cite[Corollary~7.17]{shelah1984can}. 
 Since \DC{} implies \AC$_\omega$, in this model $\Sep(A) $ is $\sigma$-complete and therefore the downwards L\"owenheim--Skolem Theorem\footnote{We would like to thank Asaf Karagila for pointing out that \DC{} is equivalent to the downwards  L\"owenheim--Skolem theorem (Boolos, see \cite[Theorem~3.4]{aspero2021dependent}).}  (\cite[Theorem~7.1.9]{farah2019combinatorial}) implies that $A$ is elementarily equivalent to some of its separable \cstar-subalgebras. Every commutative, separable \cstar-algebra~$B$ is isomorphic to $C(X)$ for a compact metrizable space $X$ (Lemma~\ref{L.S(A)}) and in particular has tracial states. 
\end{Proof}

	 On the other hand, we have the following.

\newpar{}\label{T.absoluteness.C*}
\begin{Theorem} The statement `Every unital stably finite \cstar-algebra has a tracial state' is absolute between transitive models of \ZFC{} that contain all countable ordinals. 
\end{Theorem}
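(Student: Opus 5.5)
Under \ZFC{} I will reduce the statement about arbitrary (possibly nonseparable) unital stably finite \cstar-algebras to the separable case. Then the absoluteness of Theorem~\ref{T.absoluteness.C*.separable}\eqref{1.Abs} applies (noting that a transitive model of \ZFC{} is in particular a model of \ZF). Concretely, I will show in \ZFC{} that the following are equivalent:
\begin{itemize}[label={}]
\item (a) every unital stably finite \cstar-algebra has a tracial state;
\item (b) every separable unital stably finite \cstar-algebra has a tracial state.
\end{itemize}
Since the equivalence is a theorem of \ZFC, it holds inside every transitive model of \ZFC{}. Statement (b) is $\bPi^1_2$: as in the proof of Theorem~\ref{T.absoluteness.C*.separable}, by Lemma~\ref{L.Axiomatizable} the set of codes in $\hat\Xi$ of separable unital stably finite algebras without a tracial state is $\bPi^1_1$, since unitality is Borel as well. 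So (b) is absolute by Theorem~\ref{T.Shoenfield}, and the absoluteness of (a) follows.

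The direction (a)$\Rightarrow$(b) is trivial. For (b)$\Rightarrow$(a), let $A$ be unital and stably finite, and argue in \ZFC. For each finite set $F\subseteq A$ and each $\varepsilon>0$, let $T_{F,\varepsilon}$ be the set of states $\varphi$ on $A$ with $|\varphi(xy)-\varphi(yx)|\le\varepsilon$ for all $x,y\in F$. Each $T_{F,\varepsilon}$ is weak*-closed in the compact space $\sfS(A)$, and the family is directed downward. By compactness (Banach--Alaoglu under Choice), it therefore suffices to show that each $T_{F,\varepsilon}$ is nonempty.

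To see this, pass to a separable unital \cstar-subalgebra $B\supseteq F$ of $A$ that is stably finite. Stable finiteness passes to unital subalgebras, because $M_n(B)\subseteq M_n(A)$ unitally and finiteness of the unit is inherited. By (b), $B$ has a tracial state $\tau$. Extend $\tau$ to a state $\varphi$ on $A$ by Hahn--Banach; a norm-one extension with $\varphi(1)=1$ is a state. Then $\varphi(xy)=\tau(xy)=\tau(yx)=\varphi(yx)$ for $x,y\in F$, so $\varphi\in T_{F,0}$. The intersection of all $T_{F,\varepsilon}$ is therefore nonempty, and any element of it is a tracial state on $A$. A simpler alternative is to invoke the downward L\"owenheim--Skolem reflection: in \ZFC, $\Sep(A)$ is $\sigma$-complete, so stable finiteness (which is axiomatizable, Lemma~\ref{L.Axiomatizable}) reflects to a club of separable subalgebras. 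The compactness argument above avoids needing that.

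The main obstacle is bookkeeping rather than mathematics. One must make sure the separable statement (b) is really $\bPi^1_2$, including unitality, which is an $\exists$ of an element satisfying closed conditions but is determined Borel-ly on codes. One must also be careful that the equivalence (a)$\Leftrightarrow$(b) is used inside each model, which is why the theorem is stated for models of \ZFC{} rather than \ZF: both the compactness of $\sfS(A)$ for nonseparable $A$ and the Hahn--Banach extension need Choice. This is consistent with Proposition~\ref{P.not-axiomatizable}.
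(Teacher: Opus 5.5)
Your proposal is correct and is essentially the paper's argument. In \ZFC{} you pass to separable unital subalgebras, which inherit stable finiteness, extend their tracial states to states on $A$, and use weak*-compactness of $\sfS(A)$ (you via the finite intersection property of the sets $T_{F,\varepsilon}$, the paper via a cluster point of the net $(\phi_B)_{B\in\Sep(A)}$), then apply Shoenfield's theorem to the resulting $\bPi^1_2$ statement about separable algebras, and your explicit checks that stable finiteness passes to unital subalgebras and that the separable statement (including unitality) is $\bPi^1_2$ fill in details the paper leaves implicit.
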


\begin{Proof} It suffices to prove (using AC) that if there is  a stably finite \cstar-algebra without a tracial state then there is one that is in addition separable. 
If $A$ is a unital \cstar-algebra, and every separable unital \cstar-subalgebra $B$ has a tracial state
$\tau_B$, extend $\tau_B$ to a state $\phi_B$ on $A$; then any cluster point of the net $(\phi_B)$
is a tracial state on $A$. Since being tracial is a closed condition, this cluster point is a tracial state. 
\end{Proof}

\newpar{}\label{Discussion.Absoluteness} Theorem~\ref{T.absoluteness.C*.separable} and Theorem~\ref{T.absoluteness.C*} imply that the standard set-theoretic methods such as the use of forcing or axioms  such  as the Continuum Hypothesis, or any other methods used in \cite{farah2019combinatorial},  cannot obtain independence of the statement from Theorem~\ref{T.absoluteness.C*}. This is because these methods result in transitive models  that contain all ordinals. 
However, an absolute statement may be independent from \ZF{} or even ZFC.  For example, G\"odel's sentence for \ZFC{} is absolute between transitive models of \ZF{} (or even between models of \ZF{} that contain no nonstandard integers), in spite of being independent from ZFC. The point here is that  (i)~its independence cannot be proved by standard set-theoretic methods such as passing to inner models or to forcing extensions and (ii) unless \ZFC{} is inconsistent, G\"{o}del's sentence is true. 
Since completeness of first-order logic implies that for every statement consistent with \ZFC{} there is a model of \ZFC{} in which this statement holds, this implies that for some statements all such models are ill-founded. Proving that such a concrete statement is independent from \ZFC{} would require methods so novel that such proof would be even more interesting from the point of view of set theory than its resolution would be from the point of view of the theory of \cstar-algebras. 

That said, there are a few examples of absolute, but independent, mathematical statements.  
In \cite{paris1977mathematical} a finitary consequence of Ramsey's theorem (and therefore of \ZFC) was isolated  that is expressible in the language of Peano arithmetic, but not provable in it (see also \cite{friedman1971higher}).  
For statements in analysis with similar properties  see \cite{foreman2020godel} and \cite{bausch2020undecidability}.

\bibliography{choiceless-hilbert}
\bibliographystyle{alpha}
\end{document}